\newtheorem{theorem}{Theorem}
\newtheorem{lemma}[theorem]{Lemma}
\newtheorem{definition}[theorem]{Definition}
\newtheorem{openq}{Open question}
\newtheorem*{theoremA}{Theorem A}
\newtheorem*{theoremB}{Theorem B}
\newtheorem*{theoremC}{Theorem C}
\newtheorem*{theoremD}{Theorem D}
\newtheorem*{theoremE}{Theorem E}
\newtheorem*{theoremF}{Theorem F}
\newtheorem*{definitionA}{Definition}
\begin{document}

\noindent{\Large
The algebraic and geometric classification of  \\  nilpotent binary and mono Leibniz   algebras}\footnote{
The first part of this work is supported by
FCT   UIDB/MAT/00212/2020 and UIDP/MAT/00212/2020;
 grant FZ-202009269, Ministry of higher education, science and innovations of the Republic of Uzbekistan.
The second part of this work is supported by the Russian Science Foundation under grant 22-71-10001.
}
\footnote{Corresponding author: Ivan Kaygorodov   (kaygorodov.ivan@gmail.com)}

 \bigskip

 \bigskip

\begin{center}

 {\bf
Kobiljon Abdurasulov\footnote{CMA-UBI, Universidade da Beira Interior, Covilh\~{a}, Portugal; Institute of Mathematics Academy of
Sciences of Uzbekistan, Tashkent, Uzbekistan; \ abdurasulov0505@mail.ru},
Ivan Kaygorodov\footnote{CMA-UBI, Universidade da Beira Interior, Covilh\~{a}, Portugal;   \
Moscow Center for Fundamental and Applied Mathematics, Moscow,   Russia; \
Saint Petersburg  University, Russia; \
    kaygorodov.ivan@gmail.com}
  \&
Abror Khudoyberdiyev\footnote{Institute of Mathematics Academy of
Sciences of Uzbekistan, Tashkent, Uzbekistan; \
National University of Uzbekistan, Tashkent, Uzbekistan, \ khabror@mail.ru}

}

\end{center}

 \bigskip

\

\noindent{\bf Abstract}:
{\it This paper is devoted to the complete algebraic and geometric classification of complex $5$-dimensional nilpotent binary   Leibniz and $4$-dimensional nilpotent mono Leibniz algebras.
As a corollary, we have the complete algebraic and geometric classification of complex $4$-dimensional nilpotent algebras of nil-index $3$.
 }

\medskip

\noindent {\bf Keywords}:
{\it Leibniz algebra,
binary Leibniz algebra,
mono Leibniz algebras,
nilpotent algebra,
algebraic classification,
geometric classification.}

\

 \bigskip

\noindent {\bf MSC2020}:  17A30, 17A32, 14D06, 14L30.

 \bigskip

\section*{Introduction}

The algebraic classification (up to isomorphism) of algebras of dimension $n$ from a certain variety
defined by a certain family of polynomial identities is a classic problem in the theory of non-associative algebras.
There are many results related to the algebraic classification of small-dimensional algebras in the varieties of
Jordan, Lie, Leibniz, Zinbiel, and many other algebras \cite{      DGK18,  kkl20,   GRH3, shaf,   kkp20} and references in \cite{k23,l24}.
 Geometric properties of a variety of algebras defined by a family of polynomial identities have been an object of study since 1970's (see, \cite{ wolf1,     KW01,      kkp20,       gabriel,    ckls,     GRH, GRH2,   GRH3,  fkkv,    kppv }  and references in \cite{k23}).
 Gabriel described the irreducible components of the variety of $4$-dimensional unital associative algebras~\cite{gabriel}.
 Grunewald and O'Halloran  calculated the degenerations for the variety of $5$-dimensional nilpotent Lie algebras~\cite{GRH}.
Degenerations have also been used to study a level of complexity of an algebra~\cite{wolf1}.
 The study of degenerations of algebras is very rich and closely related to deformation theory, in the sense of Gerstenhaber \cite{ger63}.

\newpage

If $\Omega$ is a variety of algebras defined by a family of polynomial identities,
then we say that an algebra  ${\rm A} \in \Omega_i$ if and only if each $i$-generated subalgebra of ${\rm A}$ gives an algebra from $\Omega.$
In particular,
if ${\rm  A} \in \Omega_1,$ then ${\rm A}$ is a mono-$\Omega$ algebra,
if ${\rm A} \in \Omega_2,$ then ${\rm A}$ is a binary-$\Omega$ algebra.
For example, let ${\rm Ass}$ be the class of associative algebras, then by Artin’s theorem, the class ${\rm Ass}_2$
coincides with the class of alternative algebras.
It follows from Albert’s Theorem that the class ${\rm Ass}_1$ coincides with
the class of power-associative algebras \cite{albert}.
It is easy to see that ${\rm  Lie}_1$ coincides with anticommutative algebras,
i.e., they satisfy the identity $x^2=0$ and the identities of ${\rm Lie}_2$ are described by Gainov \cite{Idbilie}.
The main non-trivial example of binary Lie algebras is the class of Malcev algebras, defined by Malcev in \cite{malcev55}.
The algebraic theory of binary Lie algebras was developed in some papers by Kuzmin, Filippov, and Grishkov
(see, for example, \cite{k67,G81,F91} and references therein).
So,
Kuzmin proved Engel's theorem for binary Lie algebras \cite{k67}.
Grishkov   described  complex semisimple finite-dimensional binary-Lie algebras \cite{G81}
Filippov characterized prime binary-Lie algebras \cite{F91}.
Umirbaev proved that the variety of complex metabelian binary-Lie algebras  is Spechtian (i.e., every subvariety of it has a finite basis of identities) in \cite{u84}.
Chupina proved that two complex finite-dimensional semisimple binary Lie algebras are isomorphic if their lattices of subalgebras are  isomorphic \cite{C91}.
The question of specialty of binary-Lie algebras is considered in \cite{as11}.
 Arenas and  Arenas-Carmona studied
the universal Poisson envelope for binary-Lie algebras in \cite{aa13}.
On the other hand, the theory of binary $(-1,1)$-algebras is also under intensive consideration
(the variety of $(-1,1)$-algebras is also known as Lie-admissible right alternative algebras).
So,
the identities of binary $(-1,1)$-algebras are described by Kleinfeld, Smith and Pchelintsev in \cite{p76,ks75}.
Later they developed the theory of binary $(-1,1)$-algebras. For example,
 Pchelintsev proved that  if a complex binary $(-1,1)$-algebra  is a nil algebra of bounded index, then it is locally nilpotent \cite{p91} and described
 irreducible binary $(-1,1)$-bimodules over simple finite-dimensional algebras \cite{p06}.
Hentzel and  Smith proved that each complex simple binary $(-1,1)$ nil algebra is associative \cite{HS86}.
Recently,
defining identities for  mono and binary Zinbiel algebras are described in \cite{ims21}
and defining identities for mono and binary Leibniz algebras are described in \cite{binleib,
g10}.
On the other hand, the
defining identities for mono  symmetric Zinbiel   and mono symmetric Leibniz algebras coincide with nil-algebras of nil-index 3
\cite{bkm22}.

An algebra ${\bf A}$ is called a  Leibniz  algebra  if it satisfies the identity
\begin{center}
    $(xy)z=(xz)y+x(yz).$
\end{center}
Leibniz algebras present a "non antisymmetric" generalization of Lie algebras.
It appeared in some papers of Bloh [in the 1960s] 
and Loday  [in 1990s].
Recently, they appeared in many geometric and physics applications (see, for example,
\cite{       ms22,    m22, STZ21,  aks,  strow20} and references therein).
A systematic study of algebraic properties of Leibniz algebras is started from the Loday paper.
So, several classical theorems from Lie algebras theory have been extended
to the Leibniz algebras case;
many classification results regarding nilpotent, solvable, simple, and semisimple Leibniz algebras
are obtained
(see, for example, \cite{ck13,    CKLO13, akyu, aors21,   kppv,  m22, kky22, T21, STZ21} and references therein).
Leibniz algebras is a particular case of terminal algebras and, on the other hand,
symmetric Leibniz algebras are Poisson admissible algebras. In the present paper, based on a known classification  of $5$-dimensional nilpotent Leibniz algebras \cite{leib5}, we give the algebraic  and geometric classification of
complex $5$-dimensional nilpotent  binary Leibniz and complex $4$-dimensional mono  Leibniz algebras.

\newpage
Our method for classifying nilpotent  Leibniz algebras is based on the calculation of central extensions of nilpotent algebras of smaller dimensions from the same variety.
The algebraic study of central extensions of   algebras has been an important topic for years \cite{  hac16,  ss78}.
First, Skjelbred and Sund used central extensions of Lie algebras to obtain a classification of nilpotent Lie algebras  \cite{ss78}.
Note that the Skjelbred-Sund method of central extensions is an important tool in the classification of nilpotent algebras.
Using the same method,
 $4$-dimensional nilpotent
(bicommutative, commutative, terminal,   and so on) algebras,
 $5$-dimensional nilpotent
(Zinbiel,
symmetric Leibniz  and so on) algebras,
 $6$-dimensional nilpotent
(anticommutative, binary Lie \cite{ack}, Lie, Tortkara and so on) algebras,
 $8$-dimensional   dual Mock-Lie algebras \cite{ckls},
and some others have been described. Our main results related to the algebraic classification of cited varieties are summarized below.

\begin{theoremA}
Up to isomorphism, there are infinitely many isomorphism classes of complex $5$-dimensional nilpotent (non-Leibniz) binary Leibniz algebras,
described explicitly in  Section \ref{secteoA}
in terms of $1$ one-parameter family and $13$ additional isomorphism classes.
\end{theoremA}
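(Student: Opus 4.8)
The plan is to apply the Skjelbred--Sund method of central extensions, adapted to the variety $\mathrm{Leib}_2$ of binary Leibniz algebras. First I would recall from \cite{binleib,g10} the polynomial identities (of degrees three and four) that define $\mathrm{Leib}_2$, and set up the corresponding cohomology-type data: for a binary Leibniz algebra $\A$ and the trivial one-dimensional module $\CC$, the space $\mathrm{Z}^{2}(\A,\CC)$ of bilinear forms $\0\colon\A\times\A\to\CC$ for which $\A\oplus\CC\0$, with the natural multiplication, again lies in $\mathrm{Leib}_2$; the subspace $\mathrm{B}^{2}(\A,\CC)$ of coboundaries; and the quotient $\mathrm{H}^{2}(\A,\CC)=\mathrm{Z}^{2}(\A,\CC)/\mathrm{B}^{2}(\A,\CC)$. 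As usual, every $5$-dimensional nilpotent binary Leibniz algebra whose annihilator has dimension $s\ge1$ is isomorphic to a central extension of a $(5-s)$-dimensional nilpotent binary Leibniz algebra $\A$ by $\CC^{s}$, given by an $s$-tuple of cocycles whose common radical intersects $\mathrm{Ann}(\A)$ trivially; and two such extensions are isomorphic exactly when the corresponding $s$-dimensional subspaces of $\mathrm{H}^{2}(\A,\CC)$ lie in one orbit of the induced action of $\aut{\A}$. Since the whole construction stays inside $\mathrm{Leib}_2$, it is enough to run it there.

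Next I would build the ladder of low-dimensional nilpotent binary Leibniz algebras. In dimensions $1$, $2$ and $3$ every nilpotent binary Leibniz algebra turns out to be a Leibniz algebra, so these lists can be read off from the known classification of nilpotent Leibniz algebras; running the extension procedure one dimension at a time then also produces the $4$-dimensional nilpotent binary Leibniz algebras, and for each algebra obtained along the way I record its annihilator, since only those with nonzero annihilator serve as bases of proper central extensions up to dimension $5$.

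For each such base $\A$ of dimension at most $4$ I would then: (i) compute $\aut{\A}$; (ii) solve the linear system cutting $\mathrm{Z}^{2}(\A,\CC)$ out of the binary Leibniz identities, compute $\mathrm{B}^{2}(\A,\CC)$, and hence $\mathrm{H}^{2}(\A,\CC)$; (iii) determine the orbits of $\aut{\A}$ on the subspaces $W\le\mathrm{H}^{2}(\A,\CC)$ of the relevant dimension whose representatives have common radical meeting $\mathrm{Ann}(\A)$ trivially, which enumerates the central extensions; (iv) discard the extensions that are in fact Leibniz (these already appear in the classification of \cite{leib5}) and those that split off a one-dimensional ideal with zero product, and collect the remaining algebras. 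The one-parameter family in the statement appears at the last step, as a positive-dimensional orbit space --- a projective line with finitely many points removed --- over one of the four-dimensional bases.

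The main obstacle is twofold. On the computational side, the defining identities of $\mathrm{Leib}_2$ are of degree four and much bulkier than the Leibniz identity, so the linear systems for $\mathrm{Z}^{2}$ and, especially, the nonlinear computations of the $\aut{\A}$-orbits are laborious; separating, among the extensions of the four-dimensional bases, those that give genuinely new non-Leibniz algebras from those that are Leibniz or decomposable is the delicate part. On the structural side, one must also establish completeness --- that every $5$-dimensional nilpotent non-Leibniz binary Leibniz algebra is produced by some base--cocycle pair in the ladder --- and irredundancy of the final list of one family plus $13$ algebras, which is checked by comparing isomorphism invariants such as the dimensions of the terms of the lower central series, of the annihilator, and of the square of the algebra.
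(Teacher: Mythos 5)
Your proposal follows essentially the same route as the paper: the Skjelbred--Sund central-extension machinery adapted to the binary Leibniz identities, the observation that all nilpotent binary Leibniz algebras of dimension below five are Leibniz (so the bases are taken from the known nilpotent Leibniz classification), and the orbit computation under $\operatorname{Aut}$ on the non-Leibniz part of the second cohomology, discarding split and Leibniz extensions. The paper merely optimizes what you describe by noting that non-Leibniz binary Leibniz algebras are at least three-generated, which reduces everything to one-dimensional extensions of the three-generated four-dimensional bases and ultimately to a single base algebra $\mathfrak{N}_{03}$, but this is a refinement of your plan rather than a different argument.
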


\begin{theoremB}
Up to isomorphism, there are infinitely many  complex $4$-dimensional nilpotent (non-binary Leibniz) mono Leibniz algebras,
described explicitly in  Section \ref{secteoB}
in terms of $10$ one-parameter families and $12$ additional isomorphism classes.

\end{theoremB}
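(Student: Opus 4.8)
The plan is to carry out the Skjelbred--Sund procedure of central extensions within the variety of mono Leibniz algebras. The first step is to record, from \cite{binleib, g10}, the explicit family of polynomial identities that cuts out mono Leibniz algebras inside the variety of all algebras, and to check that this variety is stable under central extensions; this guarantees that every $4$-dimensional nilpotent mono Leibniz algebra, having non-zero annihilator, is a central extension $\mathbf{A} = \mathbf{A}' \oplus \mathbb{C}s$ of a nilpotent mono Leibniz algebra $\mathbf{A}'$ with $\dim \mathbf{A}' \le 3$, determined by a mono Leibniz $2$-cocycle $\theta \in Z^2(\mathbf{A}', \mathbb{C})$ whose radical is disjoint from the annihilator of $\mathbf{A}'$. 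So the problem reduces to: (i) classifying nilpotent mono Leibniz algebras of dimensions $1$, $2$ and $3$; and (ii) for each of them, enumerating the relevant central extensions up to isomorphism.

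Step (i) is easy in dimensions $1$ and $2$, and in dimension $3$ the list can be obtained either directly from the mono Leibniz identities or by filtering a known classification of low-dimensional nilpotent Leibniz-type algebras; at each dimension one marks which algebras are already binary Leibniz and which are strictly mono, since this bookkeeping is needed at the end. For step (ii), given $\mathbf{A}'$ from the list, I would compute $Z^2(\mathbf{A}', \mathbb{C})$ and $B^2(\mathbf{A}', \mathbb{C})$, hence $H^2(\mathbf{A}', \mathbb{C})$, work out the induced action of $\mathrm{Aut}(\mathbf{A}')$ on $H^2(\mathbf{A}', \mathbb{C})$, and describe a set of representatives of the orbits of $s$-dimensional subspaces of $H^2(\mathbf{A}', \mathbb{C})$, with $s = 4 - \dim \mathbf{A}'$, subject to the condition that the representing cocycle has trivial intersection of its radical with the annihilator. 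Each orbit representative yields one $4$-dimensional nilpotent mono Leibniz algebra; one then discards those that happen to satisfy the binary Leibniz identity (these belong to the setting of Theorem A, not here) and those that can already be produced from a smaller $\mathbf{A}'$, detected by the dimension of the annihilator, which avoids repetitions.

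The heavy and delicate part of the argument --- and the main obstacle --- is the orbit analysis when $\mathrm{Aut}(\mathbf{A}')$ acts on the relevant Grassmannian with infinitely many orbits: this is precisely what produces the $10$ one-parameter families in the statement. For each such $\mathbf{A}'$ one must fix careful normal forms for the parametrized cocycles, compute the residual stabilizer of each normal form, and then solve the resulting systems of polynomial equations to determine exactly when two values of a parameter give isomorphic algebras, and to rule out accidental coincidences between the families and the $12$ sporadic classes. Finally I would verify the completeness and non-redundancy of the resulting list by checking that every algebra on it is $4$-dimensional, nilpotent, satisfies the mono Leibniz identities but fails the binary Leibniz identity, and that no two of them are isomorphic; the last is done with discrete invariants (dimensions of the terms of the lower central series, of the annihilator, of the square $\mathbf{A}^2$, and the ranks and congruence types of the natural bilinear and trilinear forms attached to the multiplication), supplemented by explicit computations of the isomorphism equations in the few cases where these invariants fail to separate the algebras.
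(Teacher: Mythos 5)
Your proposal follows the same route as the paper: the Skjelbred--Sund method of central extensions adapted to the mono Leibniz cocycle identities, with the orbit analysis of $\operatorname{Aut}(\mathbf{A}')$ on $H^2(\mathbf{A}',\mathbb C)$ and the filtering of cocycles that would yield binary Leibniz algebras (the paper does this via the decomposition $H^2_{\rm ML}=H^2_{\rm BL}\oplus(\cdot)$ rather than by discarding at the end, and it shortcuts your dimension-$1$ and $2$ base cases by noting that a non-Leibniz nilpotent mono Leibniz algebra must be $2$-generated with $\mathfrak L^3\neq 0$, so only $1$-dimensional extensions of the four $3$-dimensional $2$-generated $2$-step nilpotent algebras occur). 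These are minor organizational differences; the approach is essentially identical.
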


\begin{theoremC}
Up to isomorphism, there are infinitely many  complex $4$-dimensional nilpotent (non-$2$-step nilpotent) algebras of nil-index $3$,
described explicitly in  Section \ref{secteoC}
in terms of $4$ one-parameter families and $11$ additional isomorphism classes.

\end{theoremC}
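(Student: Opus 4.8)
The plan is to deduce Theorem C from Theorem B by realising the nil-index $3$ algebras as a subvariety of the mono Leibniz algebras. First I would note that every algebra $\mathrm{A}$ of nil-index $3$ is a mono Leibniz algebra: for $x\in\mathrm{A}$ the subalgebra $\langle x\rangle$ is spanned by $x$ and $x^2=xx$, all products of three of its elements vanish, and a two-dimensional algebra with vanishing triple products satisfies $(ab)c=(ac)b+a(bc)$ trivially (it is in fact $2$-step nilpotent, hence symmetric Leibniz, which matches the description of \cite{bkm22}). Hence the variety of nil-index $3$ algebras lies inside the variety of mono Leibniz algebras; since every binary Leibniz algebra is also mono Leibniz, every complex $4$-dimensional nilpotent algebra of nil-index $3$ occurs in the combined list formed by the algebras of Theorem B together with the complex $4$-dimensional nilpotent binary Leibniz algebras --- the latter being produced, along the way, by the central-extension procedure behind Theorem A.

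Second, I would run a selection through this combined list: for each algebra and each one-parameter family, impose the nil-index $3$ conditions $(xx)x=0$ and $x(xx)=0$ on a generic element $x=\sum_i\alpha_i e_i$. Over $\mathbb{C}$ these polynomial identities are equivalent to their multilinear polarisations, and for a family they translate into polynomial equations in the parameter that carve out its nil-index $3$ sublocus. After discarding the $2$-step nilpotent algebras --- those in which all products of three basis vectors vanish --- the survivors are precisely the algebras of Section~\ref{secteoC}. The pairwise non-isomorphism of the resulting $4$ one-parameter families and $11$ additional algebras, and the identification of coincidences between specialisations of different families, are inherited from the corresponding statements already proved for Theorem B.

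The hard part will be completeness, on two fronts. First, one must be sure that the combined list really exhausts all $4$-dimensional nilpotent mono Leibniz algebras; this means merging the ``non-binary Leibniz'' families of Theorem B with the binary Leibniz (and, inside it, Leibniz) strata and keeping track of the degenerate parameter values at which these strata meet. Second, the nil-index $3$ test has to be carried out uniformly along each one-parameter family, because that is exactly where the final count is determined: a single family from Theorem B may contribute an entire subfamily, just one specialisation, or nothing to Theorem C. Everything else --- checking that each chosen normal form is genuinely of nil-index $3$ and not merely $2$-step nilpotent, and that no two entries of the final list are isomorphic --- is routine, if lengthy, bookkeeping with the identity $x^3=0$.
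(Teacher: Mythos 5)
Your proposal is correct and follows essentially the same route as the paper: the paper likewise observes (citing the characterisation of nil-index-$3$ algebras as the intersection of left and right mono Leibniz algebras) that every such algebra appears in the list of Theorem B together with the $4$-dimensional nilpotent Leibniz algebras, and then filters that combined list by the two linearised identities $\sum_{\sigma}(x_{\sigma(1)}x_{\sigma(2)})x_{\sigma(3)}=0$ and $\sum_{\sigma}x_{\sigma(1)}(x_{\sigma(2)}x_{\sigma(3)})=0$, discarding the $2$-step nilpotent algebras. The only cosmetic difference is that you argue the inclusion into mono Leibniz algebras directly (via the two-dimensionality of one-generated subalgebras) where the paper cites this fact.
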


 The degenerations between the (finite-dimensional) algebras from a certain variety $\mathfrak{V}$ defined by a set of identities have been actively studied in the past decade.
The description of all degenerations allows one to find the so-called rigid algebras and families of algebras, i.e. those whose orbit closures under the action of the general linear group form irreducible components of $\mathfrak{V}$
(with respect to the Zariski topology).
We list here some works in which the rigid algebras of the varieties of
all $4$-dimensional nilpotent commutative algebras,
all $6$-dimensional nilpotent anticommutative algebras,
all $8$-dimensional dual Mock Lie algebras \cite{ckls}
have been found.
A full description of degenerations has been obtained
for $2$-dimensional algebras,
for $3$-dimensional anticommutative algebras,
for $3$-dimensional Leibniz algebras,
for $4$-dimensional Zinbiel and  $4$-dimensional nilpotent Leibniz algebras in \cite{kppv},
for $6$-dimensional nilpotent Lie algebras in \cite{GRH},
for $8$-dimensional $2$-step nilpotent anticommutative algebras
and so on.
Our main results related to the geometric classification of cited varieties are summarized below.

\begin{theoremD}
The variety of complex $5$-dimensional nilpotent  binary Leibniz algebras has
dimension {\it 24 }  and it has
10  irreducible components
(in particular, there is only one rigid algebra in this variety).
\end{theoremD}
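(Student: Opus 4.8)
The plan is to combine the algebraic classification of Theorem~A with the known list of $5$-dimensional nilpotent Leibniz algebras from \cite{leib5}, and then to determine the full degeneration picture inside the resulting variety, which we denote $\mathcal{N}$. Recall that $\mathcal{N}$ sits inside the affine space of structure constants on a fixed $5$-dimensional complex vector space, that $\operatorname{GL}_5(\CC)$ acts on it by change of basis, and that for any algebra $\mathbf{A}$ one has $\dim\operatorname{Orb}(\mathbf{A})=25-\dim\operatorname{Der}(\mathbf{A})$. The first step is therefore bookkeeping: for every algebra and every one-parameter family appearing in Theorem~A and in \cite{leib5} we compute $\dim\operatorname{Der}$, hence the dimension of the orbit; for a genuine one-parameter family $\mathbf{A}(t)$ the set $\overline{\bigcup_{t}\operatorname{Orb}(\mathbf{A}(t))}$ is irreducible, being the closure of the image of an irreducible variety, and has dimension one larger than the generic orbit inside the family, unless the family is ``fake'' (its generic members pairwise isomorphic).

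The second step is to establish all degenerations. For each ordered pair $(\mathbf{A},\mathbf{B})$ that we expect to be comparable we produce an explicit parametrized basis $e_i^t=\sum_j c_{ij}(t)\,e_j$, with $c_{ij}(t)$ rational in $t$ and $(c_{ij}(t))$ invertible for generic $t$, such that the structure constants of $\mathbf{A}$ expressed in this basis tend, as $t\to 0$, to those of $\mathbf{B}$; this certifies $\mathbf{A}\to\mathbf{B}$. Conversely, for each pair that we claim does \emph{not} degenerate we must exhibit an obstruction. Here we use the standard arsenal of degeneration invariants: semicontinuity of $\dim\operatorname{Der}$ and of $\dim\operatorname{Orb}$ under specialization, the dimensions of the terms of the descending central series, $\dim\operatorname{Ann}(\mathbf{A})$, $\dim(\mathbf{A}^2\cap\operatorname{Ann}(\mathbf{A}))$, the property of satisfying an extra polynomial identity (being Lie, $2$-step nilpotent, symmetric Leibniz, commutative, and so on), and, when these coarse invariants do not suffice, the finer $\operatorname{Hom}$-type closed conditions in the spirit of Burde--Steinhoff and Grunewald--O'Halloran that bound $\dim\operatorname{Orb}$ of a target from above along a prescribed one-parameter subgroup.

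The third and most delicate step concerns the one-parameter families: a family may degenerate to an algebra only through a simultaneous limit in its internal parameter, and, conversely, the orbit closure of a single algebra may or may not meet a given family. We must decide, for each family, whether its (irreducible) orbit closure is contained in the orbit closure of another algebra or family; the families whose closures are maximal contribute irreducible components. After carrying out this analysis and pruning redundant closures, exactly $10$ maximal orbit closures remain, and these are the irreducible components of $\mathcal{N}$. Among them exactly one is the closure of a single $\operatorname{GL}_5(\CC)$-orbit, which forces that algebra to be rigid and to be the only rigid algebra in the variety, while the remaining nine are closures of parametric families. Finally, $\dim\mathcal{N}$ equals the maximum of the dimensions of its components; a direct comparison of these dimensions shows the maximum to be $24$, attained for instance by the orbit closure of the rigid algebra, whose derivation algebra is $1$-dimensional.

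The main obstacle will be the combinatorial size of the problem: the variety contains a long list of algebras together with several one-parameter families, so reconstructing the degeneration poset requires a large number of explicit degenerations and, more painfully, a matching set of non-degeneration obstructions, several of which will not follow from the coarse invariants and will each need an ad hoc dimension estimate for a specific orbit. Keeping the treatment of the parametric families rigorous --- distinguishing genuine degenerations of a family from degenerations of its individual members, and verifying the irreducibility and the dimension of each family's closure --- is where the real subtlety lies.
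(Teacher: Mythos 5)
Your overall framework (orbit dimensions via $\dim\operatorname{Orb}(\mathbf{A})=25-\dim\operatorname{Der}(\mathbf{A})$, explicit parametrized bases for degenerations, closed invariant subsets for non-degenerations, careful treatment of parametric families) is the standard one and is compatible with the theorem, but the paper's actual route is considerably more economical than your ``rebuild the whole degeneration poset'' plan. The paper takes as input the known fact from \cite{leib5} that the subvariety of $5$-dimensional nilpotent \emph{Leibniz} algebras already has exactly $10$ irreducible components; it then only has to (i) show that every new non-Leibniz algebra $\mathbf{B}_{01},\dots,\mathbf{B}_{14}$ of Theorem~A lies in the closure of the single family $\mathbf{B}_{09}^{\alpha}$, (ii) show that exactly one of the old Leibniz components, namely $\overline{\mathcal{O}(\mathbb{S}_{22}^{\alpha})}$, is swallowed by $\overline{\mathcal{O}(\mathbf{B}_{09}^{\alpha})}$, and (iii) verify via one closed $\mathfrak{B}$-stable set that $\mathbf{B}_{09}^{\alpha}$ does not dominate the remaining candidates ($\mathfrak{V}_{4+1}$, $\mathbb{S}_{41}^{\alpha}$, $\mathbb{S}_{21}^{\alpha,\beta}$). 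This replaces the full pairwise analysis you envisage by a handful of targeted checks; you should restructure your plan around this reduction, since otherwise the combinatorial load you yourself identify as the main obstacle is far larger than necessary. Note also that several of the relevant families carry two or more parameters, not one, so your dimension count ``one larger than the generic orbit'' must be adjusted accordingly.

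There is, however, one concrete error in your final step. You assert that the dimension $24$ is ``attained for instance by the orbit closure of the rigid algebra, whose derivation algebra is $1$-dimensional.'' The unique rigid algebra is the one-generated (null-filiform) algebra $\mathbb{L}_{82}$: $e_1e_1=e_2$, $e_2e_1=e_3$, $e_3e_1=e_4$, $e_4e_1=e_5$. Its derivation algebra has dimension $5$, so $\dim\mathcal{O}(\mathbb{L}_{82})=20$, and its orbit closure is a $20$-dimensional component. Rigidity only means that the orbit closure is an irreducible component; it does not make that component of maximal dimension. The dimension $24$ of the variety is attained by a different component, $\overline{\mathcal{O}(\mathfrak{V}_{3+2})}$, which is the closure of a multi-parameter family and contains no rigid algebra at all. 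As written, your argument would report the wrong witnesses for both assertions of the theorem (and, if you genuinely believed $\dim\operatorname{Der}(\mathbb{L}_{82})=1$, your bookkeeping table would be off by $4$ in at least one entry), so this step needs to be redone: compute the closure dimension of each of the $10$ components separately and take the maximum, and identify the rigid algebra by the separate argument that no algebra with more than one generator can degenerate to a one-generated one.
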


\begin{theoremE}
The variety of complex $4$-dimensional nilpotent  algebras of nil-index $3$ has
dimension {\it 15}  and it has
2  irreducible components
(in particular, there are no rigid algebras in this variety).
\end{theoremE}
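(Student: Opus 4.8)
The plan is to proceed exactly along the lines of the Skjelbred--Sund philosophy combined with the classification underlying Theorem C. First I would observe that every $4$-dimensional nilpotent algebra of nil-index $3$ arises (by the central extension technology used to prove Theorem C) from $2$-step nilpotent algebras of dimension $\leq 3$, so in practice the whole variety sits inside a union of orbit closures of a controlled finite list of algebras and one-parameter families, namely the $4$ families and $11$ rigid-looking algebras exhibited in Section \ref{secteoC}. The first concrete step is therefore to compute, for each of these algebras and families, the dimension of the orbit under the $\mathrm{GL}_4(\CC)$-action; this is $16 - \dim \aut(\mathrm{A})$ for a single algebra, and one more than the generic orbit dimension for a one-parameter family. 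Taking the maximum over the list gives a lower bound for $\dim \mathfrak{V}$, and I expect the value $15$ to be attained on (the closure of) one of the one-parameter families.

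Next I would establish the upper bound $\dim \mathfrak{V} \leq 15$. The clean way is to exhibit a single algebra (or a family) whose orbit closure already contains all the others, i.e. to show that the variety is irreducible-modulo-one-extra-component and that the biggest piece has dimension exactly $15$; concretely one writes the structure constants of a generic algebra of nil-index $3$ as a point of an affine space cut out by the multilinear identities (anticommutativity is \emph{not} imposed, only $x^2x = 0$ type relations together with $2$-step-nilpotency failing), and bounds the dimension of this affine scheme. Because the classification in Theorem C is finite-up-to-parameters, this bound is really a finite check: for each normal form one verifies that no nearby algebra escapes the list, using the standard fact that $\overline{O(\mathrm{A})} \subseteq \bigcup_{\mathrm{B}} \overline{O(\mathrm{B})}$ over the finitely many normal forms, and that each $\overline{O(\mathrm{B})}$ has dimension $\leq 15$ by the orbit-dimension computation above.

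For the count of irreducible components I would build the degeneration graph on the $4$ families and $11$ algebras. The positive direction — exhibiting explicit degenerations $\mathrm{A} \to \mathrm{B}$ via parametrised bases $g_t \in \mathrm{GL}_4(\CC[t,t^{-1}])$ — is routine once the normal forms are in hand; the delicate direction is proving \emph{non}-degeneration, for which I would use the usual closed-condition invariants (dimensions of $\mathrm{A}^2$, of the annihilator, of $\mathrm{Ann}(\mathrm{A}) \cap \mathrm{A}^2$, of the space of derivations, behaviour of $\mathrm{ad}$-operators, and the level of the algebra) to separate orbit closures. The outcome I anticipate is that all algebras except those on one exceptional family degenerate into the closure of one maximal family, while that exceptional family forms a second, disjoint maximal piece — giving exactly $2$ irreducible components. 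Since each of the two maximal families is a genuine one-parameter family (no single generic algebra dominates it), neither component is the closure of a single orbit, which is precisely the assertion that there are no rigid algebras.

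The main obstacle I expect is the non-degeneration bookkeeping: with $15$ normal forms (some in families) one must rule out a quadratic number of potential degenerations, and the cheap discrete invariants will not always suffice — in the borderline cases I would fall back on the technique of semicontinuity of $\dim \mathrm{Der}$ together with the ``$\mathfrak{CD}$''/lower-central-series refinements and, where even that fails, on an explicit argument that a hypothetical degeneration would force an impossible limit of structure constants (the standard $\mathrm{Hom}$-argument: if $\mathrm{A}\to\mathrm{B}$ then $\mathrm{B}$ lies in the closure of the cone of algebras with a fixed lower bound on $\dim\mathrm{Ann}$, contradiction). Assembling these pieces, together with the dimension computation, yields both the dimension $15$ and the two-component statement, and the absence of rigid algebras follows formally since both components are dominated by honest $1$-parameter families rather than by a single point of the variety.
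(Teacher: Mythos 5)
Your overall strategy (orbit-dimension bounds, a degeneration graph on normal forms, closed invariant sets for non-degenerations, and the formal deduction that components dominated by genuine one-parameter families contain no rigid algebra) is the same machinery the paper uses. But there is a concrete gap in your setup: you restrict the analysis to the $4$ families and $11$ algebras of Section \ref{secteoC} and assert that ``the whole variety sits inside a union of orbit closures'' of that finite list. Theorem C classifies only the \emph{non-$2$-step-nilpotent} algebras of nil-index $3$; every $2$-step nilpotent algebra also has nil-index $3$ and therefore belongs to the variety, and this locus is \emph{not} contained in the union of the orbit closures of the Theorem C normal forms. In fact one of the two irreducible components, $\mathcal{C}_1=\overline{\{\mathcal{O}(\mathfrak{N}_2(\gamma))\}}$, consists entirely of $2$-step nilpotent algebras. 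Omitting this locus makes both the dimension count and the component count unreachable by your argument.

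The paper closes this gap by importing the known geometric classification of $4$-dimensional $2$-step nilpotent algebras (two components, given by the families $\mathfrak{N}_2(\gamma)$ and $\mathfrak{N}_3(\alpha)$), then proving $\mathbb{M}_{12}^{\alpha}\to\mathfrak{N}_3(\alpha)$ (so that component is absorbed) and $\mathbb{M}_{12}^{\alpha}\not\to\mathfrak{N}_2(\gamma)$ via a Borel-stable closed set $\mathcal R$ (so that component survives). A second, smaller inaccuracy: your anticipated picture of \emph{two} maximal families among the Theorem C normal forms is wrong; within that list the single family $\mathbb{M}_{12}^{\alpha}$ dominates everything (through chains such as $\mathbb{M}_{12}\to\mathbb{M}_{06}\to\mathfrak{L}_{11}\to\mathfrak{L}_{01},\mathfrak{L}_{09},\mathfrak{L}_{10},\mathfrak{L}_{12}$ and $\mathbb{M}_{12}\to\mathbb{M}_{11}\to\mathbb{M}_{07},\dots$), giving $\dim\mathcal{O}(\mathbb{M}_{12}^{\alpha})=15$; the ``exceptional'' second component is the $2$-step nilpotent family you left out, not a member of the Theorem C list.
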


\begin{theoremF}
The variety of complex $4$-dimensional nilpotent mono Leibniz algebras has
dimension {\it 15}  and it has
3  irreducible components
(in particular, there is only one rigid algebra in this variety).
\end{theoremF}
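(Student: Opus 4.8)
The plan is to run the standard geometric degeneration machinery (as in \cite{GRH,kppv}). Regard the variety $\mathcal{N}$ of $4$-dimensional nilpotent mono Leibniz algebras as a Zariski-closed, $\mathrm{GL}_4(\mathbb{C})$-invariant subset of the affine space $\mathbb{C}^{64}$ of structure constants, cut out by nilpotency together with the mono Leibniz identities of \cite{binleib,g10}; isomorphism classes are the $\mathrm{GL}_4$-orbits, a degeneration $A\to B$ means $B\in\overline{\mathrm{GL}_4\cdot A}$, and the irreducible components are exactly the orbit closures that are maximal under inclusion. First I would assemble the complete list of algebras in $\mathcal{N}$: it is the union of the known $4$-dimensional nilpotent (binary) Leibniz algebras with the $4$-dimensional nilpotent mono (non-binary) Leibniz algebras produced in Theorem B. For bookkeeping I would also record that every $4$-dimensional nil-index-$3$ algebra is mono Leibniz, so the variety of Theorem E is a closed subset of $\mathcal{N}$.

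Next I would compute, for every algebra and for a generic member of each of the $10$ one-parameter families of Theorem B, the orbit dimension $\dim\mathrm{Orb}(A)=16-\dim\mathrm{Der}(A)$; for a genuine one-parameter family $\{A_t\}$ (pairwise non-isomorphic, constant generic derivation dimension) the closure $\overline{\bigcup_t\mathrm{Orb}(A_t)}$ is irreducible of dimension $1+\max_t\dim\mathrm{Orb}(A_t)$. Comparing these numbers isolates a short list of "large" orbit closures; I expect precisely three of them to be mutually incomparable and to dominate everything else, with one of the three being the closure of the single orbit of a rigid algebra (this accounts for the "only one rigid algebra" clause) and with $15$ being the maximum of the three dimensions. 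Irreducibility of each candidate component is automatic, being the closure of the image of an irreducible variety ($\mathrm{GL}_4$, or $\mathrm{GL}_4\times\mathbb{A}^1$) under a morphism.

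It then remains to prove two things. That the three closures cover $\mathcal{N}$: for every remaining algebra $B$ I would exhibit a primary degeneration $A\to B$ from one of the three candidates by writing down an explicit parametrized change of basis $g_t\in\mathrm{GL}_4(\mathbb{C}(t))$ with $\lim_{t\to 0} g_t * A = B$, these reducing by transitivity to a small generating set of degenerations. That the three are genuine components, i.e. pairwise non-degenerate and none contained in another: I would separate them by $\mathrm{GL}_4$-invariant closed conditions, using the standard battery ($\dim\mathrm{Der}$, $\dim A^2$, $\dim A^3$, $\dim\mathrm{Ann}(A)$, $\dim\{x:\ xA=0\}$, membership in the subvarieties defined by the extra identities of being Leibniz, binary Leibniz, symmetric, or of nil-index $3$, and ranks of the associated bilinear and trilinear maps) together with the finer closed-set arguments of \cite{GRH,kppv} (sets $\mathcal{R}$ with $A\in\mathcal{R}\Rightarrow\overline{\mathrm{GL}_4\cdot A}\subseteq\mathcal{R}$, carved out by equations on the structure constants) wherever the coarse invariants fail. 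At this stage I would also cross-check against Theorem E: since the nil-index-$3$ variety is closed in $\mathcal{N}$ with its own two components, each of those is contained in a component of $\mathcal{N}$, pinning down two of the three.

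The main obstacle is not the individual linear-algebra computations but (i) handling the ten one-parameter families of Theorem B, where one must treat the generic member, detect which special parameter values coincide with other orbits, and settle all degenerations into, out of, and between these families, since the closure of a family can absorb orbits invisible from any single member; and (ii) proving the handful of non-degenerations among the three top closures for which none of the cheap invariants suffices, for which the bespoke closed-set arguments have to be engineered by hand. Once these are established, counting maximal orbit closures yields exactly three components, the largest dimension is read off as $15$, and the unique maximal orbit that is not the generic point of a family gives the single rigid algebra.
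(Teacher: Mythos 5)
Your plan follows essentially the same route as the paper: list the algebras from Theorem B together with the known components of the $4$-dimensional nilpotent Leibniz variety, compare orbit-closure dimensions (the paper finds $\dim\mathcal{O}(\mathfrak{L}_2)=12$ and $\dim\mathcal{O}(\mathbb{M}_{12}^{\alpha})=\dim\mathcal{O}(\mathbb{M}_{14}^{\alpha})=15$), exhibit explicit parametrized bases for the covering degenerations, and certify the three maximal closures — with the rigid one, $\mathfrak{L}_2$, singled out exactly as you anticipate, via the invariant that one-generatedness is inherited under degeneration. The only caveat is that your text is a strategy rather than a completed argument (no explicit $g_t$, derivation dimensions, or separating closed sets are produced), but the strategy is the correct one and matches the paper's proof.
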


\newpage

\section{The algebraic classification of nilpotent binary and mono  Leibniz algebras}

\subsection{Preliminaries and basic definitions}
Further we use the notation
\begin{longtable}{lcl}
${\mathcal L}(x, y, z)$&$=$&$ x(yz) - (xy)z + (xz)y,$\\
${\mathcal L_{\theta}}(x, y, z)$ & $=$&$\theta(x,yz)-\theta(xy,z)+\theta(xz,y),$
\end{longtable}
where $\theta  \colon {\bf A} \times {\bf A} \longrightarrow {\mathbb V},$ ${\mathbb V}$ is a vector space over ${\mathbb C}$, and $\bf A$ is an algebra over  $\mathbb{C}$.

\begin{definitionA}
A  complex vector space  is called a  Leibniz algebra if it satisfies ${\mathcal L}(x,y,z)=0.$
A  complex vector space  is called a  binary Leibniz algebra if every two-generated subalgebra is a Leibniz algebra.
A  complex vector space  is called a  mono Leibniz algebra if every one-generated subalgebra is a Leibniz algebra.
\end{definitionA}

The algebra ${\bf A}$ is binary Leibniz if and only if it satisfies the identities
\begin{longtable}{lclrcr}
${\mathcal L}(x,y,z)+{\mathcal L}(x,z,y)$&$=$&$0,$
&${\mathcal L}(x,y,z)+{\mathcal L}(z,y,x)$&$=$&$0,$\\
\multicolumn{4}{c}{${\mathcal L}(xy,z,t)+{\mathcal L}(xt,z,y)+{\mathcal L}(zy,x,t)+{\mathcal L}(zt,x,y)$} &$=$&$ 0.$ \\
\end{longtable}

The algebra ${\bf A}$ is mono  Leibniz if and only if it satisfies the identities
\begin{longtable}{lcllcl}
  ${\mathcal L}(a,a,a)$&$=$&$0,$ \quad ${\mathcal L}(a,a,aa)$&$=$&$0.$
\end{longtable}

By    linearizing  these identities,  we have

\begin{longtable}{lcl}
${\mathcal L}(x,y,z)+{\mathcal L}(y,x,z)+{\mathcal L}(y,z,x)+{\mathcal L}(x,z,y)+{\mathcal L}(z,x,y)+{\mathcal L}(z,y,x)$&$=$&$0,$\\
${\mathcal L}(x,y,zt)+{\mathcal L}(x,y,tz)+{\mathcal L}(x,z,yt)+{\mathcal L}(x,t,yz)+{\mathcal L}(x,z,ty)+{\mathcal L}(x,t,zy)$&$+$&\\
${\mathcal L}(y,x,zt)+{\mathcal L}(y,x,tz)+{\mathcal L}(z,x,yt)+{\mathcal L}(t,x,yz)+{\mathcal L}(z,x,ty)+{\mathcal L}(t,x,zy)$&$+$&\\
${\mathcal L}(y,z,xt)+{\mathcal L}(y,t,xz)+{\mathcal L}(z,y,xt)+{\mathcal L}(t,y,xz)+{\mathcal L}(z,t,xy)+{\mathcal L}(t,z,xy)$&$+$&\\
${\mathcal L}(y,z,tx)+{\mathcal L}(y,t,zx)+{\mathcal L}(z,y,tx)+{\mathcal L}(t,y,zx)+{\mathcal L}(z,t,yx)+{\mathcal L}(t,z,yx)$&$=$&$0.$
\end{longtable}

From the definition of binary and mono  Leibniz algebras we can conclude the following:
\begin{enumerate}
  \item \emph{For binary Leibniz algebras}:
  \begin{itemize}
  \item There are no nontrivial $1$-dimensional nilpotent binary Leibniz algebras.
  \item Two-dimensional and three-dimensional nilpotent binary Leibniz algebras are Leibniz algebras.
  \item Two-generated binary Leibniz algebras are Leibniz algebra.
  \item A binary Leibniz algebra $\mathfrak L,$ such that for $\mathfrak L^3=0,$ is a Leibniz algebra.
  \end{itemize}
Thus, non-Leibniz binary Leibniz algebras should be at least three generated.
Consequently, we have that any nilpotent binary Leibniz algebra with a dimension less than five is a Leibniz algebra.

  \item \emph{For mono  Leibniz algebras}
  \begin{itemize}
  \item There are no nontrivial $1$-dimensional nilpotent mono  Leibniz algebras.
  \item One-generated mono  Leibniz algebras are  Leibniz algebras.
  \item Two and three-dimensional nilpotent mono  Leibniz algebras are  Leibniz algebras.
   \item A mono  Leibniz algebra $\mathfrak L,$ such that for $\mathfrak L^3=0,$  is a Leibniz algebra.
   \end{itemize}

 Thus, we conclude that any  nilpotent non-Leibniz mono  Leibniz algebra has at least two generators and $\mathfrak L^3\neq0$.
 \end{enumerate}

In this work, we classify five-dimensional nilpotent non-Leibniz   binary Leibniz algebras and  four-dimensional nilpotent non-Leibniz   mono  Leibniz algebras.

\subsection{Method of classification of nilpotent algebras}
Throughout this paper, we use the notations and methods well written in \cite{hac16},
which we have adapted for the binary Leibniz and mono  Leibniz algebras case with some modifications.
Further in this section, we give some important definitions.

As we know, the central extension is formed by the second cohomology space of a given algebras. We now define the  second cohomology space for binary and unary Leibniz algebras.

\begin{itemize}
  \item Let $({\bf A}, \cdot)$ be a binary Leibniz algebra over  $\mathbb C$
and $\mathbb V$ be a vector space over ${\mathbb C}$. The $\mathbb C$-linear space ${\rm Z}_{\rm BL}^{2}\left(
\bf A,\mathbb V \right) $ is defined as the set of all  bilinear maps $\theta  \colon {\bf A} \times {\bf A} \longrightarrow {\mathbb V}$ such that
\begin{longtable}{lclrcr}
${\mathcal L_{\theta}}(x,y,z)+{\mathcal L_{\theta}}(x,z,y)$&$=$&$0,$
&${\mathcal L_{\theta}}(x,y,z)+{\mathcal L_{\theta}}(z,y,x)$&$=$&$0,$\\
\multicolumn{4}{c}{${\mathcal L_{\theta}}(xy,z,t)+{\mathcal L_{\theta}}(xt,z,y)+{\mathcal L_{\theta}}(zy,x,t)+{\mathcal L_{\theta}}(zt,x,y)$} &$=$&$ 0.$
\end{longtable}

These elements will be called binary Leibniz {\it cocycles}. For a
linear map $f$ from $\bf A$ to  $\mathbb V$, if we define $\delta f\colon {\bf A} \times
{\bf A} \longrightarrow {\mathbb V}$ by $\delta f  (x,y ) =f(xy )$, then $\delta f\in {\rm Z}_{\rm BL}^{2}\left( {\bf A},{\mathbb V} \right) $. We define ${\rm B}^{2}\left({\bf A},{\mathbb V}\right) =\left\{ \theta =\delta f\ : f\in {\rm Hom}\left( {\bf A},{\mathbb V}\right) \right\} $.
We define the {\it second cohomology space} ${\rm H}_{\rm BL}^{2}\left( {\bf A},{\mathbb V}\right) $ as the quotient space ${\rm Z}_{\rm BL}^{2}
\left( {\bf A},{\mathbb V}\right) \big/{\rm B}^{2}\left( {\bf A},{\mathbb V}\right) $.

\item Let $({\bf A}, \cdot)$ be a mono  Leibniz algebra over  $\mathbb C$
and $\mathbb V$ be a vector space over ${\mathbb C}$. The $\mathbb C$-linear space ${\rm Z}_{\rm ML}^{2}\left(
\bf A,\mathbb V \right) $ is defined as the set of all  bilinear maps $\theta  \colon {\bf A} \times {\bf A} \longrightarrow {\mathbb V}$ such that
\begin{longtable}{lcl}
${\mathcal L_{\theta}}(x,y,z)+{\mathcal L_{\theta}}(y,x,z)+{\mathcal L_{\theta}}(y,z,x)+{\mathcal L_{\theta}}(x,z,y)+{\mathcal L_{\theta}}(z,x,y)+{\mathcal L_{\theta}}(z,y,x)$&$=$&$0,$\\
${\mathcal L_{\theta}}(x,y,zt)+{\mathcal L_{\theta}}(x,y,tz)+{\mathcal L_{\theta}}(x,z,yt)+{\mathcal L_{\theta}}(x,t,yz)+{\mathcal L_{\theta}}(x,z,ty)+{\mathcal L_{\theta}}(x,t,zy)$&$+$&\\
${\mathcal L_{\theta}}(y,x,zt)+{\mathcal L_{\theta}}(y,x,tz)+{\mathcal L_{\theta}}(z,x,yt)+{\mathcal L_{\theta}}(t,x,yz)+{\mathcal L_{\theta}}(z,x,ty)+{\mathcal L_{\theta}}(t,x,zy)$&$+$&\\
${\mathcal L_{\theta}}(y,z,xt)+{\mathcal L_{\theta}}(y,t,xz)+{\mathcal L_{\theta}}(z,y,xt)+{\mathcal L_{\theta}}(t,y,xz)+{\mathcal L_{\theta}}(z,t,xy)+{\mathcal L_{\theta}}(t,z,xy)$&$+$&\\
${\mathcal L_{\theta}}(y,z,tx)+{\mathcal L_{\theta}}(y,t,zx)+{\mathcal L_{\theta}}(z,y,tx)+{\mathcal L_{\theta}}(t,y,zx)+{\mathcal L_{\theta}}(z,t,yx)+{\mathcal L_{\theta}}(t,z,yx)$&$=$&$0.$
\end{longtable}

These elements will be called mono  Leibniz {\it cocycles}. We define the {\it second cohomology space} ${\rm H}_{\rm ML}^{2}\left( {\bf A},{\mathbb V}\right) $ as the quotient space ${\rm Z}_{\rm ML}^{2}
\left( {\bf A},{\mathbb V}\right) \big/{\rm B}^{2}\left( {\bf A},{\mathbb V}\right) $.
\end{itemize}
Now, when we say ${\bf A}$ algebra we mean binary or mono  Leibniz algebra.
Let $\operatorname{Aut}({\bf A}) $ be the automorphism group of  ${\bf A} $ and let $\phi \in \operatorname{Aut}({\bf A})$. For $\theta$ cocycles of the algebra ${\bf A}$, define  the action of the group $\operatorname{Aut}({\bf A}) $ on space of cocycles of the algebra ${\bf A}$ by $\phi \theta (x,y)=\theta \left( \phi \left( x\right) ,\phi \left( y\right) \right) $.  It is easy to verify that
 ${\rm B}^{2}\left( {\bf A},{\mathbb V}\right)$  is invariant under the action of $\operatorname{Aut}({\bf A}).$
 So, we have an induced action of  $\operatorname{Aut}({\bf A})$  on ${\rm H}_{\rm BL}^{2}\left( {\bf A},{\mathbb V}\right)$ (${\rm H}_{\rm ML}^{2}\left( {\bf A},{\mathbb V}\right)$).

Let $\bf A$ be a algebra of dimension $m$ over  $\mathbb C$ and ${\mathbb V}$ be a $\mathbb C$-vector
space of dimension $k$. For the bilinear map $\theta$, define on the linear space ${\bf A}_{\theta } = {\bf A}\oplus {\mathbb V}$ the
bilinear product `` $\left[ -,-\right] _{{\bf A}_{\theta }}$'' by $\left[ x+x^{\prime },y+y^{\prime }\right] _{{\bf A}_{\theta }}=
 xy +\theta(x,y) $ for all $x,y\in {\bf A},x^{\prime },y^{\prime }\in {\mathbb V}$.
The algebra ${\bf A}_{\theta }$ is called a $k$-{\it dimensional central extension} of ${\bf A}$ by ${\mathbb V}$. One can easily check that ${\bf A_{\theta}}$ is a binary (mono)
Leibniz algebra if and only if $\theta$ binary (mono)
Leibniz cocycle of the algebra ${\bf A}$.

Call the
set $\operatorname{Ann}(\theta)=\left\{ x\in {\bf A}:\theta \left( x, {\bf A} \right)+ \theta \left({\bf A} ,x\right) =0\right\} $
the {\it annihilator} of $\theta $. We recall that the {\it annihilator} of an  algebra ${\bf A}$ is defined as
the ideal $\operatorname{Ann}(  {\bf A} ) =\left\{ x\in {\bf A}:  x{\bf A}+ {\bf A}x =0\right\}$. Observe
 that
$\operatorname{Ann}\left( {\bf A}_{\theta }\right) =(\operatorname{Ann}(\theta) \cap\operatorname{Ann}({\bf A}))
 \oplus {\mathbb V}$.

A well-known result states that every algebra with a non-zero annihilator is a central extension of a smaller-dimensional algebra.

\begin{definition}
Let ${\bf A}$ be an algebra and $I$ be a subspace of $\operatorname{Ann}({\bf A})$. If ${\bf A}={\bf A}_0 \oplus I$
then $I$ is called an {\it annihilator component} of ${\bf A}$.
A central extension of an algebra $\bf A$ without annihilator component is called a {\it non-split central extension}.
\end{definition}

Our task is to find all central extensions of an algebra $\bf A$ by
a space ${\mathbb V}$.  In order to solve the isomorphism problem we need to study the
action of $\operatorname{Aut}({\bf A})$ on ${\rm H^{2}}\left( {\bf A},{\mathbb V}
\right) $. To do that, let us fix a basis $e_{1},\ldots ,e_{s}$ of ${\mathbb V}$, and $
\theta \in {\rm Z^{2}}\left( {\bf A},{\mathbb V}\right) $. Then $\theta $ can be uniquely
written as $\theta \left( x,y\right) =
\displaystyle \sum_{i=1}^{s} \theta _{i}\left( x,y\right) e_{i}$, where $\theta _{i}\in
{\rm Z^{2}}\left( {\bf A},\mathbb C\right) $. Moreover, $\operatorname{Ann}(\theta)=\operatorname{Ann}(\theta _{1})\cap\operatorname{Ann}(\theta _{2})\cap\ldots \cap\operatorname{Ann}(\theta _{s})$. Furthermore, $\theta \in
{\rm B^{2}}\left( {\bf A},{\mathbb V}\right) $\ if and only if all $\theta _{i}\in {\rm B^{2}}\left( {\bf A},
\mathbb C\right) $.
It is not difficult to prove (see \cite[Lemma 13]{hac16}) that given a  binary (resp. mono) Leibniz  algebra ${\bf A}_{\theta}$, if we write as
above $\theta \left( x,y\right) = \displaystyle \sum_{i=1}^{s} \theta_{i}\left( x,y\right) e_{i}\in {\rm Z^{2}}\left( {\bf A},{\mathbb V}\right) $ and
$\operatorname{Ann}(\theta)\cap \operatorname{Ann}\left( {\bf A}\right) =0$, then ${\bf A}_{\theta }$ has an
annihilator component if and only if $\left[ \theta _{1}\right] ,\left[
\theta _{2}\right] ,\ldots ,\left[ \theta _{s}\right] $ are linearly
dependent in ${\rm H^{2}}\left( {\bf A},\mathbb C\right) $.

\;

Let ${\mathbb V}$ be a finite-dimensional vector space over $\mathbb C$. The {\it Grassmannian} $G_{k}\left( {\mathbb V}\right) $ is the set of all $k$-dimensional
linear subspaces of $ {\mathbb V}$. Let $G_{s}\left( {\rm H^{2}}\left( {\bf A},\mathbb C\right) \right) $ be the Grassmannian of subspaces of dimension $s$ in
${\rm H^{2}}\left( {\bf A},\mathbb C\right) $. There is a natural action of $\operatorname{Aut}({\bf A})$ on $G_{s}\left( {\rm H^{2}}\left( {\bf A},\mathbb C\right) \right) $.
Let $\phi \in \operatorname{Aut}({\bf A})$. For $W=\left\langle
\left[ \theta _{1}\right] ,\left[ \theta _{2}\right] ,\dots,\left[ \theta _{s}
\right] \right\rangle \in G_{s}\left( {\rm H^{2}}\left( {\bf A},\mathbb C
\right) \right) $ define $\phi W=\left\langle \left[ \phi \theta _{1}\right]
,\left[ \phi \theta _{2}\right] ,\dots,\left[ \phi \theta _{s}\right]
\right\rangle $. We denote the orbit of $W\in G_{s}\left(
{\rm H^{2}}\left( {\bf A},\mathbb C\right) \right) $ under the action of $\operatorname{Aut}({\bf A})$ by $\operatorname{Orb}(W)$. Given
\[
W_{1}=\left\langle \left[ \theta _{1}\right] ,\left[ \theta _{2}\right] ,\dots,
\left[ \theta _{s}\right] \right\rangle ,W_{2}=\left\langle \left[ \vartheta
_{1}\right] ,\left[ \vartheta _{2}\right] ,\dots,\left[ \vartheta _{s}\right]
\right\rangle \in G_{s}\left( {\rm H^{2}}\left( {\bf A},\mathbb C\right)
\right),
\]
we easily have that if $W_{1}=W_{2}$, then $ \bigcap\limits_{i=1}^{s}\operatorname{Ann}(\theta _{i})\cap \operatorname{Ann}\left( {\bf A}\right) = \bigcap\limits_{i=1}^{s}
\operatorname{Ann}(\vartheta _{i})\cap\operatorname{Ann}( {\bf A}) $, and therefore we can introduce
the set
\[
{\bf T}_{s}({\bf A}) =\left\{ W=\left\langle \left[ \theta _{1}\right] ,
\left[ \theta _{2}\right] ,\dots,\left[ \theta _{s}\right] \right\rangle \in
G_{s}\left( {\rm H^{2}}\left( {\bf A},\mathbb C\right) \right) : \bigcap\limits_{i=1}^{s}\operatorname{Ann}(\theta _{i})\cap\operatorname{Ann}({\bf A}) =0\right\},
\]
which is stable under the action of $\operatorname{Aut}({\bf A})$.

\

Now, let ${\mathbb V}$ be an $s$-dimensional linear space and let us denote by
${\bf E}\left( {\bf A},{\mathbb V}\right) $ the set of all {\it non-split $s$-dimensional central extensions} of ${\bf A}$ by
${\mathbb V}$. By above, we can write
\[
{\bf E}\left( {\bf A},{\mathbb V}\right) =\left\{ {\bf A}_{\theta }:\theta \left( x,y\right) = \sum_{i=1}^{s}\theta _{i}\left( x,y\right) e_{i} \ \ \text{and} \ \ \left\langle \left[ \theta _{1}\right] ,\left[ \theta _{2}\right] ,\dots,
\left[ \theta _{s}\right] \right\rangle \in {\bf T}_{s}({\bf A}) \right\} .
\]
We also have the following result, which can be proved as in \cite[Lemma 17]{hac16}.

\begin{lemma}
 Let ${\bf A}_{\theta },{\bf A}_{\vartheta }\in {\bf E}\left( {\bf A},{\mathbb V}\right) $. Suppose that $\theta \left( x,y\right) =  \displaystyle \sum_{i=1}^{s}
\theta _{i}\left( x,y\right) e_{i}$ and $\vartheta \left( x,y\right) =
\displaystyle \sum_{i=1}^{s} \vartheta _{i}\left( x,y\right) e_{i}$.
Then the  binary (resp. mono) Leibniz algebras ${\bf A}_{\theta }$ and ${\bf A}_{\vartheta } $ are isomorphic
if and only if
$$\operatorname{Orb}\left\langle \left[ \theta _{1}\right] ,
\left[ \theta _{2}\right] ,\dots,\left[ \theta _{s}\right] \right\rangle =
\operatorname{Orb}\left\langle \left[ \vartheta _{1}\right] ,\left[ \vartheta
_{2}\right] ,\dots,\left[ \vartheta _{s}\right] \right\rangle .$$
\end{lemma}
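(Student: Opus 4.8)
The plan is to follow the Skjelbred--Sund machinery essentially verbatim, adapting the Lie-algebra argument of \cite[Lemma 17]{hac16} to the binary (resp. mono) Leibniz setting. The only structural facts we actually use are: (i) the product on a central extension ${\bf A}_{\theta}$ is given by $[x+x',y+y']_{{\bf A}_{\theta}}=xy+\theta(x,y)$; (ii) ${\mathbb V}=\{0\}\oplus{\mathbb V}$ sits inside $\operatorname{Ann}({\bf A}_{\theta})$; and (iii) the membership condition ${\bf A}_{\theta}\in{\bf E}({\bf A},{\mathbb V})$ is equivalent to $\langle[\theta_1],\dots,[\theta_s]\rangle\in{\bf T}_s({\bf A})$, which in turn forces $\operatorname{Ann}(\theta)\cap\operatorname{Ann}({\bf A})=0$ and hence $\operatorname{Ann}({\bf A}_{\theta})={\mathbb V}$ exactly. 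None of these depend on which identities ${\bf A}$ satisfies, so the proof is identity-agnostic.

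First I would prove the easy direction: if the orbits coincide, the algebras are isomorphic. Pick $\phi\in\operatorname{Aut}({\bf A})$ with $\phi\langle[\theta_1],\dots,[\theta_s]\rangle=\langle[\vartheta_1],\dots,[\vartheta_s]\rangle$. Then there is an invertible $s\times s$ matrix $(c_{ji})$ and a linear map $f\colon{\bf A}\to{\mathbb V}$ such that $\phi\theta_i=\sum_j c_{ji}\vartheta_j+\delta f_i$ in ${\rm Z}^2({\bf A},{\mathbb C})$ for suitable components $f_i$ of $f$. Define $\Phi\colon{\bf A}_{\theta}\to{\bf A}_{\vartheta}$ by $\Phi(x+v)=\phi(x)+f(x)+\tau(v)$, where $\tau\in GL({\mathbb V})$ is built from the matrix $(c_{ji})$. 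A direct check using (i) shows $\Phi([x+v,y+w]_{{\bf A}_{\theta}})=[\Phi(x+v),\Phi(y+w)]_{{\bf A}_{\vartheta}}$: the ${\bf A}$-part reproduces $\phi$ being an automorphism, and the ${\mathbb V}$-part reproduces the cocycle relation $\phi\theta=\tau^{-1}\circ(\vartheta\circ(\phi\times\phi))$ modulo coboundaries, which is precisely how $\tau$ and $f$ were chosen. Invertibility of $\Phi$ is clear since $\phi$ and $\tau$ are invertible.

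Next, the converse, which is where the real content sits. Given an isomorphism $\Psi\colon{\bf A}_{\theta}\xrightarrow{\sim}{\bf A}_{\vartheta}$, the first step is that $\Psi$ must send $\operatorname{Ann}({\bf A}_{\theta})$ onto $\operatorname{Ann}({\bf A}_{\vartheta})$, and by (ii)--(iii) both annihilators equal ${\mathbb V}$; hence $\Psi({\mathbb V})={\mathbb V}$. Therefore $\Psi$ descends to an isomorphism $\bar\Psi$ of the quotients ${\bf A}_{\theta}/{\mathbb V}\cong{\bf A}$ and ${\bf A}_{\vartheta}/{\mathbb V}\cong{\bf A}$, giving $\phi:=\bar\Psi\in\operatorname{Aut}({\bf A})$. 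Writing $\Psi(x)=\phi(x)+f(x)$ for $x\in{\bf A}$ (with $f\colon{\bf A}\to{\mathbb V}$ linear, well-defined since ${\bf A}$ is a complement) and $\Psi|_{\mathbb V}=\tau\in GL({\mathbb V})$, and then expanding $\Psi([x,y]_{{\bf A}_{\theta}})=[\Psi(x),\Psi(y)]_{{\bf A}_{\vartheta}}$ via (i), the ${\mathbb V}$-components yield $\tau\bigl(\theta(x,y)\bigr)=\vartheta(\phi(x),\phi(y))+f(xy)$, i.e. $\tau\circ\theta=(\phi\vartheta)+\delta f$ as ${\mathbb V}$-valued cocycles. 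Reading this off in the chosen basis of ${\mathbb V}$ shows each $\phi\theta_i$ is, modulo ${\rm B}^2({\bf A},{\mathbb C})$, a linear combination (with coefficients the entries of $\tau^{-1}$) of the $\vartheta_j$, so $\phi\langle[\theta_1],\dots,[\theta_s]\rangle\subseteq\langle[\vartheta_1],\dots,[\vartheta_s]\rangle$; equality of dimensions (both $=s$, since both tuples lie in a Grassmannian of $s$-planes) upgrades this to $\phi\langle[\theta_1],\dots,[\theta_s]\rangle=\langle[\vartheta_1],\dots,[\vartheta_s]\rangle$, hence the orbits agree. The main obstacle is the bookkeeping in this last paragraph — making sure that $f$ is genuinely well-defined on all of ${\bf A}$ (not just on a transversal), that $\tau$ is forced to be a \emph{linear isomorphism} of ${\mathbb V}$ and not merely a linear map, and that passing to the Grassmannian legitimately turns the a priori one-sided containment of subspaces into an equality; all of this is routine but must be done carefully, exactly as in \cite[Lemma 17]{hac16}.
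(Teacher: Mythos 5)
Your proposal is correct and follows exactly the standard Skjelbred--Sund argument that the paper itself invokes: the paper gives no proof of this lemma beyond the remark that it ``can be proved as in \cite[Lemma 17]{hac16}'', and your reconstruction (isomorphisms preserve annihilators, $\operatorname{Ann}({\bf A}_{\theta})={\mathbb V}$ by the ${\bf T}_s$ condition, descend to an automorphism of ${\bf A}$, compare ${\mathbb V}$-components to get $\tau\circ\theta=\phi\vartheta+\delta f$, then pass to spans in ${\rm H}^2$) is precisely that argument, correctly noting it is identity-agnostic. No gaps.
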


This shows that there exists a one-to-one correspondence between the set of $\operatorname{Aut}({\bf A})$-orbits on ${\bf T}_{s}\left( {\bf A}\right) $ and the set of
isomorphism classes of ${\bf E}\left( {\bf A},{\mathbb V}\right) $. Consequently, we have a
procedure that allows us, given a  binary (resp. mono) Leibniz  algebra ${\bf A}'$ of
dimension $n-s$, to construct all non-split central extensions of ${\bf A}'$. This procedure is:

\begin{enumerate}
\item For a given binary (resp. mono)  Leibniz  algebra ${\bf A}'$ of dimension $n-s $, determine ${\rm H^{2}}( {\bf A}',\mathbb {C}) $, $\operatorname{Ann}({\bf A}')$ and $\operatorname{Aut}({\bf A}')$.

\item Determine the set of $\operatorname{Aut}({\bf A}')$-orbits on ${\bf T}_{s}({\bf A}') $.

\item For each orbit, construct the binary (resp. mono)  Leibniz  algebra associated with a
representative of it.
\end{enumerate}

\medskip

The above-described method gives all (Leibniz and non-Leibniz)  binary (resp. mono)  Leibniz  algebras. But we are interested in developing this method in such a way that it only gives non-Leibniz    binary (resp. mono)  Leibniz algebras because the classification of all five-dimensional nilpotent Leibniz algebras is given in \cite{leib5}. Clearly, any central extension of a non-Leibniz binary (resp. mono)   Leibniz  is non-Leibniz.
But a Leibniz  algebra may have extensions that are not binary (resp. mono)  Leibniz  algebras. More precisely, let ${\mathcal{L}}$ be a   Leibniz  algebra and
$\theta \in {\rm Z_{\rm BL}^2}({\mathcal{L}}, {\mathbb C})$  (resp. $\theta \in {\rm Z_{\rm ML}^2}({\mathcal{L}}, {\mathbb C})$).
Then ${\mathcal{L}}_{\theta }$ is a   Leibniz  algebra if and only if
\begin{equation*}
 \theta(xy,z)= \theta(xz,y)+\theta(x,yz)
 \end{equation*}
for all $x,y,z\in {\mathcal{L}}.$
Define the subspace
${\rm Z_{\mathcal{L}}^2}(\mathcal{L},{\mathbb C})$ of ${\rm Z_{\rm{BL}}^2}({\mathcal{L}},{\mathbb C})$ (resp. of ${\rm Z_{\rm{ML}}^2}({\mathcal{L}},{\mathbb C})$)
by
\begin{center}
${\rm Z_{\mathcal{L}}^2}({\mathcal{L}},{\mathbb C}) =\{  \theta \in
{\rm Z_{\rm BL}^2}({\mathcal{L}},{\mathbb C})$ (resp.  ${\rm Z_{\rm ML}^2}({\mathcal{L}},{\mathbb C})$)$:
\theta(xy,z)= \theta(xz,y)+\theta(x,yz),
\text{ for all } x, y,z\in {\mathcal{L}} \}.$
\end{center}

Observe that ${\rm B^2}({\mathcal{L}},{\mathbb C})\subseteq{\rm Z_{\rm{BL}}^2}({\mathcal{L}},{\mathbb C})$ (resp. ${\rm Z_{\rm{ML}}^2}({\mathcal{L}},{\mathbb C})$).
Let ${\rm H_{\mathcal{L}}^2}({\mathcal{L}},{\mathbb C}) =
{\rm Z_{\mathcal{L}}^2}({\mathcal{L}},{\mathbb C}) \big/{\rm B^2}({\mathcal{L}},{\mathbb C}).$
Then ${\rm H_{\mathcal{L}}^2}({\mathcal{L}},{\mathbb C})$ is a subspace of $
{\rm H_{\rm{BL}}^2}({\mathcal{L}},{\mathbb C}).$ Define
\begin{eqnarray*}
{\bf R}_{s}({\mathcal{L}})  &=&\left\{ {\mathcal W}\in {\bf T}_{s}({\mathcal{L}}) :{\mathcal W}\in G_{s}({\rm H_{\mathcal{L}}^2}({\mathcal{L}},{\mathbb C}) ) \right\}, \\
{\bf U}_{s}({\mathcal{L}})  &=&\left\{ {\mathcal W}\in {\bf T}_{s}({\mathcal{L}}) :{\mathcal W}\notin G_{s}({\rm H_{\mathcal{L}}^2}({\mathcal{L}},{\mathbb C}) ) \right\}.
\end{eqnarray*}
Then ${\bf T}_{s}({\mathcal{L}}) ={\bf R}_{s}(
{\mathcal{L}}) \mathbin{\mathaccent\cdot\cup} {\bf U}_{s}(
{\mathcal{L}}).$ The sets ${\bf R}_{s}({\mathcal{L}}) $
and ${\bf U}_{s}({\mathcal{L}})$ are stable under the action
of $\operatorname{Aut}({\mathcal{L}}).$
Thus, the  binary (resp. mono) Leibniz  algebras
corresponding to the representatives of $\operatorname{Aut}({\mathcal{L}}) $%
-orbits on ${\bf R}_{s}({\mathcal{L}})$ are   Leibniz  algebras,
while those corresponding to the representatives of $\operatorname{Aut}({\mathcal{L}}%
) $-orbits on ${\bf U}_{s}({\mathcal{L}})$ are non-Leibniz binary (resp. mono)  Leibniz algebras. Hence, we may construct all non-split non-Leibniz binary (resp. mono)  Leibniz algebras $%
\bf{A}$ of dimension $n$ with $s$-dimensional annihilator
from a given binary (resp. mono) Leibniz algebra $\bf{A}%
^{\prime }$ of dimension $n-s$ in the following way:

\begin{enumerate}
\item If $\bf{A}^{\prime }$ is non-Leibniz, then apply the procedure.

\item Otherwise, do the following:

\begin{enumerate}
\item Determine ${\bf U}_{s}(\bf{A}^{\prime })$ and $%
\operatorname{Aut}(\bf{A}^{\prime }).$

\item Determine the set of $\operatorname{Aut}(\bf{A}^{\prime })$-orbits on ${\bf U%
}_{s}(\bf{A}^{\prime }).$

\item For each orbit, construct the   binary (resp. mono)  Leibniz  algebra corresponding to one of its
representatives.
\end{enumerate}
\end{enumerate}

\subsubsection{Notations}
Let us introduce the following notations. Let ${\bf A}$ be a nilpotent algebra with
a basis $e_{1},e_{2}, \ldots, e_{n}.$ Then by $\Delta_{ij}$\ we will denote the
bilinear form
$\Delta_{ij}:{\bf A}\times {\bf A}\longrightarrow \mathbb C$
with $\Delta_{ij}(e_{l},e_{m}) = \delta_{il}\delta_{jm}.$
The set $\left\{ \Delta_{ij}:1\leq i, j\leq n\right\}$ is a basis for the linear space of
bilinear forms on ${\bf A},$ so every $\theta \in
{\rm Z}_{\rm BL}^2({\bf A},\bf \mathbb V )$ (resp. $\theta \in
{\rm Z}_{\rm ML}^2({\bf A},\bf \mathbb V )$ ) can be uniquely written as $
\theta = \displaystyle \sum_{1\leq i,j\leq n} c_{ij}\Delta _{{i}{j}}$, where $
c_{ij}\in \mathbb C$.
Let us fix the following notations for our nilpotent algebras:

$$\begin{array}{lll}

{\mathcal N}_{j}& \mbox{---}& j\mbox{th }3\mbox{-dimensional        $2$-step nilpotent algebra.} \\

{\mathfrak N}_{j}& \mbox{---}& j\mbox{th }4\mbox{-dimensional   $2$-step nilpotent algebra.} \\

\mathbb{M}_{j}& \mbox{---}& j\mbox{th }4\mbox{-dimensional   nilpotent  (non-Leibniz) mono  Leibniz algebra.} \\

{\bf B}_{j}& \mbox{---}& j\mbox{th }5\mbox{-dimensional    nilpotent  (non-Leibniz)  binary Leibniz algebra.} \\

\end{array}$$

\subsection{Classification of 5-dimensional nilpotent binary Leibniz algebras}

When we construct  algebras by   the central extension, the number of generators does not change.
So the generators of our algebra that we are looking at through the central extension must be equal to 3.
So we construct non-Leibniz binary Leibniz algebras through one-dimensional central extensions of 4 generated four dimensional nilpotent Leibniz algebras.
Such algebras are as follows.

{\small
\begin{longtable}{llllllll}
\hline

${\mathfrak N}_{01}$ & $:$ &  $e_1e_1 = e_2$ &&&&\\

${\rm H}_{\mathcal  L}^2({\mathfrak N}_{01})$ & $=$ & \multicolumn{6}{l}{$
\Big\langle
[\Delta_{13}],[\Delta_{14}],[\Delta_{21}],[\Delta_{31}],[\Delta _{33}],[\Delta _{34}],[\Delta _{41}],[\Delta _{43}],[\Delta _{44}]
\Big\rangle $}\\

${\rm H}_{\rm BL}^2({\mathfrak N}_{01})$ & $=$ & \multicolumn{6}{l}{${\rm H}_{\mathcal  L}^2({\mathfrak N}_{01})$}\\

\hline
{${\mathfrak N}_{03}$} &$:$ &  $e_1e_2=  e_3$ & $e_2e_1=-e_3$ &&& \\

${\rm H}_{\mathcal  L}^2({\mathfrak N}_{03})$ & $=$ & \multicolumn{6}{l}{$
\Big\langle
[\Delta_{11}],[\Delta_{12}],[\Delta_{14}],[\Delta_{22}],[\Delta_{24}],[\Delta_{13}-\Delta_{31}],[\Delta_{23}-\Delta_{32}],[\Delta _{41}],[\Delta _{42}],[\Delta_{44}]
\Big\rangle $}\\

${\rm H}_{\rm BL}^2({\mathfrak N}_{03})$ & $=$ & \multicolumn{6}{l}{${\rm H}_{\mathcal  L}^2({\mathfrak N}_{03})\oplus \langle[\Delta_{34}-\Delta_{43}] \rangle$}\\

\hline
${\mathfrak N}_{04}^{\alpha}$ &$:$ & $e_1e_1=  e_3$ & $e_1e_2=e_3$& $e_2e_2=\alpha e_3$  &&\\

${\rm H}_{\mathcal  L}^2({\mathfrak N}_{04}^{\alpha\neq0})$ & $=$ & \multicolumn{6}{l}{$
\Big\langle
[\Delta_{12}],[\Delta_{14}],[\Delta_{21}],[\Delta_{22}],[\Delta_{24}],
[\Delta _{41}],[\Delta _{42}],[\Delta_{44}]
\Big\rangle $}\\

${\rm H}_{\mathcal  L}^2({\mathfrak N}_{04}^{0})$ & $=$ & \multicolumn{6}{l}{$
\Big\langle
[\Delta_{12}],[\Delta_{14}],[\Delta_{21}],[\Delta_{22}],[\Delta_{24}],
[\Delta_{31}+\Delta_{32}],[\Delta _{41}],[\Delta _{42}],[\Delta_{44}]
\Big\rangle $}\\

${\rm H}_{\rm BL}^2({\mathfrak N}_{04}^{\alpha})$ & $=$ & \multicolumn{6}{l}{${\rm H}_{\mathcal  L}^2({\mathfrak N}_{04}^{\alpha})$}\\

\hline
${\mathfrak N}_{05}$ &$:$ & $e_1e_1=  e_3$& $e_1e_2=e_3$&  $e_2e_1=e_3$ &&\\

${\rm H}_{\mathcal  L}^2({\mathfrak N}_{05})$ & $=$ & \multicolumn{6}{l}{$
\Big\langle
[\Delta_{12}],[\Delta_{14}],[\Delta_{21}],[\Delta_{22}],[\Delta_{24}],
[\Delta _{41}],[\Delta _{42}],[\Delta_{44}]
\Big\rangle $}\\

${\rm H}_{\rm BL}^2({\mathfrak N}_{05})$ & $=$ & \multicolumn{6}{l}{${\rm H}_{\mathcal  L}^2({\mathfrak N}_{05})$}\\

\hline
${\mathfrak N}_{06}$ &$:$ & $e_1e_2 = e_4$& $e_3e_1 = e_4$   &&&\\

${\rm H}_{\mathcal  L}^2({\mathfrak N}_{06})$ & $=$ & \multicolumn{6}{l}{ $
\Big\langle
[\Delta_{11}],[\Delta_{13}],[\Delta_{21}],[\Delta_{22}],
[\Delta _ {23}],[\Delta_{31}], [\Delta _{32}],[\Delta_{33}]
\Big\rangle $}\\

${\rm H}_{\rm BL}^2({\mathfrak N}_{06})$ & $=$ & \multicolumn{6}{l}{${\rm H}_{\mathcal  L}^2({\mathfrak N}_{06})$}\\

\hline
${\mathfrak N}_{09}^{\alpha}$ &$:$ & $e_1e_1 = e_4$ & $e_1e_2 = \alpha e_4$ &  $e_2e_1 = -\alpha e_4$ & $e_2e_2 = e_4$ &  $e_3e_3 = e_4$\\

${\rm H}_L^2({\mathfrak N}_{09}^{\alpha})$ & $=$ & \multicolumn{6}{l}{$
\Big\langle
[\Delta_{12}],[\Delta_{13}],[\Delta_{21}],[\Delta_{22}],[\Delta_{23}],
[\Delta_{31}],[\Delta_{32}],[\Delta_{33}]
\Big\rangle $}\\

${\rm H}_{\rm BL}^2({\mathfrak N}_{09}^{\alpha})$ & $=$ & \multicolumn{6}{l}{${\rm H}_{\mathcal  L}^2({\mathfrak N}_{09}^{\alpha})$}\\

\hline
${\mathfrak N}_{10}$ &$:$ &  $e_1e_2 = e_4$ & $e_1e_3 = e_4$ & $e_2e_1 = -e_4$ & $e_2e_2 = e_4$ & $e_3e_1 = e_4$  \\

${\rm H}_L^2({\mathfrak N}_{10})$ & $=$ & \multicolumn{6}{l}{$
\Big\langle
[\Delta_{11}],[\Delta_{13}],[\Delta_{21}],[\Delta_{22}],[\Delta_{23}],
[\Delta_{31}],[\Delta_{32}],[\Delta_{33}]
\Big\rangle $}\\

${\rm H}_{\rm BL}^2({\mathfrak N}_{10})$ & $=$ & \multicolumn{6}{l}{${\rm H}_{\mathcal  L}^2({\mathfrak N}_{10})$}\\

\hline
${\mathfrak N}_{11}$ &$:$ &  $e_1e_1 = e_4$ & $e_1e_2 = e_4$ & $e_2e_1 = -e_4$ & $e_3e_3 = e_4$&  \\

${\rm H}_L^2({\mathfrak N}_{11})$ & $=$ & \multicolumn{6}{l}{$
\Big\langle
[\Delta_{12}],[\Delta_{13}],[\Delta_{21}],[\Delta_{22}],[\Delta_{23}],
[\Delta_{31}],[\Delta_{32}],[\Delta_{33}]
\Big\rangle $}\\

${\rm H}_{\rm BL}^2({\mathfrak N}_{11})$ & $=$ & \multicolumn{6}{l}{${\rm H}_{\mathcal  L}^2({\mathfrak N}_{11})$}\\

\hline
${\mathfrak N}_{15}$ &$:$ &  $e_1e_2 = e_4$ & $e_2e_1 = -e_4$ & $e_3e_3 = e_4$ && \\

${\rm H}_L^2({\mathfrak N}_{15})$ & $=$ & \multicolumn{6}{l}{$
\Big\langle
[\Delta_{11}],[\Delta_{13}],[\Delta_{21}],[\Delta_{22}],[\Delta_{23}],
[\Delta_{31}],[\Delta_{32}],[\Delta_{33}]
\Big\rangle $}\\

${\rm H}_{\rm BL}^2({\mathfrak N}_{15})$ & $=$ & \multicolumn{6}{l}{
${\rm H}_{\mathcal  L}^2({\mathfrak N}_{15})$}\\

\hline
\end{longtable}}

From the previous table, we obtain that only algebra $\mathfrak N_{03}$ has a non-Leibniz binary Leibniz central extension.

\subsubsection{Central extensions of ${\mathfrak N}_{03}$}
Let us use the following notations:

\begin{longtable}{lllllll}
$\nabla_1 = [\Delta_{11}],$ & $\nabla_2 = [\Delta_{12}],$ & $\nabla_3 = [\Delta_{13}-\Delta_{31}],$ &
$\nabla_4 = [\Delta_{14}],$ \\
$\nabla_5 = [\Delta_{22}],$ & $\nabla_6 = [\Delta_{23}-\Delta_{32}],$ &  $\nabla_7 = [\Delta_{24}],$ &$\nabla_8 = [\Delta_{41}],$ \\ $\nabla_9 = [\Delta_{42}],$ &
$\nabla_{10} = [\Delta_{44}],$ &$\nabla_{11} = [\Delta_{34}-\Delta_{43}].$\\
\end{longtable}	

Take $\theta=\sum\limits_{i=1}^{11}\alpha_i\nabla_i\in {\rm H}_{\rm BL}^2(\mathfrak N_{03}).$
	The automorphism group of $\mathfrak N_{03}$ consists of invertible matrices of the form
	$$
	\phi=
	\left(
\begin{array}{cccc}
 x & z & 0 & 0 \\
 y & t & 0 & 0 \\
 u & q & xt-yz & w \\
 v & p & 0 & r
\end{array}
\right).
	$$

	Since
	$$
	\phi^T\begin{pmatrix}
	\alpha_1 &  \alpha_2 & \alpha_3 & \alpha_4\\
	0        & \alpha_5  & \alpha_6 & \alpha_7\\
	-\alpha_3& -\alpha_6 & 0        & \alpha_{11}\\
	\alpha_8& \alpha_9   & -\alpha_{11} & \alpha_{10}\\
	\end{pmatrix} \phi=
	\begin{pmatrix}
	\alpha_1^*  &  -\alpha^*+\alpha_2^* & \alpha_3^* & \alpha_4^*\\
	\alpha^*           & \alpha_5^*  & \alpha_6^* & \alpha_7^*\\
	-\alpha_3^* & -\alpha_6^* & 0        & \alpha_{11}^*\\
	\alpha_8^*  & \alpha_9^*   & -\alpha_{11}^* & \alpha_{10}^*\\
	\end{pmatrix},
	$$
 we have that the action of ${\rm Aut} (\mathfrak N_{03})$  on the subspace
$\langle \sum\limits_{i=1}^{11}\alpha_i\nabla_i  \rangle$
is given by $\langle \sum\limits_{i=1}^{11}\alpha_i^{*}\nabla_i\rangle,$
where

\begin{longtable}{lcl}
$\alpha^*_1$&$=$&$x^2 \alpha _1+x y \alpha _2+v x \alpha _4+y^2 \alpha _5+v y \alpha _7+v x \alpha _8+v y \alpha _9+v^2 \alpha _{10},$ \\
$\alpha^*_2$&$=$&$2 x z \alpha _1+(t x+y z)\alpha_2+(px+vz)\alpha _4+2ty\alpha _5+$\\
&&$(tv+py)\alpha _7+(p x+vz)\alpha _8+(tv+py)\alpha _9+2pv\alpha _{10}$\\
$\alpha^*_3$&$=$&$(x t-y z)(x \alpha_3+y \alpha_6-v\alpha_{11}),$ \\
$\alpha_4^*$&$=$&$r(x\alpha_4+y \alpha_7+v \alpha_{10}+u \alpha_{11})+w(x\alpha_3+y \alpha _6-v \alpha_{11}),$\\
$\alpha_5^*$&$=$&$z^2 \alpha _1+t z \alpha _2+p z \alpha _4+t^2 \alpha _5+p t \alpha _7+p z \alpha _8+p t \alpha _9+p^2 \alpha _{10},$\\
$\alpha_6^*$&$=$&$(t x-y z)(z \alpha _3+t \alpha _6-p \alpha _{11}),$\\
$\alpha_7^*$&$=$&$w(z\alpha_3+t\alpha_6-p\alpha_{11})+r(z\alpha_4+t\alpha_7+p\alpha_{10}+q\alpha_{11}),$\\
$\alpha_8^*$&$=$&$-w x \alpha _3-w y \alpha _6+r x \alpha _8+r y \alpha _9+r v \alpha _{10}-r u \alpha_{11}+v w\alpha _{11},$\\
$\alpha_9^*$&$=$&$-w z \alpha _3-t w \alpha _6+r z \alpha _8+r t \alpha _9+p r \alpha _{10}-q r \alpha _{11}+p w \alpha _{11},$\\
$\alpha_{10}^*$&$=$&$r^2 \alpha _{10},$\\
$\alpha_{11}^*$&$=$&$r (t x-y z) \alpha _{11}.$\\
\end{longtable}

Since ${\rm H}_{\rm BL}^2(\mathfrak N_{03})={\rm H}_{\mathcal  L}^2(\mathfrak N_{03})\oplus \langle [\Delta_{34}-\Delta_{43}]\rangle$ and we are interested only in new algebras, we have  $\alpha_{11}\neq0.$

Then putting
$$\begin{array}{lcllcl}
v&=&(x \alpha_3+y \alpha_6)\alpha_{11}^{-1}, &
u&=&-((x \alpha _3 +y \alpha _6) \alpha _{10}+(x \alpha _4+y \alpha _7) \alpha _{11}) \alpha_{11}^{-2},\\
p&=&(z\alpha_3+t\alpha_6)\alpha_{11}^{-1}, &
q&=&-((z \alpha _3+t \alpha _6) \alpha _{10}+\left(z \alpha _4+t \alpha _7\right) \alpha _{11})\alpha _{11}^{-2},
\end{array}$$
we have
$$\alpha^*_3=\alpha^*_4=\alpha_6^*\alpha_7^*=0.$$

Thus, without loss of generality, we can suppose  $ \alpha_3 = \alpha_4 = \alpha_6 = \alpha_7 = 0 $.
Then $ p = v = u = q = 0 $  and we have the following relations:
\begin{longtable}{lll}
$\alpha^*_1$ &$=$&$x^2 \alpha _1+x y \alpha _2+y^2 \alpha _5,$\\
$\alpha_2^*$&$=$&$2 x z \alpha _1+(t x+y z) \alpha _2+2 t y \alpha _5,$\\
$\alpha_5^*$&$=$&$z^2 \alpha _1+t z \alpha _2+t^2 \alpha _5,$\\
$\alpha_8^*$&$=$&$r \left(x \alpha _8+y \alpha _9\right),$\\
$\alpha_9^*$&$=$&$r \left(z \alpha _8+t \alpha _9\right),$\\
$\alpha_{10}^*$&$=$&$r^2 \alpha _{10},$\\
$\alpha_{11}^*$&$=$&$r (t x-y z) \alpha_{11}.$
\end{longtable}

\begin{enumerate}
    \item  Let $\alpha_{1}=\alpha_{2}=\alpha_{5}=\alpha_{8}=\alpha_{9}=0$. Then we have the following subcases:
\begin{enumerate}
\item if $\alpha _{10}=0,$ then choosing $r=\frac{1}{(t x-y z) \alpha _{11}}$, we have the representative $\langle \nabla_{11} \rangle$;
\item if $\alpha _{10} \neq 0,$ then choosing $r=\frac{(t x-y z) \alpha _{11}}{\alpha_{10}},$
    we have the representative $\langle \nabla_{10}+\nabla_{11} \rangle$.

\end{enumerate}

\item  Let $\alpha_{1}=\alpha_{2}=\alpha_{5}=0, (\alpha_{8},\alpha_{9})\neq (0,0)$. Then without loss of generality, we can assume $\alpha_8\neq 0$ and consider the following subcases:
\begin{enumerate}
\item if $\alpha _{10}=0,$ then choosing
$z=-\frac{\alpha_9}{\alpha_{11}}$ and $t=\frac{\alpha_8}{\alpha_{11}}$, we have the representative $\langle \nabla_{8}+\nabla_{11} \rangle$;

\item if $\alpha _{10} \neq 0,$ then choosing
$x=0,$ $y=1,$
$r=\frac{\alpha _9}{\alpha_{10}},$
$z=-\frac{\alpha_9}{\alpha_{11}}$ and
$   t=\frac{\alpha_8}{\alpha_{11}},$
    we have the representative $\langle \nabla_{8}+\nabla_{10}+\nabla_{11} \rangle$.
\end{enumerate}

\item Let $(\alpha_{1},\alpha_{5})=(0,0).$ Then $\alpha_2\neq0$ and taking $ y = 0, \ z = 0 $ and we get that:
\begin{longtable}{lcllcllcllcl}
$\alpha^*_1$&$=$&$0,$& $\alpha_2^*$&$=$&$tx\alpha _2,$ & $\alpha_5^*$&$=$&$0,$ &
$\alpha_8^*$&$=$&$rx\alpha _8,$\\
$\alpha_9^*$&$=$&$rt\alpha _9,$ &
$\alpha_{10}^*$&$=$&$r^2 \alpha _{10},$ & $\alpha_{11}^*$&$=$&$rtx\alpha_{11}.$
\end{longtable}

Then we have the following cases:
\begin{enumerate}
\item $\alpha _{8}=\alpha _{9}=\alpha _{10}=0,$ then choosing $r=\frac{\alpha_2}{\alpha_{11}},$ we have the representative $\langle \nabla_{2}+\nabla_{11} \rangle$;

\item $\alpha _{8}=\alpha _{9}=0, \alpha _{10}\neq 0,$ then choosing $t=1,$ $r=\frac{\alpha_2}{\alpha_{11}}$ and $ x=\frac{\alpha_2\alpha_{10}}{\alpha_{11}^2},$ we have the representative  $\langle \nabla_{2}+\nabla_{10}+\nabla_{11} \rangle$;

\item $\alpha _{8}\neq 0, \alpha_9=0,  \alpha _{10}=0,$ then choosing $r=\frac{\alpha_2}{\alpha_{11}}$ and $t=\frac{\alpha_8}{\alpha_{11}},$ we have the representative  $\langle \nabla_{2}+\nabla_{8}+\nabla_{11} \rangle$;

\item $\alpha_{8}=0, \alpha_{9}\neq 0,  \alpha_{10}=0,$ in this case choosing the suitable automorphism we can obtain
$\alpha _ {8} \neq 0, \alpha_9 = 0,  \alpha_ {10} = 0,$ which is the case considered above;

\item $\alpha _{8}\neq 0, \alpha_9=0, \alpha _{10}\neq 0,$ then choosing
$t=\frac{\alpha_8}{\alpha_{11}},$ $r=\frac{\alpha_2}{\alpha_{11}}$ and $ x=\frac{\alpha_2\alpha_{10}}{\alpha_{8}\alpha_{11}},$ we have the representative $\langle \nabla_{2}+\nabla_{8}+\nabla_{10}+\nabla_{11} \rangle$;

\item $\alpha _{8}=0, \alpha_9\neq 0, \alpha _{10}\neq 0,$ in this case choosing the suitable automorphism we can obtain  $\alpha _{8}\neq 0, \alpha_9=0, \alpha _{10}\neq 0,$ which is the case considered above;

\item $\alpha _{8}\neq 0, \alpha _{9}\neq0,$ then choosing $r=\frac{\alpha_2}{\alpha_{11}},$  $t=\frac{\alpha_8}{\alpha_{11}}$ and $x=\frac{\alpha_9}{\alpha_{11}},$ we have the family of representatives $\langle \nabla_{2}+\nabla_{8}+\nabla_{9}+\alpha\nabla_{10}+\nabla_{11} \rangle$.
\end{enumerate}

\item Let $(\alpha_{1},\alpha_{5})\neq (0,0)$. Then without loss of generality, we can assume $\alpha_1\neq 0.$
If $\alpha_2^2\neq4\alpha_1\alpha_5$, then choosing
\begin{center}
$x=-\alpha_2+\sqrt{\alpha_2^2-4\alpha_1\alpha_5},\quad z=-\alpha_2-\sqrt{\alpha_2^2-4\alpha_1\alpha_5},\quad
y= 2 \alpha_1,\quad t=2\alpha_1,$
\end{center}
we have $\alpha_1^*=0$ and $\alpha_5^*=0$ and it is the case considered above.

Therefore, we can suppose $(\alpha_{1},\alpha_{5})\neq (0,0)$ and $\alpha_2^2=4 \alpha_1 \alpha_5$.
By taking $ z=-\frac{t \alpha_2} {2 \alpha_1},$ we have the following \begin{longtable}{lcllcllcl}
$\alpha^*_1$&$=$&$\frac{\left(2 x \alpha _1+y \alpha _2\right){}^2}{4 \alpha _1},$&
$\alpha_2^*$&$=$&$0,$ &
$\alpha_5^*$&$=$&$0,$ \\
$\alpha_8^*$&$=$&$r \left(x \alpha _8+y \alpha _9\right),$ &
$\alpha_9^*$&$=$&$\frac{rt\left( 2\alpha_1\alpha _9- \alpha _2 \alpha _8\right)}{2\alpha_1},$\\
$\alpha_{10}^*$&$=$&$r^2 \alpha _{10},$&
$\alpha_{11}^*$&$=$&$r \left(t x+\frac{t y \alpha _2}{2 \alpha _1}\right) \alpha _{11}.$
\end{longtable}

Without loss of generality, one can assume $\alpha_2 = 0$ obtain
\begin{longtable}{lcllcllcl}
$\alpha^*_1$&$=$&$x^2\alpha _1,$& $\alpha_8^*$&$=$&$r\left(x\alpha_8+y\alpha_9\right),$&
$\alpha_9^*$&$=$&$rt\alpha_9,$\\
$\alpha_{10}^*$&$=$&$r^2 \alpha _{10},$&
$\alpha_{11}^*$&$=$&$rtx\alpha _{11}.$
\end{longtable}

Now consider following cases:
\begin{enumerate}
  \item $\alpha _9=0,\alpha_8=0, \alpha_{10}=0,$ then choosing
  $z=0,$
  $x=1,$ $r=1$ and $t=\frac{\alpha _1}{\alpha_{11}},$ we have the representative $\langle \nabla_{1}+\nabla_{11} \rangle$;

  \item $\alpha _9=0,\alpha_8=0, \alpha_{10}\neq 0,$ then choosing $z=0,$ $x=\sqrt{\alpha_{10}},$
  $t=\frac{\sqrt{\alpha _1\alpha_{10}}}{\alpha_{11}}$
  and
  $r=\sqrt{\alpha_{1}},$ we have the representative $\langle \nabla_{1}+\nabla_{10}+\nabla_{11} \rangle$;


  \item $\alpha _9=0,\alpha_8\neq0,$ then choosing
  $z=0,$
  $x=1,$
  $t=\frac{\alpha_8}{\alpha_{11}}$ and
  $r=\frac{\alpha_1}{\alpha_{8}},$
  we have the family of representatives $\langle \nabla_{1}+\nabla_{8}+\alpha\nabla_{10}+\nabla_{11} \rangle$;

  \item $\alpha _9\neq 0,\alpha_{10}=0,$ then choosing
  $z=0,$
  $x=\frac{\alpha_9}{\alpha_{11}},$
  $r=1,$
  $y=-\frac{\alpha_8}{\alpha_{11}}$ and
  $t=\frac{\alpha _1\alpha _9}{\alpha_{11}^2},$ we have the representative $\langle \nabla_{1}+\nabla_{9}+\nabla_{11} \rangle$;

  \item $\alpha _9\neq 0,\alpha_{10}\neq 0,$ then choosing
  $z=0,$
  $x=\frac{\alpha_9}{\alpha_{11}},$
  $y=-\frac{\alpha_8}{\alpha_{11}},$ $t=\frac{\sqrt{\alpha_1\alpha_{10}}}{\alpha_{11}},$ and $r=\frac{\alpha_9\sqrt{\alpha_1}}{\alpha_{11}\sqrt\alpha_{10}},$ we have $\langle \nabla_{1}+\nabla_{9}+\nabla_{10}+\nabla_{11} \rangle$.
\end{enumerate}

\end{enumerate}

Summarizing, we have the following distinct orbits
\begin{center}
$\langle \nabla_{11}\rangle,$
$\langle \nabla_{10}+\nabla_{11} \rangle,$
$\langle \nabla_{8}+\nabla_{11}\rangle,$
$\langle \nabla_{8}+\nabla_{10}+\nabla_{11} \rangle,$
$\langle \nabla_{2}+\nabla_{11}\rangle,$
$\langle \nabla_{2}+\nabla_{8}+\nabla_{11} \rangle,$
$\langle \nabla_{2}+\nabla_{10}+\nabla_{11} \rangle,$
$\langle \nabla_{2}+\nabla_{8}+\nabla_{10}+\nabla_{11} \rangle,$
$\langle \nabla_{2}+\nabla_{8}+\nabla_{9}+\alpha\nabla_{10}+\nabla_{11} \rangle,$
$\langle \nabla_{1}+\nabla_{11}\rangle,$
$\langle \nabla_{1}+\nabla_{10}+\nabla_{11} \rangle,$
$\langle \nabla_{1}+\nabla_{8}+\alpha \nabla_{10}+\nabla_{11} \rangle,$
$\langle \nabla_{1}+\nabla_{9}+\nabla_{11} \rangle,$
$\langle \nabla_{1}+\nabla_{9}+\nabla_{10}+\nabla_{11} \rangle.$
\end{center}

\subsubsection{The classification theorem}\label{secteoA}

Now we are ready to summarize all results related to the algebraic classification of complex $5$-dimensional nilpotent binary Leibniz algebras.

\begin{theoremA}
Let ${\bf B}$ be a complex  $5$-dimensional nilpotent binary Leibniz algebra.
Then ${\bf B}$ is a Leibniz  algebra or isomorphic to one algebra from the following list:

\begin{longtable}{llllllllll}
${\bf B}_{01}$ & $:$ &
$e_1 e_2 = e_3$&  $e_2 e_1=-e_3$& $e_3 e_4=e_5$& $e_4e_3=-e_5$ \\
${\bf B}_{02}$ &$:$ &
$e_1 e_2 = e_3$&  $e_2 e_1=-e_3$& $e_3 e_4=e_5$& $e_4e_3=-e_5$&$e_4e_4=e_5$\\
${\bf B}_{03}$ &$:$&
$e_1 e_2 = e_3$&  $e_2 e_1=-e_3$& $e_3 e_4=e_5$& $e_4e_1=e_5$&$e_4e_3=-e_5$ \\
${\bf B}_{04}$ &$:$ &
$e_1 e_2 = e_3$ &  $e_2 e_1=-e_3$& $e_3 e_4=e_5$ & $e_4e_1=e_5$&$e_4e_3=-e_5$&$e_4e_4=e_5$\\
${\bf B}_{05}$ &$:$ &
$e_1 e_2 = e_3+e_5$&  $e_2 e_1=-e_3$& $e_3 e_4=e_5$&$e_4e_3=-e_5$ \\
${\bf B}_{06}$ &$:$ &
$e_1 e_2 = e_3+e_5$&  $e_2 e_1=-e_3$ & $e_3 e_4=e_5$& $e_4e_1=e_5$&$e_4e_3=-e_5$\\
${\bf B}_{07}$ &$:$ &
$e_1 e_2 = e_3+e_5$&  $e_2 e_1=-e_3$& $e_3 e_4=e_5$&$e_4e_3=-e_5$& $e_4e_4=e_5$\\
${\bf B}_{08}$ &$:$&
$e_1 e_2 = e_3+e_5$&  $e_2 e_1=-e_3$& $e_3 e_4=e_5$& $e_4e_1=e_5$&$e_4e_3=-e_5$& $e_4e_4=e_5$\\
${\bf B}_{09}^\alpha$ &$:$&
$e_1 e_2 = e_3+e_5$&  $e_2 e_1=-e_3$& $e_3 e_4=e_5$& $e_4e_1=e_5$&\\
&&$e_4e_2=e_5$&$e_4e_3=-e_5$& $e_4e_4=\alpha e_5$\\
${\bf B}_{10}$ &$:$ &
$e_1 e_1 = e_5$&$e_1 e_2 = e_3$&  $e_2 e_1=-e_3$& $e_3 e_4=e_5$&$e_4e_3=-e_5$ \\
${\bf B}_{11}$ &$:$ &
$e_1 e_1 = e_5$&$e_1 e_2 = e_3$&  $e_2 e_1=-e_3$& $e_3 e_4=e_5$ &$e_4e_3=-e_5$& $e_4e_4=e_5$\\
${\bf B}_{12}^\alpha$ &$:$&
$e_1 e_1 = e_5$&$e_1 e_2 = e_3$&  $e_2 e_1=-e_3$& $e_3 e_4=e_5$&\\ &&$e_4e_1=e_5$&$e_4e_3=-e_5$& $e_4e_4= \alpha e_5$\\
${\bf B}_{13}$ &$:$ &
$e_1 e_1 = e_5$&$e_1 e_2 = e_3$&  $e_2 e_1=-e_3$& $e_3 e_4=e_5$&$e_4 e_2=e_5$&$e_4e_3=-e_5$ \\
${\bf B}_{14}$ &$:$&
$e_1 e_1 = e_5$&$e_1 e_2 = e_3$&  $e_2 e_1=-e_3$& $e_3 e_4=e_5$\\&&
$e_4 e_2=e_5$&$e_4e_3=-e_5$& $e_4e_4=e_5$
\end{longtable}
\end{theoremA}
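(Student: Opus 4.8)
The plan is to follow the Skjelbred--Sund central extension procedure, exactly as set up in the preliminary section, applied to the list of $4$-dimensional nilpotent Leibniz algebras that are $4$-generated (since central extensions preserve the number of generators and non-Leibniz binary Leibniz algebras must be at least three-generated, the only relevant base algebras are the $\mathfrak{N}_{0j}$ listed in the table, and among those only $\mathfrak{N}_{03}$ has a nonzero space $\mathrm{H}^2_{\rm BL}/\mathrm{H}^2_{\mathcal L}$). So the entire classification reduces to understanding the $\mathrm{Aut}(\mathfrak{N}_{03})$-orbits on ${\bf U}_1(\mathfrak{N}_{03})$, i.e.\ on those lines $\langle\theta\rangle$ in $\mathrm{H}^2_{\rm BL}(\mathfrak{N}_{03})$ with $\alpha_{11}\neq 0$ (the coefficient of $\nabla_{11}=[\Delta_{34}-\Delta_{43}]$), subject to the annihilator condition $\mathrm{Ann}(\theta)\cap\mathrm{Ann}(\mathfrak{N}_{03})=0$.

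The key steps, in order, are: (i) record the automorphism group $\mathrm{Aut}(\mathfrak{N}_{03})$ as the stated $4\times4$ matrices and compute the induced action on $\mathrm{H}^2_{\rm BL}(\mathfrak{N}_{03})$ via the congruence $\phi^T M(\alpha)\phi$, reading off the transformation formulas for $\alpha_1^*,\dots,\alpha_{11}^*$; (ii) use the freedom in the parameters $u,v,p,q$ (which appear only in the "lower" components) to normalise $\alpha_3=\alpha_4=\alpha_6=\alpha_7=0$, which is possible precisely because $\alpha_{11}\neq 0$; (iii) with these gone, the remaining action on $(\alpha_1,\alpha_2,\alpha_5)$ is the standard action on binary quadratic forms via $\mathrm{GL}_2$ acting on the $2\times2$ block $\left(\begin{smallmatrix}x&z\\y&t\end{smallmatrix}\right)$, and the action on $(\alpha_8,\alpha_9)$ is a linear action twisted by $r$, while $\alpha_{10}$ scales by $r^2$ and $\alpha_{11}$ by $r(tx-yz)$; (iv) do the orbit analysis by a careful but elementary case split according to whether $(\alpha_1,\alpha_5)$ is zero, whether $\alpha_2$ is zero, whether the discriminant $\alpha_2^2-4\alpha_1\alpha_5$ vanishes, whether $(\alpha_8,\alpha_9)=(0,0)$, and whether $\alpha_{10}=0$; (v) check that each representative actually satisfies the annihilator condition (ruling out split extensions) and assemble the canonical forms; (vi) finally, translate each surviving orbit representative $\langle\theta\rangle$ back into an explicit multiplication table on $\mathfrak{N}_{03}\oplus\langle e_5\rangle$, giving the algebras ${\bf B}_{01},\dots,{\bf B}_{14}$, and verify pairwise non-isomorphism (which is automatic from the orbit bookkeeping, but one should double-check invariants such as $\dim\mathfrak{L}^2$, $\dim\mathrm{Ann}$, etc., and in particular separate the one-parameter families ${\bf B}_{09}^\alpha$ and ${\bf B}_{12}^\alpha$ into genuinely distinct members).

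I would expect the main obstacle to be step (iv) combined with step (vi): the orbit analysis has many branches (the quadratic-form normalisation interacts with the $(\alpha_8,\alpha_9,\alpha_{10})$ normalisation through the shared parameters $x,y,z,t,r$, so one must be disciplined about which automorphism parameters have already been "spent"), and it is easy to either miss a case or to produce two representatives that are secretly in the same orbit. The delicate point is the sub-case $(\alpha_1,\alpha_5)\neq(0,0)$ with $\alpha_2^2=4\alpha_1\alpha_5$ (the degenerate discriminant), where one can no longer diagonalise the quadratic form and must instead reduce it to a single square $\alpha_1^* = x^2\alpha_1$; here the residual stabiliser is larger and one has to track how it acts on $(\alpha_8,\alpha_9,\alpha_{10})$ to avoid spurious parameters — this is exactly where the family $\langle\nabla_1+\nabla_8+\alpha\nabla_{10}+\nabla_{11}\rangle$ (yielding ${\bf B}_{12}^\alpha$) arises, and one must confirm the $\alpha$ there is a true modulus. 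Everything else is a routine (if lengthy) linear-algebra computation that the central extension machinery reduces to bookkeeping, and the final theorem then follows by combining this list with the known classification of $5$-dimensional nilpotent Leibniz algebras from \cite{leib5}.
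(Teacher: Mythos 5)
Your proposal follows exactly the paper's own route: the Skjelbred--Sund central extension machinery reduces the problem to one-dimensional extensions of the $3$-generated $4$-dimensional nilpotent Leibniz algebras, of which only $\mathfrak{N}_{03}$ has $\mathrm{H}^2_{\rm BL}\neq\mathrm{H}^2_{\mathcal L}$, and the paper then performs precisely the normalisation $\alpha_3=\alpha_4=\alpha_6=\alpha_7=0$ (using $u,v,p,q$ and $\alpha_{11}\neq0$) followed by the case split on $(\alpha_1,\alpha_5)$, $\alpha_2$, the discriminant $\alpha_2^2-4\alpha_1\alpha_5$, and $(\alpha_8,\alpha_9,\alpha_{10})$ that you describe, arriving at the same $14$ orbit representatives. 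The delicate points you flag (the degenerate-discriminant branch producing $\langle\nabla_1+\nabla_8+\alpha\nabla_{10}+\nabla_{11}\rangle$, and the bookkeeping of already-spent automorphism parameters) are exactly where the paper's computation concentrates, so your plan matches the published proof in both structure and substance.
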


\subsection{Classification of $4$-dimensional nilpotent mono  Leibniz algebras }
For   algebras  constructed by the method of central extension, the number of generators do not change.
So the generators of our algebra that we are looking  for must be equal to $2$. So we construct nilpotent non-Leibniz mono  Leibniz algebras through one-dimensional central extensions of
$3$-dimensional $2$-generated nilpotent Leibniz algebras.
Such algebras are as follows.

\begin{longtable}{ll llllll}
\hline
\multicolumn{8}{c}{{\bf The list of 2-step nilpotent 3-dimensional mono Leibniz algebras}}  \\
\hline

${\mathcal N}_{01}$ & $:$&$e_1e_1 = e_2$ &&&&\\
${\rm H}_{\rm BL}^2({\mathcal N}_{01})$ & $=$ & \multicolumn{6}{l}{$
\Big\langle
[\Delta_{13}],[\Delta_{21}],[\Delta_{31}],[\Delta_{33}]
\Big\rangle $}\\

 ${\rm H}_{\rm ML}^2({\mathcal N}_{01})$ & $=$ & \multicolumn{6}{l}{${\rm H}_{\rm BL}^2({\mathcal N}_{01})\oplus \langle [\Delta _{23}]\rangle$}\\

\hline
${\mathcal N}_{02}$ & $:$ & $e_1e_2=e_3$ & $e_2e_1=-e_3$ &&& \\

${\rm H}_{\rm BL}^2({\mathcal N}_{02})$ & $=$ & \multicolumn{6}{l}{$
\Big\langle
[\Delta _{11}],[\Delta _{12}],[\Delta_{13}-\Delta _{31}],[\Delta_{22}],[\Delta _{23}-\Delta_{32}]
\Big\rangle $}\\

${\rm H}_{\rm ML}^2({\mathcal N}_{02})$ & $=$ & \multicolumn{6}{l}{${\rm H}_{\rm BL}^2({\mathcal N}_{02})\oplus \langle [\Delta_{31}], [\Delta_{32}], [\Delta_{33}]\rangle$}\\


\hline
${\mathcal N}_{03}^{\alpha}$ & $:$ & $e_1e_1= e_3$ & $e_1e_2=e_3$ & $e_2e_2=\alpha e_3$  &&\\

${\rm H}_{\rm BL}^2({\mathcal N}_{03}^{\alpha\neq0})$ & $=$ & \multicolumn{6}{l}{$
\Big\langle
[\Delta_{12}],[\Delta_{21}],[\Delta_{22}]
\Big\rangle $}\\

${\rm H}_{\rm ML}^2({\mathcal N}_{03}^{\alpha\neq0})$ & $=$ & \multicolumn{6}{l}{${\rm H}_{\rm BL}^2({\mathcal N}_{03}^{\alpha\neq0})\oplus \langle [\Delta_{31}+\Delta_{32}], [\Delta_{32}]\rangle$}\\

${\rm H}_{\rm BL}^2({\mathcal N}_{03}^{0})$ & $=$ & \multicolumn{6}{l}{$
\Big\langle
[\Delta_{12}],[\Delta_{21}],[\Delta_{22}], [\Delta_{31}+\Delta_{32}]
\Big\rangle $}\\

${\rm H}_{\rm ML}^2({\mathcal N}_{03}^{0})$ & $=$ & \multicolumn{6}{l}{${\rm H}_{\rm BL}^2({\mathcal N}_{03}^{0})\oplus \langle [\Delta_{32}]\rangle$}\\


\hline
${\mathcal N}_{04}$ & $:$ & $e_1e_1= e_3$ & $e_2e_2=e_3$ &&\\

${\rm H}_{\rm BL}^2({\mathcal N}_{04})$ & $=$ & \multicolumn{6}{l}{
$\Big\langle
[\Delta_{12}],[\Delta_{21}],[\Delta_{22}]
\Big\rangle $}\\

${\rm H}_{\rm ML}^2({\mathcal N}_{04})$ & $=$ & \multicolumn{6}{l}{${\rm H}_{\rm BL}^2({\mathcal N}_{04})\oplus \langle [\Delta_{31}], [\Delta_{32}]\rangle$}\\


\hline
\end{longtable}


\subsubsection{Central extensions of ${\mathcal N}_{01}$}
	Let us use the following notations:
	\begin{longtable}{lllllll}
	$\nabla_1 = [\Delta_{13}],$ & $\nabla_2 = [\Delta_{21}],$ &$\nabla_3 = [\Delta_{31}],$ & $\nabla_4 = [\Delta_{23}],$ & $\nabla_5 = [\Delta_{33}].$
	\end{longtable}	
	
Take $\theta=\sum\limits_{i=1}^5\alpha_i\nabla_i\in {\rm H_{\rm ML}^2}({\mathcal N}_{01}).$
	The automorphism group of ${\mathcal N}_{01}$ consists of invertible matrices of the form
	$$\phi=
	\begin{pmatrix}
	x &  0  & 0\\
	y &  x^2& z\\
	u &  0  & t\\
    \end{pmatrix}.$$

 Since
	$$
	\phi^T\begin{pmatrix}
	0 & 0  & \alpha_1\\
	\alpha_2  & 0 & \alpha_4\\
	\alpha_3&  0    & \alpha_5\\
	\end{pmatrix} \phi=	\begin{pmatrix}
	\alpha^* & 0  & \alpha_1^*\\
	\alpha_2^*& 0 & \alpha_4^*\\
	\alpha_3^*&  0    & \alpha_5^*\\
	\end{pmatrix},
	$$
	 we have that the action of ${\rm Aut} ({\mathcal N}_{01})$ on the subspace
$\langle \sum\limits_{i=1}^5\alpha_i\nabla_i  \rangle$
is given by
$\langle \sum\limits_{i=1}^5\alpha_i^{*}\nabla_i\rangle,$
where
\begin{longtable}{lcl}
$\alpha^*_1$&$=$& $t(x\alpha_1+y\alpha_4+u \alpha_5),$ \\
$\alpha^*_2$&$=$&$x^2(x\alpha_2+u \alpha_4),$ \\
$\alpha^*_3$&$=$&$x z \alpha _2+t x \alpha _3+u z \alpha _4+t u \alpha _5,$ \\
$\alpha_4^*$&$=$&$t x^2 \alpha _4,$\\
$\alpha_5^*$&$=$&$t(z \alpha_4+t \alpha_5).$\\
\end{longtable}

Since ${\rm H}_{\rm ML}^2({\mathcal N}_{01})={\rm H}_{\rm BL}^2({\mathcal N}_{01})\oplus \langle [\Delta _{23}]\rangle$ and we are interested only in new algebras, we have  $\alpha_{4}\neq 0.$
It is easy to see that, choosing
$y={x(\alpha _2 \alpha _5- \alpha _1 \alpha _4)}\alpha _4^{-2}$,
$u=-{x \alpha _2}{\alpha _4^{-1}}$ and
$z =-{t \alpha _5}{\alpha _4^{-1}},$
we have $\alpha^*_1=\alpha^*_2=\alpha^*_5=0.$ Hence, we can suppose $\alpha_1=\alpha_2=\alpha_5=0$  and we
have the following
$$\alpha^*_3=tx\alpha_3,\quad \alpha_4^*=t x^2 \alpha _4.$$

\begin{enumerate}
    \item If $\alpha_3=0,$ then we have the representative
	$\langle \nabla_4 \rangle.$
	\item If $\alpha_3\neq 0,$ then by choosing $x=\frac{\alpha_3}{\alpha_4},$  we have the representative
	$\langle \nabla_3+\nabla_4\rangle.$
\end{enumerate}

Thus, we have the following distinct orbits
$$\langle \nabla_4 \rangle, \quad \langle \nabla_3+\nabla_4\rangle,$$
which gives the following new algebras:

\begin{longtable}{llllllllllllll}
$\mathbb{M}_{01}$ & $: $ & $e_1e_1=e_2$ & $e_2e_3=e_4$ \\
$\mathbb{M}_{02}$ & $: $ & $e_1e_1=e_2$ & $e_2e_3=e_4$ & $e_3e_1=e_4$\\
\end{longtable}

\subsubsection{Central extensions of ${\mathcal N}_{02}$}
	Let us use the following notations:
	\begin{longtable}{lllllll}
	$\nabla_1 = [\Delta_{11}],$ & $\nabla_2 = [\Delta_{12}],$ &$\nabla_3 = [\Delta_{13}-\Delta_{31}],$ & $\nabla_4 = [\Delta_{22}],$ \\
	$\nabla_5 = [\Delta_{23}-\Delta_{32}],$  & $\nabla_6 = [\Delta_{31}]$, &
	$\nabla_7 = [\Delta_{32}]$, & $\nabla_8 = [\Delta_{33}].$\\
	\end{longtable}	
	
Take $\theta=\sum\limits_{i=1}^8\alpha_i\nabla_i\in {\rm H_{\rm ML}^2}({\mathcal N}_{02}).$
	The automorphism group of ${\mathcal N}_{02}$ consists of invertible matrices of the form
	$$
	\phi=
	\left(
\begin{array}{ccc}
 x & z & 0\\
 y & t & 0 \\
 u & q & xt-yz
 \end{array}
\right).$$
	
	Since
	$$
	\phi^T\begin{pmatrix}
	\alpha_1 &  \alpha_2 & \alpha_3\\
	0        & \alpha_4  & \alpha_5\\
	\alpha_6-\alpha_3& \alpha_7-\alpha_5 & \alpha_8 \\
	\end{pmatrix} \phi=
    \begin{pmatrix}
	\alpha_1^* &  \alpha_2^*-\alpha^* & \alpha_3^*\\
	\alpha^*        & \alpha_4^*  & \alpha_5^*\\
	\alpha_6^*-\alpha_3^*& \alpha_7^*-\alpha_5^* & \alpha_8^* \\
	\end{pmatrix},
	$$
	 we have that the action of ${\rm Aut} ({\mathcal N}_{02})$ on the subspace
$\langle \sum\limits_{i=1}^8\alpha_i\nabla_i  \rangle$
is given by
$\langle \sum\limits_{i=1}^8\alpha_i^{*}\nabla_i\rangle,$
where

\begin{longtable}{lcllcllcl}
$\alpha^*_1$&$=$&$x^2 \alpha_1+x y \alpha_2+y^2 \alpha_4+u x \alpha_6+u y \alpha_7+u^2 \alpha_8,$&\\
$\alpha^*_2$&$=$&$2 x z \alpha _1+(t x+y z) \alpha _2+2 t y \alpha _4+q x \alpha _6+u z \alpha _6+t u \alpha _7+q y \alpha _7+2 q u \alpha _8$\\
$\alpha^*_3$&$=$&$(xt-yz) \left(x \alpha _3+y \alpha _5+u \alpha _8\right)$,& &\\
$\alpha_4^*$&$=$&$z^2 \alpha _1+t z \alpha _2+t^2 \alpha _4+q z \alpha _6+q t \alpha _7+q^2 \alpha _8$,\\
$\alpha_5^*$&$=$&$(xt-yz) \left(z \alpha _3+t \alpha _5+q \alpha _8\right)$,& &\\
$\alpha_6^*$&$=$&$(xt-yz) \left(x \alpha _6+y \alpha _7+2u \alpha _8\right)$,\\
$\alpha_7^*$&$=$&$(xt-yz) \left(z \alpha _6+t \alpha _7+2q \alpha _8\right)$,\\
$\alpha_8^*$&$=$&$(xt-yz)^2 \alpha _8$.\\
\end{longtable}

Since ${\rm H}_{\rm ML}^2({\mathcal N}_{02})={\rm H}_{\rm BL}^2({\mathcal N}_{02})\oplus \langle [\Delta _{31}],[\Delta _{32}],[\Delta_{33}]\rangle$ and we are interested only in new algebras, we have  $(\alpha_{6},\alpha_7,\alpha_8)\neq(0,0,0).$

\begin{enumerate}
    \item  Let $\alpha_{8}=0,$ then $(\alpha_6,\alpha_7)\neq 0$ and without loss of generality (maybe with an action of a suitable $\phi$), we can suppose $\alpha_6\neq 0$. Then we consider following subcases:

  \begin{enumerate}
  \item if $\alpha_{5}=0,\alpha_4=0,\alpha_3=0,$ then choosing
 \begin{center} $x=1,$
  $y=0,$
  $u=-{\alpha_1}{\alpha^{-1}_6}$
and
$q=t (2 \alpha_1 \alpha_7-\alpha_2 \alpha_6) \alpha_6^{-2},$      \end{center}
  we have the representatives
  $\langle  \nabla_{4}+ \nabla_{6} \rangle$
  and
  $\langle   \nabla_{6} \rangle,$
  depending on $\alpha_7 (\alpha_1 \alpha_7-\alpha_2 \alpha_6)\neq 0$ or not;

  \item if $\alpha_{5}=0,\alpha_4=0,\alpha_3\neq0,\alpha_{7}=0,$ then choosing
 \begin{center}
 $x=\alpha_6,$
  $y=0,$
  $z=0,$
  $t=\alpha_6,$
  $u=-\alpha_1$
and
$q=- \alpha_2,$
 \end{center}
  we have the family of representatives
  $\langle \alpha \nabla_{3}+ \nabla_{6} \rangle_{\alpha\neq0};$

 \item if $\alpha_{5}=0,\alpha_4=0,\alpha_3\neq0,\alpha_{7}\neq0,$ then choosing
 \begin{center}
 $x=0,$
  $z=y\alpha_3^{-1}\alpha_7,$
  $t=-y\alpha_3^{-1}\alpha_6,$
  $u=0$
and
$q=- y\alpha_3^{-1}\alpha_2,$
 \end{center}
  we have the representatives
  $\langle  \nabla_{4}+ \nabla_{5}+ \nabla_{6} \rangle$
  and
  $\langle   \nabla_{5}+\nabla_{6} \rangle,$
  depending on $\alpha_1 \alpha_7\neq\alpha_2 \alpha_6 $ or not:

 \item if $\alpha_{5}=0,\alpha_4\neq0,\alpha_3=0,$ then choosing
  $z=0,$
  $t=-\alpha_7$
    and
    $q= \alpha_4,$
  we have $\alpha_4^* = \alpha_5^* = \alpha_8^*=0$ and it gives us previous considered case;

 \item  if $\alpha_{5}=0,\alpha_4\neq0,\alpha_3\neq0,$ then choosing
   $x=1,$
   $y=0,$
   $z=1,$
   and
    $t=1,$
  we have $\alpha_5^*\neq 0$ and this case will be considered below;

 \item if $\alpha_{5}\neq0, \alpha_{3}=0,$
 then choosing
  $y=0,$
  $z=-x\alpha_7\alpha_5^{-1},$
  $t=x\alpha_6\alpha_5^{-1},$
$u=-x\alpha_1\alpha_6^{-1}$
and
$q=-x (2 \alpha_1 \alpha_7-\alpha_2 \alpha_6)\alpha_5^{-1} \alpha_6^{-1},$
  we have the  representatives
  $\langle \nabla_{4}+ \nabla_{5}+ \nabla_{6} \rangle$
  and
  $\langle   \nabla_{5}+\nabla_{6} \rangle,$
  depending on $\alpha_4 \alpha_6^2\neq\alpha_7 (\alpha_2 \alpha_6-\alpha_1 \alpha_7)$ or not;

  \item if $\alpha_{5}\neq0, \alpha_{3}\neq0,
\alpha_5 \alpha_6=\alpha_3 \alpha_7,$
  then choosing
   $x=\alpha_6,$
   $y=0,$
  $z=-t \alpha_5 \alpha_3^{-1},$
  $u=-\alpha_1$
    and
    $q=-t(2\alpha_1 \alpha_5-  \alpha_2 \alpha_3)\alpha_3^{-1} \alpha_6^{-1},$
  we have the families  of representatives
  $\langle  \alpha \nabla_{3}+\nabla_{4}+   \nabla_{6} \rangle_{\alpha\neq 0}$
  and
  $\langle  \alpha \nabla_{3}+   \nabla_{6} \rangle_{\alpha\neq 0},$
  depending on
  $\alpha_3^2 \alpha_4+\alpha_1 \alpha_5^2\neq\alpha_2 \alpha_3 \alpha_5$ or not;

     \item if $\alpha_{5}\neq0, \alpha_{3}\neq0,
\alpha_5 \alpha_6\neq \alpha_3 \alpha_7,$
  then by choosing
   $x=-y\alpha_5\alpha_3^{-1},$
   we have $\alpha_3^*=0,$ which gives the previously considered case.
    \end{enumerate}

\item Let $\alpha_{8}\neq0$. Then putting
$u=-\frac{x \alpha _6+y \alpha _7}{2 \alpha _8},$ $q=-\frac{z \alpha _6+t \alpha _7}{2 \alpha _8},$
we have $\alpha^*_6=\alpha^*_7=0.$
Thus, without loss of generality, we can suppose $\alpha_6=\alpha_7=0$ and obtain
\begin{longtable}{lcllcl}
$\alpha^*_1$&$=$&$x^2 \alpha _1+xy\alpha _2+y^2 \alpha_4,$ &
$\alpha^*_2$&$=$&$2 x z \alpha _1+(t x+y z) \alpha _2+2 t y \alpha _4,$\\
$\alpha^*_3$&$=$&$(t x-y z) \left(x \alpha _3+y \alpha _5\right),$ &
$\alpha_4^*$&$=$&$z^2 \alpha _1+zt\alpha_2+t^2\alpha_4,$\\
$\alpha_5^*$&$=$&$(t x-y z) \left(z \alpha _3+t \alpha _5\right),$ &
$\alpha_8^*$&$=$&$(xt-yz)^2 \alpha_8.$
\end{longtable}

\begin{enumerate}
    \item Let $(\alpha_{3},\alpha_{5})=(0,0)$. Then we have the following subcases:


\begin{enumerate}
\item $(\alpha_{1},\alpha_2,\alpha_4)=(0,0,0),$ then we have the representative $\langle \nabla_{8} \rangle$.

\item $(\alpha_{1},\alpha_2,\alpha_4)\neq (0,0,0),$ then
without loss of generality (maybe with an action of a suitable $\phi$),
we can suppose $\alpha_1\neq 0$.

  \begin{enumerate}
    \item $\alpha _2^2\neq4\alpha_1 \alpha_4,$ then choosing $x=-\frac{\alpha _2+\sqrt{\alpha _2^2-4 \alpha _1 \alpha _4}}{2 \alpha _1},$ $y=1,$ $z=\frac{-\alpha _2+\sqrt{\alpha _2^2-4 \alpha _1 \alpha _4}}{2 \alpha_8}$, $t=\frac {\alpha_1}{\alpha_8},$ we have
the representative $\langle \nabla_{2}+\nabla_{8} \rangle.$

    \item $\alpha _2^2=4\alpha_1 \alpha_4,$ then choosing $z=-\frac{\alpha _2}{2 \alpha _1}\sqrt{\frac{\alpha_1}{{\alpha_8}}}$, $t= \sqrt{\frac{\alpha_1}{{\alpha_8}}},$ we have the representative
$\langle \nabla_{1}+\nabla_{8} \rangle.$
 \end{enumerate}

\end{enumerate}

\item  Let $(\alpha_{3},\alpha_{5})\neq(0,0)$, then
without loss of generality (maybe with an action of a suitable $\phi$)
one can suppose $\alpha_3\neq 0$ and choosing $z=-\frac{t \alpha_5}{\alpha_3},$ we have
$\alpha^*_5=0$.


\begin{enumerate}
\item $\alpha_4=0,\alpha_2=0,$ then choosing $t=\alpha_3\alpha_8^{-1},$ we have the family of  representatives $\langle \alpha  \nabla_{1}+\nabla_{3}+\nabla_{8} \rangle$;

\item $\alpha_4=0,\alpha_2\neq 0,$ then choosing
$x=\alpha_2\alpha_3^{-1},$
$y=-\alpha_1\alpha_2^{-1},$
$t=\alpha_3\alpha_8^{-1},$ we have the representative $\langle \nabla_{2}+\nabla_{3}+\nabla_{8} \rangle$.

\item $\alpha_4\neq 0,$ then choosing
 $x=\sqrt{\frac{\alpha_4}{\alpha_8}},$
 $y=-\frac{ \alpha _2}{2 \sqrt{\alpha _4\alpha_8}}$
 and
$t=\frac{\alpha_3}{\alpha_8},$ we have the family of  representatives $\langle \alpha \nabla_{1}+\nabla_{3}+\nabla_{4}+\nabla_{8} \rangle$.
\end{enumerate}
\end{enumerate}
\end{enumerate}

Summarizing, all considered case we have the following distinct orbits
\begin{center}
$\langle \alpha\nabla_3+\nabla_6\rangle,$ \
$\langle \alpha\nabla_3+\nabla_4+\nabla_6\rangle,$ \
$\langle \nabla_5+\nabla_6\rangle,$ \
$\langle \nabla_4+\nabla_5+\nabla_6\rangle,$ \
$\langle \nabla_8\rangle,$ \
$\langle \nabla_2+\nabla_8\rangle,$\
$\langle \nabla_1+\nabla_8\rangle,$ \
$\langle \alpha\nabla_1+\nabla_3+\nabla_8\rangle,$ \
$\langle \nabla_2+\nabla_3+\nabla_8\rangle,$\
$\langle \alpha\nabla_1+\nabla_3+\nabla_4+\nabla_8\rangle,$
\end{center}
which gives the following new algebras:
\begin{longtable}{llllllllllllll}
$\mathbb{M}_{03}^\alpha$ & $: $ & $e_1e_2=e_3$ & $e_1e_3=\alpha e_4$ & $e_2e_1=-e_3$ & \multicolumn{2}{l}{ $e_3e_1=(1-\alpha)e_4$} \\
$\mathbb{M}_{04}^\alpha$ & $: $ & $e_1e_2=e_3$ & $e_1e_3=\alpha e_4$ & $e_2e_1=-e_3$ & $e_2e_2=e_4$ & \multicolumn{2}{l}{$e_3e_1=(1-\alpha)e_4$} \\
$\mathbb{M}_{05}$ & $: $ & $e_1e_2=e_3$ & $e_2e_1=-e_3$ & $e_2e_3=e_4$ & $e_3e_1=e_4$ & $e_3e_2=-e_4$ \\
$\mathbb{M}_{06}$ & $: $ & $e_1e_2=e_3$ & $e_2e_1=-e_3$ & $e_2e_2=e_4$ & $e_2e_3=e_4$ & $e_3e_1=e_4$ & $e_3e_2=-e_4$ \\
$\mathbb{M}_{07}$ & $: $ & $e_1e_2=e_3$ & $e_2e_1=-e_3$ & $e_3e_3=e_4$ \\
$\mathbb{M}_{08}$ & $: $ & $e_1e_2=e_3+e_4$ & $e_2e_1=-e_3$ & $e_3e_3=e_4$ \\
$\mathbb{M}_{09}$ & $: $ & $e_1e_1=e_4$ & $e_1e_2=e_3$ & $e_2e_1=-e_3$ & $e_3e_3=e_4$ \\
$\mathbb{M}_{10}^\alpha$ & $: $ & $e_1e_1=\alpha e_4$ & $e_1e_2=e_3$ & $e_1e_3=e_4$ & $e_2e_1=-e_3$ & $e_3e_1=-e_4$ & $e_3e_3=e_4$ \\
$\mathbb{M}_{11}$ & $: $ & $e_1e_2=e_3+e_4$ & $e_1e_3=e_4$ & $e_2e_1=-e_3$ & $e_3e_1=-e_4$ & $e_3e_3=e_4$ \\
$\mathbb{M}_{12}^\alpha$ & $: $ & $e_1e_1=\alpha e_4$ & $e_1e_2=e_3$ & $e_1e_3=e_4$ & $e_2e_1=-e_3$ \\
& & $e_2e_2=e_4$& $e_3e_1=-e_4$ & $e_3e_3=e_4$ \\
\end{longtable}

\subsubsection{Central extensions of ${\mathcal N}_{03}^{\alpha\neq0}$}
	Let us use the following notations:
	\begin{longtable}{lllllll}
	$\nabla_1 = [\Delta_{12}],$ & $\nabla_2 = [\Delta_{21}],$ &$\nabla_3 = [\Delta_{22}],$ & $\nabla_4 = [\Delta_{31}+\Delta_{32}],$ & $\nabla_5 = [\Delta_{32}].$
	\end{longtable}	
	
Take $\theta=\sum\limits_{i=1}^5\alpha_i\nabla_i\in {\rm H_{\rm ML}^2}({\mathcal N}_{03}^{\alpha \neq0 }).$
	The automorphism group of ${\mathcal N}_{03}^{\alpha \neq 0}$ consists of invertible matrices of the form
	$$\phi=
	\begin{pmatrix}
	x &  -\alpha y  & 0\\
	y &  x+y & 0\\
	z &  t  & x^2+xy+\alpha y^2\\
    \end{pmatrix}.$$
	
 Since
	$$
	\phi^T\begin{pmatrix}
	0 & \alpha_1  & 0\\
	\alpha_2  & \alpha_3 & 0\\
	\alpha_4 &  \alpha_4+\alpha_5 & 0\\
	\end{pmatrix} \phi=	\begin{pmatrix}
	\alpha^* & \alpha_1^*+\alpha^* & 0\\
	\alpha_2^*  & \alpha_3^*+\alpha \alpha^* & 0\\
	\alpha_4^* &  \alpha_4^*+\alpha_5^* & 0\\
	\end{pmatrix},
	$$
	 we have that the action of ${\rm Aut} ({\mathcal N}_{03}^{\alpha \neq 0})$ on the subspace
$\langle \sum\limits_{i=1}^5\alpha_i\nabla_i  \rangle$
is given by
$\langle \sum\limits_{i=1}^5\alpha_i^{*}\nabla_i\rangle,$
where
\begin{longtable}{lcl}
$\alpha^*_1$&$=$& $x^2\alpha_1-y(x+y\alpha)\alpha_2+xy\alpha_3-yz\alpha\alpha_4+xz\alpha _5,$ \\
$\alpha^*_2$&$=$&$-y^2\alpha\alpha_1+(x+y)(x\alpha_2+y\alpha_3)+t(x+y)\alpha _4+ty \alpha_5,$ \\
$\alpha^*_3$&$=$&$-y(2x+y))\alpha(\alpha_1+\alpha_2)+((x+y)^2-y^2\alpha)\alpha_3+$\\
&&$(t (x+y)-(t y+x z+y z) \alpha)\alpha_4+(t(x+y)-yz\alpha)\alpha _5,$\\
$\alpha_4^*$&$=$&$(x^2+x y+y^2 \alpha)((x+y)\alpha _4+y \alpha _5),$\\
$\alpha_5^*$&$=$&$(x^2+x y+y^2\alpha)(x\alpha_5-y\alpha \alpha _4).$\\
\end{longtable}

Since ${\rm H}_{\rm ML}^2({\mathcal N}_{03}^{\alpha\neq 0})={\rm H}_{\rm BL}^2({\mathcal N}_{03}^{\alpha\neq 0})\oplus \langle [\Delta_{31}], [\Delta_{32}]\rangle$ and we are interested only in new algebras, we have  $(\alpha_{4},\alpha_5) \neq (0,0).$ Without loss of generality, one can assume $\alpha_4\neq0.$

\begin{enumerate}
    \item $\alpha\alpha _4^2+\alpha_4\alpha_5+\alpha_5^2\neq 0,$ then choosing
\begin{center}    $x=-\frac{y(\alpha _4+\alpha_5)}{\alpha _4},\ t=\frac{y(\alpha  \alpha _1 \alpha _4(\alpha _4+2 \alpha_5)+\alpha \alpha _2 \alpha _4(\alpha_4+2\alpha_5)+\alpha_3(\alpha _5^2-\alpha  \alpha _4^2))}{\alpha _4(\alpha  \alpha _4^2+\alpha _4 \alpha _5+\alpha _5^2)},$
    $z=\frac{y(\alpha_1(\alpha _4+\alpha_5)^2-\alpha _4(\alpha _2((\alpha-1 ) \alpha _4-\alpha _5)+\alpha _3(\alpha _4+\alpha _5)))}{\alpha _4(\alpha  \alpha _4^2+\alpha _4 \alpha _5+\alpha _5^2)},$\end{center}
we have
\begin{center}
    $\alpha^*_1=\alpha^*_3=\alpha^*_4=0,$ $\alpha^*_2=\frac{y^2(\alpha_5(\alpha _2(\alpha _4+\alpha _5)-\alpha_3 \alpha _4)-\alpha\alpha_1\alpha_4^2)}{\alpha _4^2},$ $\alpha^*_5=-\frac{y^3(\alpha  \alpha _4^2+\alpha _4 \alpha _5+\alpha _5^2){}^2}{\alpha _4^3}.$\end{center}

Hence, we have two representatives
$\langle \nabla_5\rangle$ and $\langle \nabla_2+ \nabla_5\rangle,$
depending on
$\alpha\alpha_1\alpha_4^2=\alpha_5(\alpha _2(\alpha _4+\alpha _5)-\alpha_3 \alpha _4)$ or not.

    \item $\alpha\alpha _4^2+\alpha_4\alpha_5+\alpha_5^2=0,$ i.e., $\alpha_5=-\frac{1}{2}(1 \pm \sqrt{1-4\alpha})\alpha_4,$ then choosing
\begin{center}
$y=0,$
$t=-\frac{x \alpha _2}{\alpha _4},$
$z=\frac{2x\alpha _1}{\alpha _4(1\pm \sqrt{1-4\alpha })},$
\end{center}    we have
\begin{center}
    $\alpha^*_1=\alpha^*_2=0,$ \
    $\alpha^*_3=-\frac{x^2(2\alpha \alpha _1 +2 \alpha  \alpha _2 -\alpha _3(1\pm \sqrt{1-4\alpha }))}{1\pm \sqrt{1-4\alpha}},$ \
    $\alpha^*_4=x^3 \alpha _4,$  \
    $\alpha^*_5=-\frac{x^3\alpha _4(1\pm \sqrt{1-4\alpha})}{2} .$
    \end{center}

Hence, we have four families of representatives
\begin{center}$\langle \nabla_3+\nabla_4-\frac{1}{2}(1\pm \sqrt{1-4\alpha}) \nabla_5 \rangle$ and
$\langle \nabla_4-\frac{1}{2}(1\pm \sqrt{1-4\alpha}) \nabla_5 \rangle,$
\end{center}
depending on
$2\alpha\alpha_4( \alpha_1 +\alpha _2)\neq\alpha _3(\alpha_4 \pm \sqrt{(1-4\alpha)\alpha _4^2})$ or not.

\end{enumerate}

\subsubsection{Central extensions of ${\mathcal N}_{03}^{0}$}
	Let us use the following notations:
	\begin{longtable}{lllllll}
	$\nabla_1 = [\Delta_{12}],$ & $\nabla_2 = [\Delta_{21}],$ &$\nabla_3 = [\Delta_{22}],$ & $\nabla_4 = [\Delta_{31}+\Delta_{32}],$ & $\nabla_5 = [\Delta_{32}]$
	\end{longtable}	
	
Take $\theta=\sum\limits_{i=1}^5\alpha_i\nabla_i\in {\rm H_{\rm ML}^2}({\mathcal N}_{03}^{0 }).$
	The automorphism group of ${\mathcal N}_{03}^{0}$ consists of invertible matrices of the form
	$$\phi=
	\begin{pmatrix}
	x &  0  & 0\\
	y-x &  y & 0\\
	z &  t  & xy\\
    \end{pmatrix}.$$

 Since
	$$
	\phi^T\begin{pmatrix}
	0 & \alpha_1  & 0\\
	\alpha_2  & \alpha_3 & 0\\
	\alpha_4 & \alpha_4+\alpha_5 & 0\\
	\end{pmatrix} \phi=	\begin{pmatrix}
	\alpha^* & \alpha_1^*+\alpha^* & 0\\
	\alpha_2^*  & \alpha_3^*& 0\\
	\alpha_4^* & \alpha_4^*+\alpha_5^* & 0\\
	\end{pmatrix},
	$$
	 we have that the action of ${\rm Aut} ({\mathcal N}_{03}^{0})$ on the subspace
$\langle \sum\limits_{i=1}^5\alpha_i\nabla_i  \rangle$
is given by
$\langle \sum\limits_{i=1}^5\alpha_i^{*}\nabla_i\rangle,$
where
\begin{longtable}{lcl}
$\alpha^*_1$&$=$& $x(x\alpha_1+(x-y)\alpha _2-x \alpha _3+y \alpha _3+z \alpha_5),$ \\
$\alpha^*_2$&$=$&$x y \alpha _2+y (y-x) \alpha _3+t(y\alpha _4+(y-x) \alpha_5),$ \\
$\alpha^*_3$&$=$&$y(y \alpha _3+t(\alpha _4+\alpha_5)),$\\
$\alpha_4^*$&$=$&$xy(y \alpha_4+(y-x) \alpha_5),$\\
$\alpha_5^*$&$=$&$x^2y\alpha_5.$\\
\end{longtable}

Since ${\rm H}_{\rm ML}^2({\mathcal N}_{03}^{0})={\rm H}_{\rm BL}^2({\mathcal N}_{03}^{0})\oplus \langle [\Delta_{32}]\rangle$ and we are interested only in new algebras, we have  $\alpha_5\neq 0.$ Then choosing  $z=(y(\alpha_2-\alpha_3)-x(\alpha_1+\alpha _2-\alpha_3))\alpha _5^{-1},$ we have $\alpha^*_1=0.$

\begin{enumerate}
    \item If $\alpha_5+\alpha_4=0,$ then choosing
    $t=y \left(x \alpha _2+(y-x) \alpha _3\right)x^{-1} \alpha^{-1} _5,$ we have
\begin{center}
    $\alpha^*_1=\alpha^*_2=0,$\
    $\alpha^*_3=y^2\alpha_3,$\
    $\alpha^*_4=-x^2y\alpha_5,$\
    $\alpha^*_5=x^2y\alpha_5.$
\end{center}
Hence, we have two representatives
$\langle \nabla_4-\nabla_5\rangle$ and
$\langle \nabla_3+\nabla_4- \nabla_5\rangle,$
depending on
$\alpha_3=0$ or not.

     \item If $\alpha_5+\alpha_4\neq 0,$ then choosing
     $y=\frac{x \alpha _5}{\alpha _4 +\alpha_5}, \ t=-\frac{x\alpha_3 \alpha_5}{(\alpha _4+\alpha_5)^2},$ we have
     \begin{center}
     $\alpha^*_1=0,$ \
     $\alpha^*_2=\frac{x^2\alpha_2 \alpha_5}{\alpha _4+\alpha_5},$ \ $\alpha^*_3=\alpha_4^*=0,$\
     $\alpha^*_5=\frac{x^3\alpha_5^2}{\alpha _4+\alpha_5}.$\end{center}
Hence, we have two representatives
$\langle \nabla_5\rangle$ and
$\langle \nabla_2+\nabla_5\rangle,$
depending on
$\alpha_2=0$ or not.
\end{enumerate}
Summarizing all cases of the central extension of the algebra ${\mathcal N}_{03}^{\alpha}$, we have the following distinct orbits
\begin{center}
$\langle \nabla_5\rangle,$ \
$\langle \nabla_2+\nabla_5\rangle, \quad \langle \nabla_4-\frac{1}{2}(1+\sqrt{1-4\alpha})\nabla_5\rangle,$ \
$\langle \nabla_3+\nabla_4 -\frac{1}{2}(1+\sqrt{1-4\alpha})\nabla_5\rangle,$ \
$\langle \nabla_4-\frac{1}{2}(1-\sqrt{1-4\alpha})\nabla_5\rangle_{\alpha\neq 0},$ \
$\langle \nabla_3+\nabla_4 -\frac{1}{2}(1-\sqrt{1-4\alpha})\nabla_5\rangle_{\alpha\neq0},$
\end{center}
which gives the following new algebras:

\begin{longtable}{llllllllllllll}
$\mathbb{M}_{13}^\alpha$ & $: $ & $e_1e_1=e_3$ & $e_1e_2=e_3$ & $e_2e_2=\alpha e_3$ & $e_3e_2=e_4$ \\
$\mathbb{M}_{14}^\alpha$ & $: $ & $e_1e_1=e_3$ & $e_1e_2=e_3$ & $e_2e_1=e_4$ & $e_2e_2=\alpha e_3$ & $e_3e_2=e_4$ \\
$\mathbb{M}_{15}^\alpha$ & $: $ & $e_1e_1=e_3$ & $e_1e_2=e_3$ & $e_2e_2=\alpha e_3$ & $e_3e_1=e_4$ & $e_3e_2=\frac{1}{2}(1-\sqrt{1-4\alpha})e_4$ \\
$\mathbb{M}_{16}^\alpha$ & $: $ & $e_1e_1=e_3$ & $e_1e_2=e_3$ & $e_2e_2=\alpha e_3+e_4$ & $e_3e_1=e_4$ & $e_3e_2=\frac{1}{2}(1-\sqrt{1-4\alpha})e_4$ \\
$\mathbb{M}_{17}^{\alpha\neq 0}$ & $:$ & $e_1e_1=e_3$ & $e_1e_2=e_3$ & $e_2e_2=\alpha e_3$ & $e_3e_1=e_4$ & $e_3e_2=\frac{1}{2}(1+\sqrt{1-4\alpha})e_4$ \\
$\mathbb{M}_{18}^{\alpha\neq 0}$ & $:$ & $e_1e_1=e_3$ & $e_1e_2=e_3$ & $e_2e_2=\alpha e_3+e_4$ & $e_3e_1=e_4$ & $e_3e_2=\frac{1}{2}(1+\sqrt{1-4\alpha})e_4$ \\
\end{longtable}

Note that algebras $\mathbb{M}_{17}^{4}(0)$ and $\mathbb{M}_{18}^{4}(0)$ are Leibniz algebras with one dimensional annihilator.

\subsubsection{Central extensions of ${\mathcal N}_{04}$}
	Let us use the following notations:
	\begin{longtable}{lllllll}
	$\nabla_1 = [\Delta_{12}],$ & $\nabla_2 = [\Delta_{21}],$ &$\nabla_3 = [\Delta_{22}],$ & $\nabla_4 = [\Delta_{31}],$ & $\nabla_5 = [\Delta_{32}].$
	\end{longtable}	
	
Take $\theta=\sum\limits_{i=1}^5\alpha_i\nabla_i\in {\rm H_{\rm ML}^2}({\mathcal N}_{04}).$
	The automorphism group of ${\mathcal N}_{04}$ consists of invertible matrices of the form
	$$\phi_1=
	\begin{pmatrix}
	x & y  & 0\\
	-y & x & 0\\
	z &  t  & x^2+y^2\\
    \end{pmatrix}, \quad \phi_2=
	\begin{pmatrix}
	x &  y  & 0\\
	y & -x & 0\\
	z &  t  & x^2+y^2\\
    \end{pmatrix}.$$

 Since
	$$
	\phi_1^T\begin{pmatrix}
	0 & \alpha_1  & 0\\
	\alpha_2  & \alpha_3 & 0\\
	\alpha_4 & \alpha_5 & 0\\
	\end{pmatrix} \phi_1=	\begin{pmatrix}
	\alpha^* & \alpha_1^* & 0\\
	\alpha_2^* & \alpha_3^*+\alpha^*& 0\\
	\alpha_4^* &\alpha_5^* & 0\\
	\end{pmatrix},
	$$
	 we have that the action of ${\rm Aut} ({\mathcal N}_{04})$ on the subspace
$\langle \sum\limits_{i=1}^5\alpha_i\nabla_i  \rangle$
is given by
$\langle \sum\limits_{i=1}^5\alpha_i^{*}\nabla_i\rangle,$
where
\begin{longtable}{lcl}
$\alpha^*_1$&$=$& $x^2 \alpha _1-y(y \alpha _2-z \alpha_4)-x(y \alpha _3-z \alpha _5),$ \\
$\alpha^*_2$&$=$&$-y^2 \alpha _1+x(x\alpha _2-y \alpha _3+t\alpha_4)-t y \alpha _5,$ \\
$\alpha^*_3$&$=$&$2xy(\alpha_1+\alpha_2)+(x^2-y^2)\alpha_3+(ty-xz)\alpha _4+(tx+yz)\alpha _5,$\\
$\alpha_4^*$&$=$&$(x^2+y^2)(x \alpha _4-y \alpha _5),$\\
$\alpha_5^*$&$=$&$(x^2+y^2)(y\alpha _4+x\alpha_5).$\\
\end{longtable}

Since ${\rm H}_{\rm ML}^2({\mathcal N}_{04})={\rm H}_{\rm BL}^2({\mathcal N}_{04})\oplus \langle [\Delta_{31}], [\Delta_{32}]\rangle$ and we are interested only in new algebras, we have  $(\alpha_4,\alpha_5)\neq (0,0).$ Moreover, without loss of generality, one can assume $\alpha_4\neq 0$. Then we have the following cases.

\begin{enumerate}
    \item If $\alpha_4^2+\alpha_5^2\neq 0,$ then choosing
    \begin{center}
        $y=-\frac{x \alpha _5}{\alpha _4},$ \
        $t=-\frac{x \alpha _2 \alpha _4^2+x \alpha _3 \alpha _4 \alpha _5-x \alpha _1 \alpha _5^2}{\alpha _4(\alpha _4^2+\alpha_5^2)}$, \  $z=\frac{x(\alpha _3 (\alpha _4^2-\alpha _5^2)-2(\alpha _1+\alpha _2) \alpha _4 \alpha _5)}{\alpha _4(\alpha _4^2+\alpha _5^2)},$
    \end{center}we have
\begin{center}
 $\alpha_1^*=\frac{x^2(\alpha _1 \alpha _4^2+\alpha _5 (\alpha _3 \alpha _4-\alpha_2\alpha_5))}{\alpha _4^2},$\
 $\alpha^*_2=\alpha_3^*=0,$\
 $\alpha^*_4=\frac{x^3(\alpha _4^2+\alpha _5^2)^2}{\alpha _4^3},$
$\alpha^*_5=0.$
\end{center}
Hence, we have two representatives
$\langle \nabla_4\rangle$ and
$\langle \nabla_1+\nabla_4\rangle,$
depending on
$\alpha _1 \alpha _4^2+\alpha _5 \alpha _3 \alpha _4=\alpha_2\alpha_5^2$ or not.

\item If $\alpha_4^2+\alpha_5^2=0,$ then choosing
\begin{center}
$t=\frac{y^2 \alpha _1-x^2 \alpha _2+x y \alpha _3}{ \alpha_4(x\pm i y)}, \ z=\frac{y \alpha _1\alpha _4((2 x^2+y^2)\pm i x y)+\alpha _3\alpha _4(x^3\mp iy^3)+x \alpha _2\alpha _4(xy\pm i(x^2+2 y^2))}{\alpha_4^2(x\pm i y)^2},$
\end{center}

we have
\begin{center}
$\alpha_1^*=\frac{(x^2+y^2)^2(\alpha _1+\alpha _2\mp \alpha _3)}{(x\pm i y)^2},$ \
$\alpha^*_2=\alpha_3^*=0,$\
 $\alpha^*_4=(x^2+y^2)(x\pm i y) \alpha _4,\ \alpha_5^*=\mp i (x^2+y^2)(x\mp i y)\alpha_4.$
\end{center}

Hence, we have two representatives
$\langle   \nabla_4\pm i\nabla_5\rangle$ and
$\langle  \nabla_1+\nabla_4\pm i\nabla_5\rangle,$
depending on
$\alpha _1+\alpha _2\mp \alpha _3=0$ or not.

Since the automorphism $\phi_2 =diag(1,-1, 1)$ acts as \begin{center}
$\phi_2(\nabla_4+i\nabla_5) =\nabla_4-i\nabla_5$	 and $\phi_2(\nabla_1+\nabla_4+i\nabla_5) =\nabla_1+\nabla_4-i\nabla_5,$
\end{center}
we get the following representatives of distinct orbits $\langle \nabla_4+i\nabla_5\rangle$ and $\langle \nabla_1+\nabla_4+i\nabla_5\rangle.$
\end{enumerate}

Summarizing, all considered cases we have the following distinct orbits
\begin{longtable} {llll}
$\langle \nabla_4\rangle,$ & $\langle \nabla_1+\nabla_4\rangle,$ & $\langle \nabla_4+i\nabla_5\rangle,$ &
$\langle \nabla_1+\nabla_4+i\nabla_5\rangle,$ \\
\end{longtable}
which gives the following new algebras:
\begin{longtable}{llllllllllllll}
$\mathbb{M}_{19}$ & $: $ & $e_1e_1=e_3$ & $e_2e_2=e_3$ & $e_3e_1=e_4$\\
$\mathbb{M}_{20}$ & $: $ & $e_1e_1=e_3$ & $e_1e_2=e_4$ & $e_2e_2=e_3$ & $e_3e_1=e_4$ \\
$\mathbb{M}_{21}$ & $: $ & $e_1e_1=e_3$ & $e_2e_2=e_3$ & $e_3e_1=e_4$ & $e_3e_2=i e_4$\\
$\mathbb{M}_{22}$ & $: $ & $e_1e_1=e_3$ & $e_1e_2=e_4$ & $e_2e_2=e_3$ & $e_3e_1=e_4$ & $e_3e_2=i e_4$\\
\end{longtable}

Now we are ready to summarize all results related to the algebraic classification of complex $4$-dimensional nilpotent mono  Leibniz algebras.

\subsubsection{The classification theorem}\label{secteoB}

\begin{theoremB}
Let ${\mathbb L}$ be a complex  $4$-dimensional nilpotent mono  Leibniz algebra.
Then ${\mathbb L}$ is a binary Leibniz  algebra or isomorphic to one algebra from the following list:

{\small
\begin{longtable}{llllllllllllll}
$\mathbb{M}_{01}$ & $: $ & $e_1e_1=e_2$ & $e_2e_3=e_4$ \\
$\mathbb{M}_{02}$ & $: $ & $e_1e_1=e_2$ & $e_2e_3=e_4$ & $e_3e_1=e_4$\\
$\mathbb{M}_{03}^\alpha$ & $: $ & $e_1e_2=e_3$ & $e_1e_3=\alpha e_4$ & $e_2e_1=-e_3$   & \multicolumn{2}{l}{$e_3e_1=(1-\alpha)e_4$} \\
$\mathbb{M}_{04}^\alpha$ & $: $ & $e_1e_2=e_3$ & $e_1e_3=\alpha e_4$ & $e_2e_1=-e_3$ & $e_2e_2=e_4$ & $e_3e_1=(1-\alpha)e_4$ \\
$\mathbb{M}_{05}$ & $: $ & $e_1e_2=e_3$ & $e_2e_1=-e_3$ & $e_2e_3=e_4$ & $e_3e_1=e_4$ & $e_3e_2=-e_4$ \\
$\mathbb{M}_{06}$ & $: $ & $e_1e_2=e_3$ & $e_2e_1=-e_3$ & $e_2e_2=e_4$ \\
&& $e_2e_3=e_4$ & $e_3e_1=e_4$ & $e_3e_2=-e_4$ \\
$\mathbb{M}_{07}$ & $: $ & $e_1e_2=e_3$ & $e_2e_1=-e_3$ & $e_3e_3=e_4$ \\
$\mathbb{M}_{08}$ & $: $ & $e_1e_2=e_3+e_4$ & $e_2e_1=-e_3$ & $e_3e_3=e_4$ \\
$\mathbb{M}_{09}$ & $: $ & $e_1e_1=e_4$ & $e_1e_2=e_3$ & $e_2e_1=-e_3$ & $e_3e_3=e_4$ \\
$\mathbb{M}_{10}^\alpha$ & $: $ & $e_1e_1=\alpha e_4$ & $e_1e_2=e_3$ & $e_1e_3=e_4$ \\
&& $e_2e_1=-e_3$   & $e_3e_1=-e_4$ & $e_3e_3=e_4$ \\
$\mathbb{M}_{11}$ & $: $ & $e_1e_2=e_3+e_4$ & $e_1e_3=e_4$ & $e_2e_1=-e_3$ & $e_3e_1=-e_4$ & $e_3e_3=e_4$ \\
$\mathbb{M}_{12}^\alpha$ & $: $ & $e_1e_1=\alpha e_4$ & $e_1e_2=e_3$ & $e_1e_3=e_4$ & $e_2e_1=-e_3$\\
&& $e_2e_2=e_4$  & $e_3e_1=-e_4$ & $e_3e_3=e_4$ \\
$\mathbb{M}_{13}^\alpha$ & $: $ & $e_1e_1=e_3$ & $e_1e_2=e_3$ & $e_2e_2=\alpha e_3$ & $e_3e_2=e_4$ \\
$\mathbb{M}_{14}^\alpha$ & $: $ & $e_1e_1=e_3$ & $e_1e_2=e_3$ & $e_2e_1=e_4$ & $e_2e_2=\alpha e_3$ & $e_3e_2=e_4$ \\
$\mathbb{M}_{15}^\alpha$ & $: $ & $e_1e_1=e_3$ & $e_1e_2=e_3$ & $e_2e_2=\alpha e_3$ & $e_3e_1=e_4$ & $e_3e_2=\frac{1}{2}(1-\sqrt{1-4\alpha})e_4$ \\
$\mathbb{M}_{16}^\alpha$ & $: $ & $e_1e_1=e_3$ & $e_1e_2=e_3$ & $e_2e_2=\alpha e_3+e_4$ & $e_3e_1=e_4$ & $e_3e_2=\frac{1}{2}(1-\sqrt{1-4\alpha})e_4$ \\
$\mathbb{M}_{17}^{\alpha\neq 0}$ & $:$ & $e_1e_1=e_3$ & $e_1e_2=e_3$ & $e_2e_2=\alpha e_3$ & $e_3e_1=e_4$ & $e_3e_2=\frac{1}{2}(1+\sqrt{1-4\alpha})e_4$ \\
$\mathbb{M}_{18}^{\alpha\neq 0}$ & $:$ & $e_1e_1=e_3$ & $e_1e_2=e_3$ & $e_2e_2=\alpha e_3+e_4$ & $e_3e_1=e_4$ & $e_3e_2=\frac{1}{2}(1+\sqrt{1-4\alpha})e_4$ \\
$\mathbb{M}_{19}$ & $: $ & $e_1e_1=e_3$ & $e_2e_2=e_3$ & $e_3e_1=e_4$\\
$\mathbb{M}_{20}$ & $: $ & $e_1e_1=e_3$ & $e_1e_2=e_4$ & $e_2e_2=e_3$ & $e_3e_1=e_4$ \\
$\mathbb{M}_{21}$ & $: $ & $e_1e_1=e_3$ & $e_2e_2=e_3$ & $e_3e_1=e_4$ & $e_3e_2=i e_4$\\
$\mathbb{M}_{22}$ & $: $ & $e_1e_1=e_3$ & $e_1e_2=e_4$ & $e_2e_2=e_3$ & $e_3e_1=e_4$ & $e_3e_2=i e_4$\\
\end{longtable}}
Note that algebras $\mathbb{M}_{17}^{4}(0)$ and $\mathbb{M}_{18}^{4}(0)$ are Leibniz algebras.
\end{theoremB}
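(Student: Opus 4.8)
The plan is to deduce the theorem from the Skjelbred--Sund central extension procedure described above, applied to the appropriate three-dimensional base algebras, followed by a completeness and irredundancy check.

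First I would reduce the problem. Since a nilpotent binary Leibniz algebra of dimension at most four is automatically a Leibniz algebra, the dichotomy in the statement is the same as ``$\mathbb{L}$ is Leibniz, or isomorphic to one of the $\mathbb{M}_i$'', so I may assume $\mathbb{L}$ is a four-dimensional nilpotent non-Leibniz mono Leibniz algebra and must place it among the $\mathbb{M}_i$. By the preliminary observations, $\mathbb{L}$ has at least two generators and $\mathbb{L}^3\neq 0$; and a four-dimensional nilpotent algebra with three or more generators has at most one-dimensional square and hence vanishing cube, which would make it Leibniz — so $\mathbb{L}$ has exactly two generators. Being non-Leibniz it has no annihilator component, whence $\operatorname{Ann}(\mathbb{L})\subseteq\mathbb{L}^2$ (a one-dimensional subspace of $\operatorname{Ann}(\mathbb{L})$ outside $\mathbb{L}^2$, together with a complement containing $\mathbb{L}^2$, would exhibit an annihilator component). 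Choosing a one-dimensional $I\subseteq\operatorname{Ann}(\mathbb{L})$, the quotient $\mathcal{N}=\mathbb{L}/I$ is two-generated, three-dimensional and nilpotent, hence $2$-step nilpotent, hence Leibniz, and $\mathbb{L}\cong\mathcal{N}_\theta$ for some $\theta\in{\rm Z}_{\rm ML}^2(\mathcal{N},\mathbb{C})$ with $[\theta]$ lying in ${\bf U}_1(\mathcal{N})$ (the non-split, non-Leibniz part). Thus $\mathcal{N}$ is one of the two-generated three-dimensional $2$-step nilpotent Leibniz algebras $\mathcal{N}_{01},\mathcal{N}_{02},\mathcal{N}_{03}^\alpha,\mathcal{N}_{04}$, with $\mathcal{N}_{03}^\alpha$ treated separately for $\alpha=0$, where ${\rm H}^2$ jumps.

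Next I would run the procedure on each base: determine $\operatorname{Aut}(\mathcal{N})$ and its action on ${\rm H}_{\rm ML}^2(\mathcal{N},\mathbb{C})$, keep only cocycles with a nonzero component outside ${\rm H}_{\rm BL}^2(\mathcal{N},\mathbb{C})$ (these are the genuinely mono, non-binary ones), and select a transversal for the $\operatorname{Aut}(\mathcal{N})$-orbits on ${\bf U}_1(\mathcal{N})$; reading off the corresponding multiplication tables yields exactly the algebras $\mathbb{M}_{01}$--$\mathbb{M}_{22}$. By the isomorphism lemma stated above, distinct orbits over the same base give non-isomorphic algebras. For algebras coming from different bases: since $[\theta]\in{\bf T}_1(\mathcal{N})$ forces $\operatorname{Ann}(\mathcal{N}_\theta)=(\operatorname{Ann}(\theta)\cap\operatorname{Ann}(\mathcal{N}))\oplus\mathbb{V}=\mathbb{V}$, every $\mathbb{M}_i$ has a one-dimensional annihilator and $\mathbb{M}_i/\operatorname{Ann}(\mathbb{M}_i)$ recovers $\mathcal{N}$, so no $\mathbb{M}_i$ from one base is isomorphic to one from another. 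It then remains to check that within each one-parameter family distinct parameters lie in distinct orbits (read off from the explicit action formulas) and that each $\mathbb{M}_i$ really is mono Leibniz but not binary Leibniz — the latter because $[\theta]\notin{\rm H}_{\rm BL}^2(\mathcal{N},\mathbb{C})$ implies $\theta\notin{\rm Z}_{\rm BL}^2(\mathcal{N},\mathbb{C})$ — with the sole exceptions $\mathbb{M}_{17}^{4}(0)$ and $\mathbb{M}_{18}^{4}(0)$, which turn out to be Leibniz, as recorded in the remark.

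The main obstacle is the orbit analysis for $\mathcal{N}_{02}$ and $\mathcal{N}_{03}^\alpha$. For $\mathcal{N}_{02}$ the space ${\rm H}_{\rm ML}^2(\mathcal{N}_{02},\mathbb{C})$ is eight-dimensional and $\operatorname{Aut}(\mathcal{N}_{02})$ has three essential parameters, so the reduction to canonical forms branches repeatedly — according to whether $\alpha_8$ vanishes, whether the binary quadratic form $\alpha_1x^2+\alpha_2xy+\alpha_4y^2$ is degenerate, and so on. For $\mathcal{N}_{03}^\alpha$ the normalizing automorphism must be chosen according to the vanishing of $\alpha\alpha_4^2+\alpha_4\alpha_5+\alpha_5^2$ and involves $\sqrt{1-4\alpha}$, which is precisely what produces the families $\mathbb{M}_{15}^\alpha$--$\mathbb{M}_{18}^\alpha$ and the two Leibniz specializations mentioned above. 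All of this case work has been carried out in the subsections preceding the theorem; the content of the statement is that, after the reduction and the irredundancy check described here, this list is complete and without repetition.
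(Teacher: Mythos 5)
Your proposal is correct and follows essentially the same route as the paper: reduce to a one-dimensional central extension of a two-generated three-dimensional $2$-step nilpotent Leibniz algebra ($\mathcal{N}_{01}$, $\mathcal{N}_{02}$, $\mathcal{N}_{03}^{\alpha}$, $\mathcal{N}_{04}$), restrict to cocycle classes outside ${\rm H}_{\rm BL}^2$, and classify the $\operatorname{Aut}$-orbits case by case. Your explicit justification that the algebra has exactly two generators and that algebras over distinct bases are distinguished by the quotient modulo the annihilator merely spells out steps the paper leaves implicit.
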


\subsection{Classification of $4$-dimensional nilpotent algebras with nil-index  $3$}\label{secteoC}
Thanks to \cite{bkm22}, the intersection of left mono Leibniz and right mono Leibniz algebras gives the
variety of nil-algebras of nil-index $3.$
Hence, each $4$-dimensional nilpotent algebra with nil-index  $3$ is in the classification given in Theorem B.
Obviously,
each $2$-step nilpotent algebra has nil-index $3.$
Using the classification of $4$-dimensional Leibniz algebras \cite{kppv} and Theorem B,
we can choose all Leibniz algebras with nil-index $3.$
Let us note that the linearization of the identity $x^3=0$ (i.e. $x^2x=0=xx^2$)
gives two identities \begin{center}
$\sum\limits_{\sigma \in \mathbb S_3} (x_{\sigma(1)}x_{\sigma(2)})x_{\sigma(3)}=0$
and
$\sum\limits_{\sigma \in \mathbb S_3} x_{\sigma(1)}(x_{\sigma(2)}x_{\sigma(3)})=0$.
\end{center}
Hence, we will choose only algebras satisfying the last two identities.
Summarizing, we have the following Theorem.

\begin{theoremC}
Let ${\mathfrak n}$ be a complex  $4$-dimensional nilpotent  algebra of nil-index $3$.
Then ${\mathfrak n}$ is a $2$-step nilpotent algebra or isomorphic to one algebra from the following list:

\begin{longtable}{llllllllllllll}

$\mathfrak{L}_{01}$ & $: $&  $e_1e_2 = e_3$ & $ e_1e_3=e_4$
&  $e_2e_1 =- e_3$ & $ e_3e_1=-e_4$\\

$\mathfrak{L}_{09}$  & $: $& $e_1e_2=-e_3+e_4$ & $e_1e_3=-e_4$ &  $e_2e_1 = e_3$ &   $e_3e_1=e_4$ \\

$\mathfrak{L}_{10}$ & $: $& $e_1e_2=-e_3$ & $e_1e_3=-e_4$ &  $e_2e_1 = e_3$ &   $e_2e_2=e_4$  & $e_3e_1=e_4$   \\

$\mathfrak{L}_{11}$ & $: $ & $e_1e_1 = e_4$ &  $e_1e_2=-e_3$ & $e_1e_3=-e_4$&  $e_2e_1 = e_3$ & $e_2e_2=e_4$ & $e_3e_1=e_4$ \\

$\mathfrak{L}_{12}$ & $: $& $e_1e_1 = e_4$ & $e_1e_2=-e_3$ & $e_1e_3=-e_4$ & $e_2e_1 = e_3$ &   $e_3e_1=e_4$ \\

$\mathbb{M}_{03}^\alpha$ & $: $ & $e_1e_2=e_3$ & $e_1e_3=\alpha e_4$ & $e_2e_1=-e_3$ & \multicolumn{2}{l}{ $e_3e_1=(1-\alpha)e_4$} \\
$\mathbb{M}_{04}^\alpha$ & $: $ & $e_1e_2=e_3$ & $e_1e_3=\alpha e_4$ & $e_2e_1=-e_3$ & $e_2e_2=e_4$ & \multicolumn{2}{l}{$e_3e_1=(1-\alpha)e_4$} \\
$\mathbb{M}_{05}$ & $: $ & $e_1e_2=e_3$ & $e_2e_1=-e_3$ & $e_2e_3=e_4$ & $e_3e_1=e_4$ & $e_3e_2=-e_4$ \\
$\mathbb{M}_{06}$ & $: $ & $e_1e_2=e_3$ & $e_2e_1=-e_3$ & $e_2e_2=e_4$ & $e_2e_3=e_4$ & $e_3e_1=e_4$ & $e_3e_2=-e_4$ \\
$\mathbb{M}_{07}$ & $: $ & $e_1e_2=e_3$ & $e_2e_1=-e_3$ & $e_3e_3=e_4$ \\
$\mathbb{M}_{08}$ & $: $ & $e_1e_2=e_3+e_4$ & $e_2e_1=-e_3$ & $e_3e_3=e_4$ \\
$\mathbb{M}_{09}$ & $: $ & $e_1e_1=e_4$ & $e_1e_2=e_3$ & $e_2e_1=-e_3$ & $e_3e_3=e_4$ \\
$\mathbb{M}_{10}^\alpha$ & $: $ & $e_1e_1=\alpha e_4$ & $e_1e_2=e_3$ & $e_1e_3=e_4$ & $e_2e_1=-e_3$ & $e_3e_1=-e_4$ & $e_3e_3=e_4$ \\
$\mathbb{M}_{11}$ & $: $ & $e_1e_2=e_3+e_4$ & $e_1e_3=e_4$ & $e_2e_1=-e_3$ & $e_3e_1=-e_4$ & $e_3e_3=e_4$ \\
$\mathbb{M}_{12}^\alpha$ & $: $ & $e_1e_1=\alpha e_4$ & $e_1e_2=e_3$ & $e_1e_3=e_4$ & $e_2e_1=-e_3$ \\
& & $e_2e_2=e_4$& $e_3e_1=-e_4$ & $e_3e_3=e_4$ \\
\end{longtable}

\end{theoremC}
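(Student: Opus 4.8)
The plan is to reduce the classification to a finite filtering problem resting on three inputs: the description of nil-index $3$ algebras from \cite{bkm22}, the classification of $4$-dimensional nilpotent mono Leibniz algebras in Theorem B, and the known classification of $4$-dimensional nilpotent Leibniz algebras from \cite{kppv}. By \cite{bkm22} the variety of nil-algebras of nil-index $3$ is the intersection of the varieties of left and right mono Leibniz algebras, so in particular every $4$-dimensional nilpotent algebra $\mathfrak{n}$ of nil-index $3$ is a mono Leibniz algebra; by Theorem B it is therefore $2$-step nilpotent, or a Leibniz algebra, or isomorphic to one of $\mathbb{M}_{01},\dots,\mathbb{M}_{22}$. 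Since the $2$-step nilpotent case is collected separately, what remains is to decide which of the $4$-dimensional nilpotent Leibniz algebras of \cite{kppv} and which of the non-Leibniz families $\mathbb{M}_i$ have nil-index $3$.

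For the filtering step I would use that, the algebras involved being finite-dimensional and nilpotent, the nil-index $3$ condition is exactly $x^2x=0=xx^2$ for all $x$, which over $\mathbb{C}$ is equivalent by full polarization to the two multilinear identities $\sum_{\sigma\in\mathbb{S}_3}(x_{\sigma(1)}x_{\sigma(2)})x_{\sigma(3)}=0$ and $\sum_{\sigma\in\mathbb{S}_3}x_{\sigma(1)}(x_{\sigma(2)}x_{\sigma(3)})=0$ (substituting $x_i=x$ recovers $6x^2x$ and $6xx^2$, and $6\ne0$ in characteristic $0$). Then I would evaluate both identities on all triples of basis vectors for each algebra in the Leibniz list of \cite{kppv} and for each family $\mathbb{M}_i^{(\alpha)}$ from Theorem B. In the non-parametric cases this is a direct check; in the parametric cases it produces polynomial constraints on $\alpha$ that either hold identically or cut out the admissible subfamily. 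I expect this to leave exactly $\mathfrak{L}_{01},\mathfrak{L}_{09},\mathfrak{L}_{10},\mathfrak{L}_{11},\mathfrak{L}_{12}$ from \cite{kppv} and $\mathbb{M}_{03}^{\alpha},\dots,\mathbb{M}_{12}^{\alpha}$ from Theorem B, with $\mathbb{M}_{01},\mathbb{M}_{02}$ and $\mathbb{M}_{13}^{\alpha},\dots,\mathbb{M}_{22}$ each violating one of the two relations (for every value of the parameter, where relevant).

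Finally I would eliminate overlaps between the two sources: a surviving Leibniz algebra from \cite{kppv} could in principle be isomorphic to one of the surviving $\mathbb{M}_i$, so after assembling the candidates I would separate them by standard isomorphism invariants (dimensions of the annihilator, of the square of the algebra, of the left and right annihilators, the Leibniz versus non-Leibniz dichotomy, and the value of the structure parameter when present), and I would also double-check that each discarded family genuinely fails one of the linearized identities so that nothing of nil-index $3$ is dropped. The hard part is not conceptual but organizational: one must run the two identity checks on all basis triples across the entire $4$-dimensional nilpotent Leibniz list together with the $22$ families $\mathbb{M}_i$, handle the parametric case splits, and then carry out the isomorphism reconciliation --- a lengthy but routine computation, after which the asserted list $\mathfrak{L}_{01},\mathfrak{L}_{09},\mathfrak{L}_{10},\mathfrak{L}_{11},\mathfrak{L}_{12},\mathbb{M}_{03}^{\alpha},\dots,\mathbb{M}_{12}^{\alpha}$ (together with the $2$-step nilpotent algebras) is exactly what remains.
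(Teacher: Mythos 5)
Your proposal follows essentially the same route as the paper: invoke \cite{bkm22} to see that nil-index $3$ algebras are mono Leibniz and hence covered by Theorem B, then filter the $4$-dimensional nilpotent Leibniz algebras of \cite{kppv} and the families $\mathbb{M}_i$ by the two multilinear identities obtained from polarizing $x^2x=0=xx^2$. Your write-up actually spells out more detail than the paper does (the characteristic-zero justification for polarization and the isomorphism reconciliation step), but the strategy and the resulting list are identical.
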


\section{The geometric classification of nilpotent binary and mono  Leibniz algebras}

\subsection{Definitions and notation}
Given an $n$-dimensional vector space $\mathbb V$, the set ${\rm Hom}(\mathbb V \otimes \mathbb V,\mathbb V) \cong \mathbb V^* \otimes \mathbb V^* \otimes \mathbb V$ is a vector space of dimension $n^3$. This space has the structure of the affine variety $\mathbb{C}^{n^3}.$ Indeed, let us fix a basis $e_1,\dots,e_n$ of $\mathbb V$. Then any $\mu\in {\rm Hom}(\mathbb V \otimes \mathbb V,\mathbb V)$ is determined by $n^3$ structure constants $c_{ij}^k\in\mathbb{C}$ such that
$\mu(e_i\otimes e_j)=\sum\limits_{k=1}^nc_{ij}^ke_k$. A subset of ${\rm Hom}(\mathbb V \otimes \mathbb V,\mathbb V)$ is {\it Zariski-closed} if it can be defined by a set of polynomial equations in the variables $c_{ij}^k$ ($1\le i,j,k\le n$).

Let $T$ be a set of polynomial identities.
The set of algebra structures on $\mathbb V$ satisfying polynomial identities from $T$ forms a Zariski-closed subset of the variety ${\rm Hom}(\mathbb V \otimes \mathbb V,\mathbb V)$. We denote this subset by $\mathbb{L}(T)$.
The general linear group ${\rm GL}(\mathbb V)$ acts on $\mathbb{L}(T)$ by conjugations:
$$ (g * \mu )(x\otimes y) = g\mu(g^{-1}x\otimes g^{-1}y)$$
for $x,y\in \mathbb V$, $\mu\in \mathbb{L}(T)\subset {\rm Hom}(\mathbb V \otimes\mathbb V, \mathbb V)$ and $g\in {\rm GL}(\mathbb V)$.
Thus, $\mathbb{L}(T)$ is decomposed into ${\rm GL}(\mathbb V)$-orbits that correspond to the isomorphism classes of algebras.
Let $O(\mu)$ denote the orbit of $\mu\in\mathbb{L}(T)$ under the action of ${\rm GL}(\mathbb V)$ and $\overline{O(\mu)}$ denote the Zariski closure of $O(\mu)$.

Let $\bf A$ and $\bf B$ be two $n$-dimensional algebras satisfying the identities from $T$, and let $\mu,\lambda \in \mathbb{L}(T)$ represent $\bf A$ and $\bf B$, respectively.
We say that $\bf A$ degenerates to $\bf B$ and write $\bf A\to \bf B$ if $\lambda\in\overline{O(\mu)}$.
Note that in this case we have $\overline{O(\lambda)}\subset\overline{O(\mu)}$. Hence, the definition of degeneration does not depend on the choice of $\mu$ and $\lambda$. If $\bf A\not\cong \bf B$, then the assertion $\bf A\to \bf B$ is called a {\it proper degeneration}. We write $\bf A\not\to \bf B$ if $\lambda\not\in\overline{O(\mu)}$.

Let $\bf A$ be represented by $\mu\in\mathbb{L}(T)$. Then  $\bf A$ is  {\it rigid} in $\mathbb{L}(T)$ if $O(\mu)$ is an open subset of $\mathbb{L}(T)$.
 Recall that a subset of a variety is called irreducible if it cannot be represented as a union of two non-trivial closed subsets.
 A maximal irreducible closed subset of a variety is called an {\it irreducible component}.
It is well known that any affine variety can be represented as a finite union of its irreducible components in a unique way.
The algebra $\bf A$ is rigid in $\mathbb{L}(T)$ if and only if $\overline{O(\mu)}$ is an irreducible component of $\mathbb{L}(T)$.

\subsection{Method of the description of  degenerations of algebras}

In the present work we use the methods applied to Lie algebras in \cite{GRH,GRH2}.
First of all, if $\bf A\to \bf B$ and $\bf A\not\cong \bf B$, then $\mathfrak{Der}(\bf A)<\mathfrak{Der}(\bf B)$, where $\mathfrak{Der}(\bf A)$ is the Lie algebra of derivations of $\bf A$. We compute the dimensions of algebras of derivations and check the assertion $\bf A\to \bf B$ only for such $\bf A$ and $\bf B$ that $\mathfrak{Der}(\bf A)<\mathfrak{Der}(\bf B)$.

To prove degenerations, we construct families of matrices parametrized by $t$. Namely, let $\bf A$ and $\bf B$ be two algebras represented by the structures $\mu$ and $\lambda$ from $\mathbb{L}(T)$ respectively. Let $e_1,\dots, e_n$ be a basis of $\mathbb  V$ and $c_{ij}^k$ ($1\le i,j,k\le n$) be the structure constants of $\lambda$ in this basis. If there exist $a_i^j(t)\in\mathbb{C}$ ($1\le i,j\le n$, $t\in\mathbb{C}^*$) such that $E_i^t=\sum\limits_{j=1}^na_i^j(t)e_j$ ($1\le i\le n$) form a basis of $\mathbb V$ for any $t\in\mathbb{C}^*$, and the structure constants of $\mu$ in the basis $E_1^t,\dots, E_n^t$ are such rational functions $c_{ij}^k(t)\in\mathbb{C}[t]$ that $c_{ij}^k(0)=c_{ij}^k$, then $\bf A\to \bf B$.
In this case  $E_1^t,\dots, E_n^t$ is called a {\it parametrized basis} for $\bf A\to \bf B$.
To simplify our equations, we will use the notation $A_i=\langle e_i,\dots,e_n\rangle,\ i=1,\ldots,n$ and write simply $A_pA_q\subset A_r$ instead of $c_{ij}^k=0$ ($i\geq p$, $j\geq q$, $k< r$).

Since the varieties of $4$-dimensional nilpotent mono Leibniz and $5$-dimensional nilpotent binary  Leibniz algebras  contain infinitely many non-isomorphic algebras, we have to do some additional work.
Let ${\bf A}(*):=\{ {\bf A}(\alpha)\}_{\alpha\in I}$ be a series of algebras, and let $\bf B$ be another algebra. Suppose that for $\alpha\in I$, $\bf A(\alpha)$ is represented by the structure $\mu(\alpha)\in\mathbb{L}(T)$ and $B\in\mathbb{L}(T)$ is represented by the structure $\lambda$. Then we say that $\bf A(*)\to \bf B$ if $\lambda\in\overline{\{O(\mu(\alpha))\}_{\alpha\in I}}$, and $\bf A(*)\not\to \bf B$ if $\lambda\not\in\overline{\{O(\mu(\alpha))\}_{\alpha\in I}}$.

Let $\bf A(*)$, $\bf B$, $\mu(\alpha)$ ($\alpha\in I$) and $\lambda$ be as above. To prove $\bf A(*)\to \bf B$ it is enough to construct a family of pairs $(f(t), g(t))$ parametrized by $t\in\mathbb{C}^*$, where $f(t)\in I$ and $g(t)\in {\rm GL}(\mathbb V)$. Namely, let $e_1,\dots, e_n$ be a basis of $\mathbb V$ and $c_{ij}^k$ ($1\le i,j,k\le n$) be the structure constants of $\lambda$ in this basis. If we construct $a_i^j:\mathbb{C}^*\to \mathbb{C}$ ($1\le i,j\le n$) and $f: \mathbb{C}^* \to I$ such that $E_i^t=\sum\limits_{j=1}^na_i^j(t)e_j$ ($1\le i\le n$) form a basis of $\mathbb V$ for any  $t\in\mathbb{C}^*$, and the structure constants of $\mu_{f(t)}$ in the basis $E_1^t,\dots, E_n^t$ are such rational functions $c_{ij}^k(t)\in\mathbb{C}[t]$ that $c_{ij}^k(0)=c_{ij}^k$, then $\bf A(*)\to \bf B$. In this case  $E_1^t,\dots, E_n^t$ and $f(t)$ are called a parametrized basis and a {\it parametrized index} for $\bf A(*)\to \bf B$, respectively.

We now explain how to prove $\bf A(*)\not\to\mathcal  B$.
Note that if $\mathfrak{Der} \ \bf A(\alpha)  > \mathfrak{Der} \  \bf B$ for all $\alpha\in I$ then $\bf A(*)\not\to\bf B$.
One can also use the following  Lemma, whose proof is the same as the proof of Lemma 1.5 from \cite{GRH}.

\begin{lemma}\label{gmain}
Let $\mathfrak{B}$ be a Borel subgroup of ${\rm GL}(\mathbb V)$ and $\mathcal{R}\subset \mathbb{L}(T)$ be a $\mathfrak{B}$-stable closed subset.
If $\bf A(*) \to \bf B$ and for any $\alpha\in I$ the algebra $\bf A(\alpha)$ can be represented by a structure $\mu(\alpha)\in\mathcal{R}$, then there is $\lambda\in \mathcal{R}$ representing $\bf B$.
\end{lemma}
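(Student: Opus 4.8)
The plan is to deduce the statement from a standard fact in the theory of algebraic transformation groups: if a linear algebraic group $G$ acts on an affine variety $X$, if $\mathfrak{B}\leq G$ is a closed subgroup for which the homogeneous space $G/\mathfrak{B}$ is complete, and if $\mathcal{R}\subseteq X$ is a $\mathfrak{B}$-stable Zariski-closed subset, then the saturation $G\cdot\mathcal{R}=\{g\cdot r:g\in G,\ r\in\mathcal{R}\}$ is Zariski-closed in $X$. In our situation $G={\rm GL}(\mathbb V)$, $X=\mathbb{L}(T)$, $\mathfrak{B}$ is a Borel subgroup (so $G/\mathfrak{B}$ is the flag variety, which is projective and hence complete), and $\mathcal{R}$ is the subset in the hypothesis. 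Granting this fact, the lemma follows immediately: choosing for each $\alpha\in I$ a representative $\mu(\alpha)\in\mathcal{R}$, as the hypothesis permits, we get $\bigcup_{\alpha\in I}O(\mu(\alpha))\subseteq{\rm GL}(\mathbb V)\cdot\mathcal{R}$; since ${\bf A}(*)\to{\bf B}$ means $\lambda\in\overline{\{O(\mu(\alpha))\}_{\alpha\in I}}$ and ${\rm GL}(\mathbb V)\cdot\mathcal{R}$ is closed, we obtain $\lambda\in{\rm GL}(\mathbb V)\cdot\mathcal{R}$, so $g^{-1}*\lambda\in\mathcal{R}$ for some $g\in{\rm GL}(\mathbb V)$, and this structure lies in $\mathcal{R}$ and represents ${\bf B}$.

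To prove that $G\cdot\mathcal{R}$ is closed I would pass to the associated fibre bundle. Let $\mathfrak{B}$ act on $G\times\mathcal{R}$ by $b\cdot(g,r)=(gb^{-1},b\cdot r)$; the action is free on the first factor, so the quotient $G\times_{\mathfrak{B}}\mathcal{R}$ exists as a variety and is a locally trivial bundle over $G/\mathfrak{B}$ with fibre $\mathcal{R}$. The multiplication morphism $(g,r)\mapsto g\cdot r$ is constant on $\mathfrak{B}$-orbits, hence descends to $\bar m\colon G\times_{\mathfrak{B}}\mathcal{R}\to X$ whose image is exactly $G\cdot\mathcal{R}$. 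I would then identify $G\times_{\mathfrak{B}}\mathcal{R}$, via $[g,r]\mapsto(g\mathfrak{B},g\cdot r)$, with the subset $\{(g\mathfrak{B},x)\in G/\mathfrak{B}\times X:g^{-1}\cdot x\in\mathcal{R}\}$ of $G/\mathfrak{B}\times X$; this subset is closed, since over any Zariski-open $U\subseteq G/\mathfrak{B}$ carrying a section $s\colon U\to G$ of the projection the defining condition becomes $s(g\mathfrak{B})^{-1}\cdot x\in\mathcal{R}$, i.e. the preimage of the closed set $\mathcal{R}$ under the morphism $U\times X\to X$, $(u,x)\mapsto s(u)^{-1}\cdot x$, and the $\mathfrak{B}$-stability of $\mathcal{R}$ makes this condition independent of the section chosen. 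Finally, because $G/\mathfrak{B}$ is complete the second projection ${\rm pr}_2\colon G/\mathfrak{B}\times X\to X$ is a closed map, so it carries the closed set $G\times_{\mathfrak{B}}\mathcal{R}$ onto the closed set $G\cdot\mathcal{R}$, which is what we wanted.

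The main obstacle, and really the only geometric ingredient, is the completeness of ${\rm GL}(\mathbb V)/\mathfrak{B}$: this is exactly why one replaces the full group by a Borel subgroup, and it is what turns the relevant projection into a closed map; everything else is formal manipulation of orbits and their closures. Since the argument uses nothing about $\mathbb{L}(T)$ beyond the conjugation action of ${\rm GL}(\mathbb V)$ on it, it is word-for-word the proof of Lemma 1.5 in \cite{GRH}, and I would finish by pointing out this verbatim transfer to the varieties of $4$-dimensional nilpotent mono Leibniz algebras and of $5$-dimensional nilpotent binary Leibniz algebras treated here, and citing that proof.
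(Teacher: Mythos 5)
Your proposal is correct and matches the paper's approach: the paper simply states that the proof is the same as that of Lemma 1.5 in \cite{GRH}, and what you have written out (closedness of ${\rm GL}(\mathbb V)\cdot\mathcal{R}$ via the associated bundle over the complete flag variety ${\rm GL}(\mathbb V)/\mathfrak{B}$, then containment of the closure of $\bigcup_{\alpha}O(\mu(\alpha))$) is precisely that argument, correctly adapted to the case of a parametrized family ${\bf A}(*)$.
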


\subsection{The geometric classification of $5$-dimensional nilpotent binary
  Leibniz algebras}
The main result of the present section is the following theorem.

\begin{theoremD}\label{geo3}
The variety of $5$-dimensional nilpotent binary  Leibniz     algebras  has
dimension  $24$   and it has  $10$  irreducible components defined by
\begin{center}
$\mathcal{C}_1=\overline{\{\mathcal{O}({\mathfrak V}_{4+1})\}},$ \
$\mathcal{C}_2=\overline{\{\mathcal{O}({\mathfrak V}_{3+2})\}},$ \
$\mathcal{C}_3=\overline{\{\mathcal{O}(\mathbb{S}_{21}^{\alpha,\beta})\}},$    \
$\mathcal{C}_4=\overline{\{\mathcal{O}(\mathbb{S}_{41}^{\alpha})\}},$   \
$\mathcal{C}_5=\overline{\{\mathcal{O}(\mathbb{L}_{28}^{\alpha})\}},$ \
$\mathcal{C}_6=\overline{\{\mathcal{O}(\mathbb{L}_{47}^{\alpha, \beta})\}},$ \
$\mathcal{C}_7=\overline{\{\mathcal{O}(\mathbb{L}_{52}^{\alpha, \beta})\}},$  \
$\mathcal{C}_8=\overline{\{\mathcal{O}(\mathbb{L}_{79}^{\alpha})\}},$   \
$\mathcal{C}_{9}=\overline{\{\mathcal{O}(\mathbb{L}_{82}))\}},$ \
$\mathcal{C}_{10}=\overline{\{\mathcal{O}({\bf B}_{09}^{\alpha})\}},$
\end{center}

In particular, there is only one  rigid algebra in this variety.

\end{theoremD}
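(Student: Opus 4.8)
The plan is to combine the algebraic classification in Theorem~A with the known list of $5$-dimensional nilpotent Leibniz algebras from \cite{leib5}: set-theoretically the variety $\mathbb L$ of $5$-dimensional nilpotent binary Leibniz algebras is the union of the $\mathrm{GL}_5$-orbit closures of these finitely many algebras together with finitely many one- and two-parameter families. First I would compute $\dim\mathfrak{Der}$ of every algebra and of the generic member of every family; then $\dim\mathcal{O}(\mathbf A)=25-\dim\mathfrak{Der}(\mathbf A)$, and for a family $\mathbf A(*)$ the set $\overline{\{\mathcal{O}(\mathbf A(\alpha))\}}$ has dimension equal to the generic orbit dimension plus the number of essential parameters. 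This already pins down the number $24$ in the statement: it will be realised by the closure of one of the two-parameter families, whose generic member has a three-dimensional derivation algebra. The ten sets $\mathcal{C}_1,\dots,\mathcal{C}_{10}$ are the candidate components; in the final list nine are orbit closures of families of Leibniz algebras and only $\mathcal{C}_{10}=\overline{\{\mathcal{O}(\mathbf B_{09}^{\alpha})\}}$ involves the genuinely non-Leibniz algebras $\mathbf B_{01},\dots,\mathbf B_{14}$, while $\mathbb{L}_{82}$ is the only one of the ten carrying no parameter, hence the unique rigid algebra.

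Second, I would prove the covering $\mathbb L=\bigcup_{i=1}^{10}\mathcal{C}_i$. For every algebra or family $\mathbf C$ occurring in the classification but not among the ten, one exhibits a degeneration $\mathbf A(*)\to\mathbf C$ (or $\mathbf A\to\mathbf C$) from one of the ten by writing down an explicit parametrized basis $E_1^t,\dots,E_5^t$, together with a parametrized index $f(t)$ when the source is a family, checked against the criterion in the ``method'' subsection that the structure constants of the source in this basis are polynomial in $t$ and specialise at $t=0$ to those of $\mathbf C$. These are collected in one large table. In particular one must verify that all fourteen non-Leibniz algebras $\mathbf B_{01},\dots,\mathbf B_{14}$ lie in $\mathcal{C}_{10}$, i.e.\ degenerate from $\mathbf B_{09}^{\alpha}$ for suitable values of $\alpha$, or else from one of the Leibniz components; this is exactly why passing from the Leibniz variety to the binary Leibniz variety adds only one new irreducible component.

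Third, I would prove minimality: no $\mathcal{C}_i$ is contained in $\bigcup_{j\ne i}\mathcal{C}_j$. The main tool is the monotonicity of $\dim\mathfrak{Der}$: if $\mathbf A\to\mathbf B$ with $\mathbf A\not\cong\mathbf B$ then $\dim\mathfrak{Der}(\mathbf A)<\dim\mathfrak{Der}(\mathbf B)$, and a family $\mathbf A(*)$ with $\dim\mathfrak{Der}(\mathbf A(\alpha))>\dim\mathfrak{Der}(\mathbf B)$ for all $\alpha$ satisfies $\mathbf A(*)\not\to\mathbf B$; this rules out the large majority of potential orbit-closure relations among the ten candidates. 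For the remaining borderline pairs I would invoke Lemma~\ref{gmain}: exhibit a Borel-stable closed subset $\mathcal{R}\subset\mathbb{L}(T)$, typically cut out by the vanishing of certain structure constants in a fixed basis, i.e.\ conditions of the form $A_pA_q\subseteq A_r$, that contains a representative of every member of the source family but no representative of the target. Having shown the ten sets pairwise incomparable and covering $\mathbb L$, they are precisely the irreducible components; then $\dim\mathbb L=\max_i\dim\mathcal{C}_i=24$, and rigidity of $\mathbb{L}_{82}$ follows since its orbit closure is a component while it carries no parameter.

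The hard part will be the bookkeeping in the last two steps. The non-degeneration step is the real obstacle: several pairs among the ten candidates, and among candidates versus the many remaining algebras, will have inconclusive $\dim\mathfrak{Der}$, and each such pair needs its own invariant or its own $\mathfrak{B}$-stable closed set; moreover one must be careful with the parametrized families, since showing that a two-parameter family is not absorbed by another family of the same parameter count sometimes forces one to track how the parameter itself must behave. Dually, one must make sure the table in the covering step is exhaustive, namely that no algebra in Theorem~A or in the list of $5$-dimensional nilpotent Leibniz algebras has been left uncovered.
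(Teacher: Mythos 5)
Your plan coincides with the paper's proof: it starts from the ten irreducible components of the $5$-dimensional nilpotent Leibniz subvariety given in \cite{leib5}, computes the orbit-closure dimensions (the maximum $24$ is attained by $\overline{\mathcal{O}({\mathfrak V}_{3+2})}$, not by a two-parameter family as you guessed, but that does not affect the method), shows every ${\bf B}_{i}$ degenerates from ${\bf B}_{09}^{\alpha}$ via explicit parametrized bases and indices, rules out ${\bf B}_{09}^{\alpha}\to {\mathfrak V}_{4+1},\ \mathbb{S}_{41}^{\alpha},\ \mathbb{S}_{21}^{\alpha,\beta}$ with a Borel-stable closed set exactly as in your Lemma~\ref{gmain} step, and obtains rigidity of $\mathbb{L}_{82}$ from its being the unique one-generated algebra. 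The one computation your outline would still have to surface explicitly is the degeneration ${\bf B}_{09}^{*}\to\mathbb{S}_{22}^{\alpha}$, which is the reason a former component of the Leibniz subvariety is absorbed and the total count stays at ten rather than becoming eleven; your covering step does subsume it, since $\mathbb{S}_{22}^{\alpha}$ occurs in the classification but not among the ten candidates.
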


\begin{proof}
Thanks to \cite{leib5} the variety of $5$-dimensional  nilpotent  Leibniz algebras has only $10$  irreducible components defined by

{\small
\begin{longtable}{lllllll}
${\mathfrak V}_{4+1}$ & $:$&
$e_1e_2=e_5$& $e_2e_1=\lambda e_5$ &$e_3e_4=e_5$&$e_4e_3=\mu e_5$\\

${\mathfrak V}_{3+2}$ &$ :$&
$e_1e_1 =  e_4$& $e_1e_2 = \mu_1 e_5$ & $e_1e_3 =\mu_2 e_5$&
$e_2e_1 = \mu_3 e_5$  & $e_2e_2 = \mu_4 e_5$  \\
& & $e_2e_3 = \mu_5 e_5$  & $e_3e_1 = \mu_6 e_5$  & \multicolumn{2}{l}{$e_3e_2 = \lambda e_4+ \mu_7 e_5$ } & $e_3e_3 =  e_5$  \\

$\mathbb{S}_{21}^{\alpha,\beta}$ &$:$&
$e_1e_1=\alpha e_5$ &  $e_1 e_2 =e_3+e_4+\beta e_5$  & $e_1e_3=e_5$ & $e_2 e_1 =-e_3$ \\&& $e_2e_2=e_5$  & $e_2e_3 =e_4$ & $e_3e_1=-e_5$ & $e_3e_2=-e_4$\\

$\mathbb{S}_{22}^{\alpha}$ &$:$& $e_1e_1=e_5$ &  $e_1 e_2 =e_3$  & $e_1e_3=e_5$ & $e_2 e_1 =-e_3$ \\&& $e_2e_2=\alpha e_5$  &  $e_2e_4=e_5$ & $e_3e_1=-e_5$ & $e_4e_4=e_5$\\

$\mathbb{S}_{41}^{\alpha}$ &$:$& $e_1e_1=e_5$ & $e_1 e_2 =e_3$  & $e_1e_3=e_5$ & $e_2 e_1 =-e_3$ & $e_2e_2=\alpha e_5$ \\&& $e_2e_3=e_4$ & $e_2e_4=e_5$ &  $e_3e_1=-e_5$ & $e_3e_2=-e_4$ & $e_4e_2=-e_5$\\

$\mathbb{L}_{28}^{\alpha}$ & $: $ & $e_1e_1=e_3$  & $e_1e_2=e_3$ & $e_1e_4=\alpha e_5$ &  $e_2e_2=e_5$ \\
& & $e_3e_1=e_5$ & $e_3e_2=e_5$ & $e_4e_4=e_5$ \\

$\mathbb{L}_{47}^{\alpha, \beta}$ & $: $ & $e_1e_1=e_3$  & $e_1e_2=e_4$ & $e_2e_1=-\alpha e_3$ \\
&&  $e_2e_2=-e_4$ & $e_3e_1=e_5$ & $e_4e_2=\beta e_5$ &  \\

$\mathbb{L}_{52}^{\alpha, \beta}$ & $: $ & $e_1e_2=e_3$ & $e_1e_3=-e_5$ & $e_1e_4=e_5$ & $e_2e_1=e_4$ &  $e_2e_3=\beta e_5$  \\
&& $e_2e_4=-\beta e_5$ & $e_3e_1=e_5$ & $e_3e_2=e_5$ &  $e_4e_1=\alpha e_5$&  $e_4e_2=\beta e_5$    \\

$\mathbb{L}_{79}^{\alpha}$ & $: $  & $e_1e_1=e_3$ & $e_1e_2=e_4$ & $e_2e_1=e_3$ \\
& & $e_2e_2=e_4+e_5$ &  $e_3e_1=e_4+\alpha e_5$ & $e_3e_2=e_5$ & $e_4e_1=e_5$ \\

$\mathbb{L}_{82}$ & $: $  & $e_1e_1=e_2$ &  $e_2e_1=e_3$ & $e_3e_1=e_4$ & $e_4e_1=e_5$\\
\end{longtable}}

After carefully  checking  the dimensions of orbit closures of the more important for us algebras, we have

\begin{longtable}{rcl}

$\dim  \mathcal{O}({\mathfrak V}_{3+2})$&$=$&$24,$ \\
$\dim \mathcal{O}(\mathbb{L}_{28}^{\alpha})=
\dim \mathcal{O}(\mathbb{L}_{47}^{\alpha, \beta})=
\dim \mathcal{O}(\mathbb{L}_{52}^{\alpha, \beta})=
\dim \mathcal{O}(\mathbb{L}_{79}^{\alpha})=\dim \mathcal{O}({\bf B}_{09}^{\alpha})$&$=$&$22,$ \\

$\dim \mathcal{O}(\mathbb{S}_{21}^{\alpha,\beta})$&$=$&$ 21,$\\

$\dim \mathcal{O}(\mathbb{S}_{41}^{\alpha})=
\dim \mathcal{O}({\mathfrak V}_{4+1})=
\dim \mathcal{O}(\mathbb{L}_{82})$&$=$&$20.$

\end{longtable}

 Hence,
$\mathbb{L}_{28}^{\alpha}, \mathbb{L}_{47}^{\alpha,\beta}, \mathbb{L}_{52}^{\alpha}, \mathbb{L}_{79}^{\alpha}$, ${\mathfrak V}_{3+2}$ and ${\bf B}_{09}^{\alpha}$ give $6$ irreducible components. Moreover,  the algebra $\mathbb{L}_{82}$ is a  only one one-generated $5$-dimensional
nilpotent Leibniz algebra. It is known that if $A(*) \rightarrow B$ and $B$ is one-generated, then $A$ is one-generated, then $\mathbb{L}_{82}$ is rigid and its closure gives us an irreducible component.

Below we have listed all the important reasons for necessary non-degenerations.

\begin{longtable}{lcl|l}
\hline
    \multicolumn{4}{c}{Non-degenerations reasons} \\
\hline

${\bf B}_{09}^{\alpha} $&$  \not \rightarrow  $ & ${\mathfrak V}_{4+1}, \mathbb{S}_{41}^{\alpha},
\mathbb{S}_{21}^{\alpha,\beta}$  &
$\mathcal R=\left\{\begin{array}{ll}
\mbox{new basis} \ f_1=e_1, \ f_2=e_2, \ f_3=e_4, \ f_4=e_3, \ f_5=e_5\\
A_1^2 \subseteq  A_4, \ A_1A_5+A_5A_1=0, \\
c_{41}^5=-c_{14}^5, \quad c_{42}^5=-c_{24}^5, \quad
c_{43}^5=-c_{34}^5, \quad  c_{44}^5=0\\
\end{array}\right\}
$\\

\hline

\end{longtable}

The rest of the degenerations is given in the following two tables and it completes the proof of the Theorem.
 {\footnotesize
\begin{longtable}{lcl lll} \hline

${\bf B}_{09}^{\frac{4\alpha (1+t^2)-1}{4\alpha+t^2-1}}$&$\to$&$\mathbb{S}_{22}^{\alpha}$ &
\multicolumn{3}{l}{$E_1^t=\frac{t(1+t^2)}{t^2+\sqrt{1-4\alpha (1+t^2)}}e_1+\frac{t^3+t\sqrt{1-4\alpha (1+t^2)}}{1-4\alpha -t^2}e_2+\frac{t\sqrt{1-4\alpha (1+t^2)}}{1-4\alpha -t^2}e_3-\frac{t}{\sqrt{1-4\alpha (1+t^2)}}e_4$} \\

\multicolumn{6}{l}{$E_2^t=\frac{t^2(1-\sqrt{1-4\alpha (1+t^2)})}{2t^2+2\sqrt{1-4\alpha (1+t^2)}}e_1+\frac{t^2(1-4\alpha +\sqrt{1-4\alpha (1+t^2)})}{2(1-4\alpha-t^2)}e_2+\frac{t^2\sqrt{1-4\alpha (1+t^2)}}{1-4\alpha-t^2}e_3$} \\

\multicolumn{4}{l}{$E_3^t=\frac{t^3\sqrt{1-4\alpha (1+t^2)}}{1-4\alpha-t^2}e_3+\frac{t^3(1+t^2)(1-4\alpha+\sqrt{1-4\alpha (1+t^2))}}{2(1-4\alpha-t^2)(t^2+\sqrt{1-4\alpha (1+t^2)})}e_5$} & $E_4^t=\frac{t^2}{\sqrt{1-4\alpha (1+t^2)}}e_4$ & $E_5^t=\frac{t^4}{1-4\alpha-t^2}e_5$  \\

\hline

\end{longtable} }

 For the rest of degenerations, in  case of  $E_1^t,\dots, E_5^t$ is a {\it parametric basis} for ${\bf A}\to {\bf B},$ it will be denoted by
${\bf A}\xrightarrow{(E_1^t,\dots, E_5^t)} {\bf B}$.

{\small
\begin{longtable}{lcl|lcl} \hline

${\bf B}_{02}$ & $\xrightarrow{ (t^{-1} e_1, t^{-1}e_2, t^{-2}e_3, t^{-1}e_4, t^{-3}e_5)}$ & ${\bf B}_{01}$ &

${\bf B}_{04}$ & $\xrightarrow{ (t^{-1}e_1, t^{-1}e_2,  t^{-2}e_3, t^{-2}e_4, t^{-4}e_5)}$ & ${\bf B}_{02}$
\\  \hline

${\bf B}_{04}$ & $\xrightarrow{ (t^{-2} e_1, e_2, t^{-2}e_3, t^{-1}e_4, t^{-3}e_5)}$ & ${\bf B}_{03}$ &

${\bf B}_{08}$ & $\xrightarrow{ (t^{-1}e_1, e_2, t^{-1}e_3, t^{-1}e_4, t^{-2}e_5)}$ & ${\bf B}_{04}$
\\  \hline

${\bf B}_{08}$ & $\xrightarrow{ (t^{-1}e_1, t^{-1}e_2, t^{-2}e_3, e_4, t^{-2}e_5)}$ & ${\bf B}_{05}$
&

${\bf B}_{08}$ & $\xrightarrow{ (t^{-2}e_1, e_2, t^{-2}e_3, e_4, t^{-2}e_5)}$ & ${\bf B}_{06}$

\\  \hline

${\bf B}_{09}^{\frac{1}{t^2}}$ & $\xrightarrow{ (t^{-1}e_1, t^{-1}e_2, t^{-2}e_3, e_4, t^{-2}e_5)}$ & ${\bf B}_{07}$  &

${\bf B}_{09}^{\frac{1}{t}}$ & $\xrightarrow{ (t^{-1}e_1, e_2, t^{-1}e_3, e_4, t^{-1}e_5)}$ & ${\bf B}_{08}$ \\  \hline

${\bf B}_{11}$ & $\xrightarrow{ (te_1, t^{-1}e_2, e_3, t^2e_4, t^{2}e_5)}$ & ${\bf B}_{10}$  &

${\bf B}_{14}$ & $\xrightarrow{ (t^{-1}e_1, e_2, t^{-1}e_3, t^{-1}e_4, t^{-2}e_5)}$ & ${\bf B}_{11}$  \\

\hline

\multicolumn{6}{c}{${\bf B}_{09}^{\frac{\alpha}{t}}\quad \xrightarrow{ (t^{-1}e_1+t^{-2}e_2+t^{-1}e_3, t^{-1}e_2, t^{-2}e_3, t^{-1}e_4, t^{-3}e_5)} \quad {\bf B}_{12}^{\alpha}$}

\\
\hline

${\bf B}_{09}^{0}$ & $\xrightarrow{ (e_1-e_2, -te_2, -te_3, t^{-1}e_4, -e_5)}$ & ${\bf B}_{13}$
&

${\bf B}_{09}^{-t^2}$ &  $\xrightarrow{ (e_1-e_2, -te_2, -te_3, t^{-1}e_4, -e_5)}$ &  ${\bf B}_{14}$

\\  \hline

\end{longtable}
}

\end{proof}

It is easy to see, that each one-generated binary Leibniz algebra is a Leibniz algebra.
On the other side, there is only one one-generated $n$-dimensional Leibniz algebra.
Let us also give a trivial observation.

\begin{lemma}
 The variety of $n$-dimensional nilpotent binary Leibniz algebras has at least one rigid algebra.
\end{lemma}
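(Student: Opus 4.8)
The plan is to exhibit an explicit algebra and show it is rigid in the variety $\mathbb L(T)$ of $n$-dimensional nilpotent binary Leibniz algebras, where $T$ is the (finite) set of defining polynomial identities for nilpotent binary Leibniz algebras of a fixed dimension $n$. Since the variety is an affine variety, it has finitely many irreducible components, and to produce a rigid algebra it suffices to produce one algebra whose orbit closure is one of these components. The most natural candidate is the unique one-generated $n$-dimensional nilpotent Leibniz algebra, call it $\mathbb L_{1\text{-gen}}$, given by $e_1e_1=e_2,\ e_2e_1=e_3,\ \dots,\ e_{n-1}e_1=e_n$; as noted just before the statement, every one-generated binary Leibniz algebra is a Leibniz algebra and there is exactly one such algebra in each dimension, so this is a distinguished object in the variety.

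First I would recall the key rigidity criterion already used in Theorem D: if $\mathbf A(*)\to\mathbf B$ and $\mathbf B$ is one-generated, then each $\mathbf A(\alpha)$ is one-generated. The underlying point is that the number of generators is an upper semicontinuous (indeed, lower semicontinuous on the algebra side) invariant under degeneration: $\dim(\mathbf A/\mathbf A^2)$ cannot increase under a degeneration, so an algebra degenerating onto a one-generated algebra must itself be one-generated. Applying this to $\mathbf B=\mathbb L_{1\text{-gen}}$: any irreducible component $\mathcal C$ of $\mathbb L(T)$ whose closure contains the orbit of $\mathbb L_{1\text{-gen}}$ must consist generically of one-generated algebras; but there is a unique one-generated $n$-dimensional nilpotent binary Leibniz algebra, namely $\mathbb L_{1\text{-gen}}$ itself, so the set of one-generated algebras in $\mathbb L(T)$ is exactly the single orbit $O(\mathbb L_{1\text{-gen}})$. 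Hence the only irreducible component containing $\mathbb L_{1\text{-gen}}$ is $\overline{O(\mathbb L_{1\text{-gen}})}$, which forces $\overline{O(\mathbb L_{1\text{-gen}})}$ to be an irreducible component and $\mathbb L_{1\text{-gen}}$ to be rigid.

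The steps, in order, are: (i) fix $n$, write down $\mathbb L_{1\text{-gen}}$ explicitly and check it is indeed a nilpotent binary (in fact mono, in fact honest) Leibniz algebra of dimension $n$; (ii) prove the semicontinuity statement that $\dim(\mathbf A/\mathbf A^2)$ does not increase under degeneration (this is standard: $\mathbf A\mapsto\dim(\mathbf A/\mathbf A^2)$ equals $n$ minus the rank of the structure-constant matrix viewed as a map, and rank is lower semicontinuous, so the defect $\dim(\mathbf A/\mathbf A^2)$ is upper semicontinuous); (iii) conclude that the locus of one-generated algebras in $\mathbb L(T)$ is a union of orbits of one-generated algebras, here a single orbit; (iv) deduce that $\overline{O(\mathbb L_{1\text{-gen}})}$ meets no strictly larger irreducible subset of $\mathbb L(T)$, hence is itself a component, so $\mathbb L_{1\text{-gen}}$ is rigid and the variety has at least one rigid algebra.

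The only subtle point — and the step I expect to require the most care — is (ii)/(iii): one must be careful that "one-generated" is being measured correctly and that the argument "there is only one one-generated $n$-dimensional Leibniz algebra, hence only one in $\mathbb L(T)$" genuinely identifies the one-generated locus with a single ${\rm GL}(\mathbb V)$-orbit rather than merely a single isomorphism class that might a priori have several orbit representatives (it does not — an isomorphism class is exactly an orbit). Everything else is formal once the semicontinuity of $\dim(\mathbf A/\mathbf A^2)$ is in hand; in fact this rigidity observation is essentially already made inside the proof of Theorem D for the algebra $\mathbb L_{82}$ in dimension $5$, and the present lemma simply records that the same reasoning works verbatim in every dimension $n$.
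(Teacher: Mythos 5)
Your proposal is correct and follows exactly the argument the paper intends: the unique one-generated $n$-dimensional nilpotent Leibniz algebra lies in the variety (every one-generated binary Leibniz algebra is Leibniz), and the semicontinuity of $\dim(\mathbf{A}/\mathbf{A}^2)$ under degeneration forces its orbit closure to be an irreducible component, just as for $\mathbb{L}_{82}$ in the proof of Theorem D. No gaps; this matches the paper's reasoning.
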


The present lemma raises a question.

\begin{openq}
    Are there non-one-generated rigid algebras in the variety of $n$-dimensional nilpotent binary Leibniz algebras?
\end{openq}

\subsection{The geometric classification of $4$-dimensional nilpotent  algebras  of nil-index 3}
The main result of the present section is the following theorem.

\begin{theoremE}\label{geo4}
The variety of complex $4$-dimensional nilpotent  algebras of nil-index $3$ has
dimension {\it 15 }  and it has
two   irreducible components
\begin{center}
$\mathcal{C}_1=\overline{\{\mathcal{O}(\mathfrak{N}_2(\gamma ))\}},$ \
$\mathcal{C}_2=\overline{\{\mathcal{O}(\mathbb{M}_{12}^{\alpha})\}}.$

\end{center}

In particular, there are no rigid algebras in this variety.

\end{theoremE}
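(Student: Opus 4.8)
The plan is to run the central-extension–driven geometric argument on the list furnished by Theorem C. That theorem exhibits the variety $\mathbb{L}:=\mathbb{L}(T)$ of complex $4$-dimensional nilpotent algebras of nil-index $3$ as the union of the subvariety $\mathcal{N}$ of all $4$-dimensional $2$-step nilpotent algebras together with the orbit closures of $\mathfrak{L}_{01},\mathfrak{L}_{09},\mathfrak{L}_{10},\mathfrak{L}_{11},\mathfrak{L}_{12}$ and of the families $\mathbb{M}_{03}^{\alpha},\dots,\mathbb{M}_{12}^{\alpha}$. First I would compute $\dim\mathfrak{Der}$, hence $\dim\mathcal{O}$, for every algebra on this list. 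A crude bound suffices for the $2$-step part: a $4$-dimensional $2$-step nilpotent algebra $A$ has $\dim A^2\le 2$ and $A\cdot A^2=A^2\cdot A=0$, so it is determined by a point of the Grassmannian of $\dim A^2$-planes in $\mathbb V$ together with a bilinear map $A/A^2\otimes A/A^2\to A^2$, whence $\dim\mathcal{N}\le 12$. On the other hand one checks that $\dim\mathcal{O}(\mathbb{M}_{12}^{\alpha})=14$ for generic $\alpha$, so $\dim\overline{\{\mathcal{O}(\mathbb{M}_{12}^{\alpha})\}}=15$; this will pin $\dim\mathbb{L}=15$ once the degenerations are in place. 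These computations single out $\mathfrak{N}_{2}(\gamma)$ (the generic $2$-step nilpotent algebra) and $\mathbb{M}_{12}^{\alpha}$ as the only candidates for generic members of irreducible components.

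Next I would establish the degenerations. For the $2$-step part it is enough to know that every $4$-dimensional $2$-step nilpotent algebra lies in $\overline{\{\mathcal{O}(\mathfrak{N}_{2}(\gamma))\}}$, i.e. $\mathcal{N}\subseteq\mathcal{C}_1$; this is the (standard, and easily re-done) irreducible decomposition of the $2$-step nilpotent variety via a short list of parametrized bases. For the non-$2$-step part I would exhibit, for each of $\mathfrak{L}_{01},\mathfrak{L}_{09},\mathfrak{L}_{10},\mathfrak{L}_{11},\mathfrak{L}_{12}$ and $\mathbb{M}_{03}^{\alpha},\dots,\mathbb{M}_{11}$, an explicit parametrized basis (with a parametrized index for the one-parameter families) realizing a degeneration from $\mathbb{M}_{12}^{*}$, organized in a table exactly as in the proof of Theorem D; here the inequality $\dim\mathfrak{Der}(\mathbf{A})<\dim\mathfrak{Der}(\mathbf{B})$ is used to discard in advance the pairs that cannot degenerate, so that only finitely many primitive degenerations must actually be written out. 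After this step $\mathbb{L}=\mathcal{C}_1\cup\mathcal{C}_2$.

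It then remains to show that neither of $\mathcal{C}_1,\mathcal{C}_2$ is contained in the other. The inclusion $\mathcal{C}_1\subseteq\mathcal{C}_2$ fails for an easy reason: nilpotency class $\le 2$ (i.e. $A\cdot A^2=A^2\cdot A=0$) is a $\mathrm{GL}(\mathbb{V})$-invariant Zariski-closed condition satisfied by every $2$-step nilpotent algebra, while $\mathbb{M}_{12}^{\alpha}\cdot(\mathbb{M}_{12}^{\alpha})^2=\langle e_4\rangle\ne 0$; hence $\mathfrak{N}_{2}(*)\not\to\mathbb{M}_{12}^{\alpha}$. The reverse non-degeneration $\mathbb{M}_{12}^{*}\not\to\mathfrak{N}_{2}(\gamma)$ is the crux: since every nil-index $3$ algebra — in particular $\mathfrak{N}_{2}(\gamma)$ — is already mono (and binary) Leibniz, there is no ``closedness of the ambient variety'' shortcut, and the derivation-dimension test runs the wrong way. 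I would settle it with Lemma \ref{gmain}: one records that in the ordered basis $e_1>e_2>e_3>e_4$ every $\mathbb{M}_{12}^{\alpha}$ satisfies the flag relations $A_1A_1\subseteq A_3$, $A_1A_2+A_2A_1\subseteq A_3$, $A_2A_2\subseteq A_4$, $A_2A_3+A_3A_2=0$, $A_1A_3+A_3A_1\subseteq A_4$ (plus antisymmetry relations such as $c_{13}^4=-c_{31}^4$), and one assembles from this pattern a $\mathfrak{B}$-stable closed subset $\mathcal{R}\subset\mathbb{L}(T)$ that is realized by every $\mathbb{M}_{12}^{\alpha}$ but contains no structure isomorphic to $\mathfrak{N}_{2}(\gamma)$; Lemma \ref{gmain} then forbids $\mathbb{M}_{12}^{*}\to\mathfrak{N}_{2}(\gamma)$. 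Choosing the precise relations cutting out $\mathcal{R}$ — so that it is simultaneously $\mathfrak{B}$-stable, closed, hit by all $\mathbb{M}_{12}^{\alpha}$, and missed by $\mathfrak{N}_{2}(\gamma)$ in every basis — is the one genuinely delicate step of the argument.

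Finally, $\mathcal{C}_1=\overline{\{\mathcal{O}(\mathfrak{N}_{2}(\gamma))\}}$ and $\mathcal{C}_2=\overline{\{\mathcal{O}(\mathbb{M}_{12}^{\alpha})\}}$ are irreducible, being closures of images of irreducible one-parameter families under the action of $\mathrm{GL}(\mathbb{V})$; their union is all of $\mathbb{L}$ and neither contains the other, so they are exactly the irreducible components of $\mathbb{L}$, and $\dim\mathbb{L}=\max(\dim\mathcal{C}_1,\dim\mathcal{C}_2)=15$. There is no rigid algebra, because both components are closures of honest one-parameter families, so no single $\mathrm{GL}(\mathbb{V})$-orbit is dense in either of them.
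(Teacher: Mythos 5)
Your overall architecture matches the paper's: dimension counts to single out $\mathfrak{N}_2(\gamma)$ and $\mathbb{M}_{12}^{\alpha}$ as the candidate generic families, explicit parametrized bases for the degenerations from $\mathbb{M}_{12}^{*}$ down to the non-$2$-step algebras, closedness of the $2$-step condition for $\mathfrak{N}_2(*)\not\to\mathbb{M}_{12}^{\alpha}$, and Lemma \ref{gmain} with a Borel-stable closed set for $\mathbb{M}_{12}^{*}\not\to\mathfrak{N}_2(\gamma)$ (the paper's choice is $\mathcal{R}=\{A_1^2\subset A_3,\ c_{11}^3=c_{22}^3=0,\ c_{12}^3=-c_{21}^3\}$, close to what you sketch). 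But there is one genuine gap: you assert that every $4$-dimensional $2$-step nilpotent algebra lies in $\overline{\{\mathcal{O}(\mathfrak{N}_2(\gamma))\}}$, i.e.\ that the $2$-step nilpotent part is irreducible with generic member $\mathfrak{N}_2(\gamma)$. This is false: by \cite{kppv} (quoted at the start of the paper's proof) the variety of $4$-dimensional $2$-step nilpotent algebras has \emph{two} irreducible components, generated by $\mathfrak{N}_2(\gamma)$ and by $\mathfrak{N}_3(\alpha)$ (given by $e_1e_1=e_4$, $e_1e_2=\alpha e_4$, $e_2e_1=-\alpha e_4$, $e_2e_2=e_4$, $e_3e_3=e_4$). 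Your argument therefore leaves all of $\overline{\{\mathcal{O}(\mathfrak{N}_3(\alpha))\}}$ unaccounted for; as written it cannot conclude $\mathbb{L}=\mathcal{C}_1\cup\mathcal{C}_2$ and would instead suggest a third component.

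The repair is exactly the degeneration the paper supplies and your list omits: $\mathbb{M}_{12}^{1/t^2}\to\mathfrak{N}_3(\alpha)$ via the parametrized basis $E_1^t=te_1$, $E_2^t=e_2$, $E_3^t=e_3-\alpha t^{-1}e_4$, $E_4^t=e_4$ (one checks $E_1^tE_1^t=E_4^t$, $E_1^tE_2^t=tE_3^t+\alpha E_4^t$, $E_2^tE_1^t=-tE_3^t-\alpha E_4^t$, $E_2^tE_2^t=E_3^tE_3^t=E_4^t$, while $E_1^tE_3^t$ and $E_3^tE_1^t$ tend to $0$), which places $\mathfrak{N}_3(\alpha)$ inside $\mathcal{C}_2$. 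With this degeneration added, $\mathbb{L}=\mathcal{C}_1\cup\mathcal{C}_2$ does hold, and the rest of your argument (mutual non-inclusion of the two closures, dimension $15$, absence of rigid algebras because each orbit in either component has dimension strictly smaller than that of the component) goes through essentially as in the paper.
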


\begin{proof}
Thanks to \cite{kppv},
the variety of $2$-step nilpotent algebras has two irreducible components defined by the following families of algebras

\begin{longtable}{lllllll}
$\mathfrak{N}_2(\gamma )$
& $e_1e_1 = e_3$ &$e_1e_2 = e_4$ & $  e_2e_1 = -\gamma e_3$ & $ e_2e_2 = -e_4$\\

$\mathfrak{N}_3(\alpha    )$ &
$e_1e_1 = e_4$& $e_1e_2 = \alpha e_4$ &
$e_2e_1 = -\alpha e_4$ & $e_2e_2 = e_4$ &$ e_3e_3 = e_4$\\
\end{longtable}

Thanks to \cite{kppv},
we have $\mathfrak{L}_{11} \to \mathfrak{L}_{01}, \mathfrak{L}_{09}, \mathfrak{L}_{10},
\mathfrak{L}_{12}.$
In the following tables, we have listed all
necessary degenerations.

{\small
\begin{longtable}{lclll} \hline





$\mathbb{M}_{06}$&$\to$&$\mathfrak{L}_{11}$ &
$E_1^t=te_1-e_2+\frac{t^2-1}{t}e_3$  &
$E_2^t=-te_2-2e_3$ \\
&&&$E_3^t=t^2e_3-(t^2+t+1)e_4  $&$ E_4^t=t^2e_4$\\



\hline

$\mathbb{M}_{06}$&$\to$&$ \mathbb{M}_{03}^{\alpha}$ &
$E_1^t=e_1+\alpha e_2-\alpha^{2} e_3$  &
$E_2^t=te_2-2\alpha te_3$ \\
&&& $E_3^t=te_3-(\alpha^2-\alpha)te_4  $&$ E_4^t=t e_4$\\

\hline

$\mathbb{M}_{06}$&$\to $&$\mathbb{M}_{04}^{\alpha}$ &
$E_1^t=t e_1+\alpha t e_2-\alpha^{2}te_3$  &
$E_2^t=t^2e_2-2\alpha t^2e_3$ \\
&&&$E_3^t=t^3e_3-(\alpha^2-\alpha)t^3e_4  $&$ E_4^t=t^4 e_4$\\

\hline

$\mathbb{M}_{12}^{\frac{t}{1-t}}$&$\to$&$ \mathbb{M}_{06}$ &
$E_1^t=\frac{i t}{\sqrt{1-t}}e_1+\frac{t}{t-1}e_2-\frac{i t}{\sqrt{1-t}}e_3$  &
$E_2^t=-\frac{2i t}{\sqrt{1-t}}e_1-\frac{2t^2}{t-1}e_2$ \\
&&&$E_3^t=-\frac{2i t^2}{\sqrt{1-t}}e_3-\frac{2t^2}{t-1}e_4 $&$  E_4^t=\frac{4t^3}{t-1}e_4$\\
\hline

\end{longtable}}

 For the rest of degenerations, in  case of  $E_1^t,\dots, E_4^t$ is a {\it parametric basis} for ${\bf A}\to {\bf B},$ it will be denoted by
${\bf A}\xrightarrow{(E_1^t,\dots, E_4^t)} {\bf B}$.

\begin{longtable}{lcl | lcl} \hline

$\mathbb{M}_{12}^{\frac{1}{t^2}} $ &
$ \xrightarrow{( te_1, e_2, e_3-\alpha t^{-1}e_4, e_4)} $ &
$ \mathfrak{N}_3(\alpha)$ &

$\mathbb{M}_{06}$ &
$\xrightarrow{(t^{-1}e_1, t^{-1}e_2, t^{-2}e_3,  t^{-3}e_4)}$
& $\mathbb{M}_{05}$ \\\hline

$\mathbb{M}_{11}$ &
$\xrightarrow{(e_1-e_2+e_3, t^{-1}e_2, t^{-1}e_3,  t^{-2}e_4)}$ & $ \mathbb{M}_{07}$ &

$\mathbb{M}_{11}$&
$\xrightarrow{(te_1-te_2+te_3, t^{-1}e_2, e_3, e_4)}$ & $ \mathbb{M}_{08}$ \\\hline

$\mathbb{M}_{11}$ &
$\xrightarrow{(e_1+(t^{-2}-1)e_2+e_3, t^{-1}e_2, t^{-1}e_3, t^{-2}e_4)}$ & $ \mathbb{M}_{09}$&

$\mathbb{M}_{11}$&
$\xrightarrow{(t^{-1}e_1+\alpha t^{-1}e_2, e_2, t^{-1}e_3, t^{-2}e_4)}$ &$ \mathbb{M}_{10}^{\alpha}$ \\\hline

\multicolumn{6}{c}{$\mathbb{M}_{12}^{-\frac{1}{4t^2}}$ \quad
$\xrightarrow{(t^{-1}e_1+\frac{1}{2}t^{-1}e_2, e_2, t^{-1}e_3-\frac{1}{2}t^{-2}e_4, t^{-2}e_4)}$ \quad $ \mathbb{M}_{11}$}\\

\hline

\end{longtable}

After carefully  checking  the dimensions of orbit closures of the rest of the algebras, we have
\begin{center}
$\dim \mathcal{O}(\mathfrak{N}_2(\gamma))= 12,$
$\dim \mathcal{O}(\mathbb{M}_{12}^{\alpha})=15.$
\end{center}

Non-degenerations reasons are given below.
\begin{longtable}{lcl|l}
\hline
${\mathbb M}_{12}^{\alpha} $&$  \not \rightarrow  $ & $ \mathfrak{N}_2(\gamma)$ &
$\mathcal R=\left\{\begin{array}{ll}
A_1^2  \subset A_3, \
c_{11}^3=c_{22}^3=0, \ c_{12}^3=-c_{21}^3  \\
\end{array}\right\}
$\\

\hline
\end{longtable}
 Hence,
$\mathfrak{N}_2(\gamma)$ and $\mathbb{M}_{12}^{\alpha}$ give  irreducible components.
 \end{proof}

Let us remember that the variety of $n$-dimensional nilpotent algebras of nil-index 2 is irreducible (see, \cite{KW01}).
The last theorem gives an example of
a variety of nilpotent algebras of nil-index $3,$
that has two irreducible components.
The last observation is motivating the following question.

\begin{openq}
Find  the number of irreducible components of
the variety of $n$-dimensional nilpotent algebras of nil-index $3.$
\end{openq}

\subsection{The geometric classification of $4$-dimensional nilpotent mono Leibniz algebras}
The main result of the present section is the following theorem.

\begin{theoremF}
The variety of complex $4$-dimensional nilpotent mono Leibniz algebras has
dimension {\it 15 }  and it has
3  irreducible components:
\begin{center}
$\mathcal{C}_1=\overline{\{\mathcal{O}(\mathfrak{L}_{2})\}},$ \
$\mathcal{C}_2=\overline{\{\mathcal{O}(\mathbb{M}_{12}^{\alpha})\}},$ \
$\mathcal{C}_3=\overline{\{\mathcal{O}(\mathbb{M}_{14}^{\alpha})\}}.$

\end{center}
In particular, there is only one rigid algebra in this variety.
\end{theoremF}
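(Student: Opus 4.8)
The plan is to follow the same strategy that proved Theorems D and E: identify the candidate irreducible components from dimension counts, show that every algebra in the variety degenerates into the orbit closure of one of those candidates, and finally certify that no two candidates lie in each other's closure. Since the $4$-dimensional nilpotent mono Leibniz algebras form the union of the $4$-dimensional nilpotent binary Leibniz algebras (whose $4$-dimensional members are just the Leibniz algebras, classified in \cite{kppv}) together with the finitely-parametrized families $\mathbb{M}_{01},\dots,\mathbb{M}_{22}$ from Theorem B, the ambient variety is a finite union of constructible pieces and standard irreducible-component bookkeeping applies. First I would compute $\mathfrak{Der}$ of each algebra and each family, and compute $\dim\mathcal{O}(-)$ for the algebras with largest orbits, expecting to find $\dim\mathcal{O}(\mathfrak{L}_2)$, $\dim\mathcal{O}(\mathbb{M}_{12}^\alpha)$ and $\dim\mathcal{O}(\mathbb{M}_{14}^\alpha)$ all attaining the generic dimension $15$ (here $\mathfrak{L}_2$ is the rigid $4$-dimensional nilpotent Leibniz algebra whose closure already gives a component of the Leibniz variety by \cite{kppv}).

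Next I would establish all the needed degenerations by exhibiting explicit parametrized bases $E_1^t,\dots,E_4^t$ (and parametrized indices for the one-parameter families), exactly as in the tables of Theorems D and E. The key chains to build are: $\mathbb{M}_{12}^{*}$ degenerating down onto $\mathbb{M}_{06},\mathbb{M}_{05},\mathbb{M}_{11}$ and then the $\mathbb{M}_{11}$-chain onto $\mathbb{M}_{07},\mathbb{M}_{08},\mathbb{M}_{09},\mathbb{M}_{10}^\alpha$ (these reuse the parametrized bases already written in the proof of Theorem E verbatim, since those algebras appear there too); $\mathbb{M}_{14}^{*}$ degenerating onto $\mathbb{M}_{13}^\alpha$ and the remaining $\mathcal{N}_{03}$-extensions; and $\mathfrak{L}_2$ (together with $\mathbb{M}_{12}^\alpha$, via the Leibniz degenerations of \cite{kppv}) covering the rest of the Leibniz algebras of dimension $4$ and the families $\mathbb{M}_{03}^\alpha,\mathbb{M}_{04}^\alpha$, $\mathbb{M}_{15}^\alpha$ through $\mathbb{M}_{22}$, $\mathbb{M}_{01},\mathbb{M}_{02}$. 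For each remaining algebra $\bf B$ I would check $\mathfrak{Der}(\bf A)<\mathfrak{Der}(\bf B)$ for the intended source $\bf A\in\{\mathfrak{L}_2,\mathbb{M}_{12}^*,\mathbb{M}_{14}^*\}$ and write down the basis change.

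Then I would prove the three required non-degenerations $\mathbb{M}_{12}^{\alpha}\not\to\mathbb{M}_{14}^{\beta}$, $\mathbb{M}_{14}^{\alpha}\not\to\mathbb{M}_{12}^{\beta}$, and the incomparabilities with $\mathfrak{L}_2$, using Lemma \ref{gmain} with a suitably chosen Borel-stable closed set $\mathcal{R}$ (of the form "$A_1^2\subset A_3$ together with linear relations among the level-$3$ structure constants", mirroring the $\mathcal R$ used in the proof of Theorem E); the incomparability involving $\mathfrak{L}_2$ can alternatively be read off from the Leibniz-variety component structure of \cite{kppv} since $\mathfrak{L}_2$ is rigid there and none of the $\mathbb{M}_i$ are Leibniz. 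Combined with the dimension count, these three non-degenerations show the three orbit closures are pairwise incomparable maximal irreducible closed subsets, and since every algebra of the variety degenerates into one of them, they are exactly the irreducible components; as $\mathfrak{L}_2$ is rigid this yields "only one rigid algebra".

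\textbf{Main obstacle.} The hard part will be the bookkeeping for the families: verifying that the one-parameter families $\mathbb{M}_{15}^\alpha,\dots,\mathbb{M}_{18}^\alpha$ (which carry the awkward $\sqrt{1-4\alpha}$ in their structure constants) genuinely lie in $\overline{\mathcal{O}(\mathbb{M}_{12}^\ast)}$ — i.e. constructing a parametrized basis \emph{and} a parametrized index $f(t)$ simultaneously, so that in the limit the radical expressions specialize correctly — and then confirming via derivation dimensions and Lemma \ref{gmain} that $\mathbb{M}_{14}^\ast$ is not swallowed by $\overline{\mathcal{O}(\mathbb{M}_{12}^\ast)}$ despite both having generic orbit dimension $15$. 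That separation is the crux, because it is what forces a third component beyond the two appearing in Theorem E.
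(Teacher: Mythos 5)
Your skeleton (dimension counts to isolate the three candidates, explicit parametrized bases for the degenerations, closed invariant sets for the non-degenerations) matches the paper's, but the routing you commit to fails exactly at the step you single out as the crux. You propose to absorb $\mathbb{M}_{01},\mathbb{M}_{02}$ and $\mathbb{M}_{15}^{\alpha},\dots,\mathbb{M}_{22}$ into $\overline{\{\mathcal{O}(\mathfrak{L}_{2})\}}\cup\overline{\{\mathcal{O}(\mathbb{M}_{12}^{*})\}}$, and you call proving $\mathbb{M}_{15}^{\alpha},\dots,\mathbb{M}_{18}^{\alpha}\in\overline{\{\mathcal{O}(\mathbb{M}_{12}^{*})\}}$ the main obstacle. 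This is not an obstacle to be overcome; it is false. First, $\overline{\mathcal{O}(\mathfrak{L}_{2})}$ lies inside the Zariski-closed locus of Leibniz structures and has dimension only $12$, so it contains none of the non-Leibniz algebras $\mathbb{M}_{i}$. Second, for every $\alpha$ the symmetrized product $x\circ y=xy+yx$ of $\mathbb{M}_{12}^{\alpha}$ has one-dimensional image $\langle e_4\rangle$, whereas for $\mathbb{M}_{01},\mathbb{M}_{02}$ and $\mathbb{M}_{13}^{\alpha},\dots,\mathbb{M}_{22}$ that image is two-dimensional; since $\{\mu:\dim\mu^{\mathrm{sym}}(\mathbb{V}\otimes\mathbb{V})\le 1\}$ is ${\rm GL}(\mathbb{V})$-stable and Zariski-closed, none of these algebras lies in $\overline{\{\mathcal{O}(\mathbb{M}_{12}^{*})\}}$. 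The paper instead absorbs all of $\mathbb{M}_{01},\mathbb{M}_{02},\mathbb{M}_{13}^{\alpha},\dots,\mathbb{M}_{22}$ together with $\mathfrak{L}_{5}$ into $\overline{\{\mathcal{O}(\mathbb{M}_{14}^{\alpha})\}}$ (e.g. $\mathbb{M}_{14}^{\alpha}\to\mathbb{M}_{16}^{\alpha}\to\mathbb{M}_{15}^{\alpha}$, $\mathbb{M}_{14}^{1/t^{2}}\to\mathbb{M}_{20}$, $\mathbb{M}_{14}^{0}\to\mathbb{M}_{02}\to\mathbb{M}_{01}$, $\mathbb{M}_{14}^{t+1}\to\mathfrak{L}_{5}$); these degenerations are the actual content of the third component, and your proposal (which is also internally inconsistent, since one clause assigns the $\mathcal{N}_{03}$-extensions to $\mathbb{M}_{14}^{*}$ and the next assigns them to $\mathbb{M}_{12}^{*}$) leaves them uncovered.

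A second gap concerns $\mathfrak{L}_{2}$: arguing that it is rigid because it is rigid in the Leibniz subvariety and "none of the $\mathbb{M}_i$ are Leibniz" does not work, since degenerations routinely go from non-Leibniz to Leibniz algebras — in this very variety $\mathbb{M}_{12}^{\alpha}\to\mathfrak{L}_{11}$ and $\mathbb{M}_{14}^{t+1}\to\mathfrak{L}_{5}$. The paper's argument is that $\mathfrak{L}_{2}$ is the unique one-generated algebra in the variety and a degeneration onto a one-generated algebra must come from a one-generated algebra, so nothing properly degenerates to it; that is why its $12$-dimensional orbit closure is a component. Finally, most of the pairwise-incomparability work you plan is unnecessary: once $\overline{\{\mathcal{O}(\mathbb{M}_{12}^{\alpha})\}}$ and $\overline{\{\mathcal{O}(\mathbb{M}_{14}^{\alpha})\}}$ are known to be distinct irreducible closed subsets of dimension $15=\dim\mathbb{L}(T)$ (distinctness already follows from the $x\circ y$ invariant above), they are automatically irreducible components.
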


\begin{proof}
Thanks to \cite{HS86} the variety of $4$-dimensional  nilpotent  Leibniz algebras has only $4$  irreducible components defined by

\begin{longtable}{llllllll}
$\mathfrak{N}_3(\alpha)$ & $:$&
$e_1e_1=e_4$ & $e_1e_2=\alpha e_4$ &$e_2e_1=-\alpha e_4$ & $e_2e_2=e_4$ & $e_3e_3=e_4$\\

$\mathfrak{L}_2$ &$ :$ &
$e_1e_1 = e_2$ & $e_2e_1 =e_3$ & $e_3e_1 = e_4$ \\

$\mathfrak{L}_5$ &$:$&
$e_1e_1=e_3$ &  $e_2e_1=e_3$ & $e_2e_2=e_4$ & $e_3 e_1 =e_4$ \\

$\mathfrak{L}_{11}$ &$:$& $e_1e_1=e_4$ &  $e_1 e_2 =-e_3$ & $e_1e_3=-e_4$ & $e_2 e_1 =e_3$ & $e_2e_2=e_4$  &  $e_3e_1=e_4$ \\
\end{longtable}

In the Theorem E,  variety of complex $4$-dimensional nilpotent  algebras of nil-index $3$ has one irreducible component

\begin{longtable}{llllllllllllll}
$\mathbb{M}_{12}^\alpha$ & $: $ & $e_1e_1=\alpha e_4$ & $e_1e_2=e_3$ & $e_1e_3=e_4$ & $e_2e_1=-e_3$ \\
& & $e_2e_2=e_4$& $e_3e_1=-e_4$ & $e_3e_3=e_4$ \\
\end{longtable}

From Theorem E, we obtain that the algebra $\mathbb{M}_{12}^\alpha$ is degeneration to the algebra $\mathfrak{L}_{11}$.

After carefully  checking  the dimensions of orbit closures of the more important for us algebras, we have

\begin{center}
$\dim  \mathcal{O}(\mathfrak{L}_2)=12,$ \,
$\dim \mathcal{O}(\mathbb{M}_{12}^{\alpha})=
\dim \mathcal{O}(\mathbb{M}_{14}^{\alpha})=15.$
\end{center}

Hence,
$\mathbb{M}_{12}^{\alpha}$ and $\mathbb{M}_{14}^{\alpha}$ give $2$ irreducible components. Moreover, since the algebra $\mathfrak{L}_2$ is only one one-generated $4$-dimensional nilpotent Leibniz algebra, then $\mathfrak{L}_2$ is rigid and its closure gives us an irreducible component.

The rest of the degenerations is given below in the following tables and it completes the proof of the Theorem:

{\small
\begin{longtable}{lcl ll} \hline

$\mathbb{M}_{14}^{t+1}$&$\to$&$\mathfrak{L}_5$ &  $E_1^t=-2t^2e_1+(t^2-t)e_2+t^2e_3$ &
$E_2^t=(t^2-t^3)e_2-(2t^3+t^2)e_3$  \\
&&& $E_3^t=2t^4e_3-(t^4-t^3)e_4$ & $E_4^t=2(t^6-t^5)e_4$\\

 \hline

$\mathbb{M}_{14}^{\alpha}$ & $\to$ & $\mathbb{M}_{16}^{\alpha}$ &
\multicolumn{2}{l}{[let us fix $x=t(\sqrt{1-4\alpha}-t)$]}\\

\multicolumn{5}{l}{$E_1^t=x e_1-\frac{2x}{1-\sqrt{1-4(\alpha+x)}}e_2-\frac{x(2\alpha(\alpha+x)+x(1+\sqrt{1-4(\alpha+x)}))}{2(\alpha +x)^2}e_3$} \\

\multicolumn{5}{l}{$E_2^t=-\frac{2x\sqrt{1-4(\alpha+x)}}{1-\sqrt{1-4(\alpha+x)}}e_2-\frac{x}{2}(1+\sqrt{1-4(\alpha+x)})e_3$} \\

\multicolumn{5}{l}{$E_3^t=\frac{x^3(2(\alpha+x)-1-\sqrt{1-4(\alpha +x)})}{2(\alpha+x)^2}e_3-\frac{x^3(2(\alpha+x)-1-\sqrt{1-4(\alpha +x)})(1+\sqrt{1-4(\alpha +x)})}{4(\alpha+x)^3}e_4$} \\

\multicolumn{5}{l}{$E_4^t=\frac{x^4(1+\sqrt{1-4(\alpha +x)-2(\alpha+x)})(1+\sqrt{1-4(\alpha+x)})}{4(\alpha+x)^3}e_4$} \\

\hline

$\mathbb{M}_{14}^{\alpha\neq 0}$ & $\to$ & $\mathbb{M}_{18}^{\alpha\neq 0}$ &
\multicolumn{2}{l}{[let us fix $x=-t(\sqrt{1-4\alpha}+t)$]}\\

\multicolumn{5}{l}{$E_1^t=x e_1-\frac{2x}{1-\sqrt{1-4(\alpha+x)}}e_2-\frac{x(2\alpha(\alpha+x)+x(1+\sqrt{1-4(\alpha+x)}))}{2(\alpha +x)^2}e_3$} \\

\multicolumn{5}{l}{$E_2^t=-\frac{2x\sqrt{1-4(\alpha+x)}}{1-\sqrt{1-4(\alpha+x)}}e_2-\frac{x}{2}(1+\sqrt{1-4(\alpha+x)})e_3$} \\

\multicolumn{5}{l}{$E_3^t=\frac{x^3(2(\alpha+x)-1-\sqrt{1-4(\alpha +x)})}{2(\alpha+x)^2}e_3-\frac{x^3(2(\alpha+x)-1-\sqrt{1-4(\alpha +x)})(1+\sqrt{1-4(\alpha +x)})}{4(\alpha+x)^3}e_4$} \\

\multicolumn{5}{l}{$E_4^t=\frac{x^4(1+\sqrt{1-4(\alpha +x)-2(\alpha+x)})(1+\sqrt{1-4(\alpha+x)})}{4(\alpha+x)^3}e_4$} \\

\hline
$\mathbb{M}_{20}$&$\to$&$\mathbb{M}_{22}$ &
$E_1^t=t\sqrt{1-t}e_1+i (t^2-t)e_2-i (t^3-t)e_3$  &
$E_2^t=i t\sqrt{1-t}e_1+te_2+(t^2-t)e_3$ \\
&&& $E_3^t=t^3e_3+i t^3\sqrt{1-t}e_4$ &$ E_4^t=t^4\sqrt{1-t}e_4$\\

\hline
\end{longtable} }
and

\begin{longtable}{lcl|lcl} \hline

$\mathbb{M}_{02}$ & $ \xrightarrow{({t}^{-1}e_1,
t^{-2}e_2, e_3, t^{-2}e_4)}$ & $ \mathbb{M}_{01}$ &
$\mathbb{M}_{14}^{0}$  &
$ \xrightarrow{( e_1, e_3, te_2, te_4)}$ & $\mathbb{M}_{02}$ \\

\hline

$\mathbb{M}_{14}^{\alpha}$ &  $\xrightarrow{( t^{-1}e_1, t^{-1}e_2, t^{-2}e_3, t^{-3}e_4)}$ & $\mathbb{M}_{13}^{\alpha}$ &

$\mathbb{M}_{16}^{\alpha}$ & $\xrightarrow{( t^{-}e_1,t^{-1}e_2, t^{-2}e_3, t^{-3}e_4)}$ &  $\mathbb{M}_{15}^{\alpha}$\\

\hline

$\mathbb{M}_{18}^{\alpha\neq0}$ &
$\xrightarrow{( t^{-1}e_1, t^{-1}e_2, t^{-2}e_3,  t^{-3}e_4)}$ & $\mathbb{M}_{17}^{\alpha\neq0}$ &

$\mathbb{M}_{20}$ &
$\xrightarrow{(  t^{-1}e_1,  t^{-1}e_2,  t^{-2}e_3,  t^{-3}e_4)}$ & $\mathbb{M}_{19}$\\

\hline

$\mathbb{M}_{14}^{\frac{1}{t^2}}$ &
$\xrightarrow{( te_2, e_1, e_3, te_4)}$ & $\mathbb{M}_{20}$ &

$\mathbb{M}_{22}$ &
$\xrightarrow{( t^{-1}e_1, t^{-1}e_2, t^{-2}e_3,  t^{-3}e_4)}$ & $\mathbb{M}_{21}$\\

\hline

\end{longtable}

\end{proof}

It is easy to see, that each one-generated mono Leibniz algebra is a Leibniz algebra.
On the other side, there is only one one-generated $n$-dimensional Leibniz algebra.
Let us also give a trivial observation.

\begin{lemma}
 The variety of $n$-dimensional nilpotent mono Leibniz algebras has at least one rigid algebra.
\end{lemma}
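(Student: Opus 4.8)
The plan is to exhibit a single rigid algebra explicitly, namely the unique one-generated $n$-dimensional nilpotent mono Leibniz algebra, and to establish its rigidity by showing that its orbit is Zariski-open. Write $\mathbb{L}(T)$ for the variety of $n$-dimensional nilpotent mono Leibniz structures on a fixed space $\mathbb V$. By the two observations preceding the statement, every one-generated mono Leibniz algebra is already a Leibniz algebra, and there is exactly one one-generated $n$-dimensional nilpotent Leibniz algebra; denote it $\mathfrak{l}_n$ (concretely $e_1e_1=e_2,\ e_2e_1=e_3,\ \dots,\ e_{n-1}e_1=e_n$, all other products zero). Hence, up to isomorphism, $\mathfrak{l}_n$ is the only one-generated algebra occurring in $\mathbb{L}(T)$, so the set of one-generated structures in $\mathbb{L}(T)$ is precisely the single orbit $O(\mathfrak{l}_n)$. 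By the definition of rigidity recalled earlier, it then suffices to prove that $O(\mathfrak{l}_n)$ is open in $\mathbb{L}(T)$.

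To do this I would use the generator-counting function. For $\mu\in\mathbb{L}(T)$ the number of generators of the associated algebra $\mathbf A$ equals $n-\dim\bigl(\mu(\mathbb V\otimes\mathbb V)\bigr)$, that is, $n$ minus the rank of the $n\times n^2$ matrix $(c_{ij}^k)$ of structure constants. Since the rank of a matrix whose entries are the coordinate functions $c_{ij}^k$ is lower semicontinuous on $\mathbb{C}^{n^3}$ (the locus where a fixed minor is nonzero is open), the condition $\dim\bigl(\mu(\mathbb V\otimes\mathbb V)\bigr)\ge n-1$ carves out a Zariski-open subset of $\mathbb{L}(T)$. Every nonzero nilpotent algebra satisfies $\dim\mathbf A^2<\dim\mathbf A$ and hence has at least one generator, so this open locus is exactly the set of structures with precisely one generator, which we have identified with $O(\mathfrak{l}_n)$. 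The only edge case to keep in mind is the abelian structure $\mu=0$, which has $n$ generators and therefore lies outside this open locus, so it does not interfere with the identification. Thus $O(\mathfrak{l}_n)$ is open, and $\mathfrak{l}_n$ is rigid.

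Equivalently, one can argue through orbit closures using the fact recalled in the proof of Theorem F, that if $\mathbf A(*)\to \mathbf B$ and $\mathbf B$ is one-generated then $\mathbf A$ is one-generated: were $\mathfrak{l}_n$ not rigid, its irreducible closure $\overline{O(\mathfrak{l}_n)}$ would sit properly inside some component $\mathcal{C}=\overline{\{O(\mu(\alpha))\}_{\alpha\in I}}$, and $\mathbf A(*)\to\mathfrak{l}_n$ would force every $\mathbf A(\alpha)$ to be one-generated, hence $\cong\mathfrak{l}_n$ by uniqueness, giving $\mathcal{C}=\overline{O(\mathfrak{l}_n)}$, a contradiction. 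The argument is essentially soft, so the only points that genuinely require care are the two inputs I am permitted to assume, that a one-generated mono Leibniz algebra must be Leibniz and that the one-generated $n$-dimensional nilpotent Leibniz algebra is unique; granting these, the semicontinuity of the generator count is the decisive and only nontrivial step, and it immediately yields that $\mathbb{L}(T)$ contains at least one rigid algebra.
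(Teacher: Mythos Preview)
Your proposal is correct and follows the same idea as the paper: the unique one-generated $n$-dimensional nilpotent (mono) Leibniz algebra is rigid. The paper itself states the lemma without proof, treating it as a ``trivial observation'' following the two preceding remarks; your semicontinuity-of-rank argument and the alternative orbit-closure argument simply flesh out why that observation actually yields rigidity.
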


The present lemma raises a question.

\begin{openq}
    Are there non-one-generated rigid algebras in the variety of $n$-dimensional nilpotent mono Leibniz algebras?
\end{openq}






\end{document}